\documentclass[twoside,11pt]{article}
\usepackage{amssymb,amsmath,amsthm}           
\usepackage{hyperref}
\usepackage{eucal}
\usepackage{a4wide}
\usepackage[svgnames]{xcolor}
\usepackage{mathptmx} 
\usepackage{graphicx}
\usepackage{tikz-cd}
 \usepackage{subfigure}
 \usepackage{fancyhdr}

\title{Integrability and dynamics of the $n$-dimensional symmetric Veselova top\footnote{This research was made possible by a Newton Advanced Fellowship from the Royal Society, no.~NA140017}}

\author{Francesco Fass\`o, Luis C.~Garc\'ia-Naranjo \& James Montaldi}

\date{}

\numberwithin{equation}{section}
\numberwithin{table}{section}
\numberwithin{figure}{section}
\renewcommand{\labelenumi}{(\roman{enumi})}

\hypersetup{colorlinks=true,linkcolor=violet,citecolor=purple}

\newtheorem{theorem}{Theorem}[section]
\newtheorem{lemma}[theorem]{Lemma}
\newtheorem{proposition}[theorem]{Proposition}
\newtheorem{corollary}[theorem]{Corollary}
{\theoremstyle{definition}
\newtheorem{definition}[theorem]{Definition}
\newtheorem{remark}[theorem]{Remark}

}

\newcommand{\defn}[1]{{\bfseries\itshape{#1}}}

\pagestyle{fancy} 
\def\headcolour{\color{Grey}}
\setlength{\headheight}{14pt}
 \lhead[\headcolour\textsc{F.~Fass\`o, L.C.~Garc\'ia-Naranjo \& J.~Montaldi}]{\headcolour\textsc{Symmetric Veselova top in $n$ dimensions}}
 \rhead{\headcolour\thepage}
 \cfoot{}
 \rfoot{\it\color{Grey} \today}


\def\restr#1{\,\vrule height1.2ex width.4pt
  depth0.8ex\lower0.4ex\hbox{\scriptsize $\,#1$}}

\newcommand{\R}{\mathbb{R}}   
\newcommand{\Z}{\mathbb{Z}}   

\newcommand{\I}{\mathbb{I}}
\newcommand{\J}{\mathbb{J}}
\newcommand{\g}{\mathfrak{g}}
\newcommand{\so}{\mathfrak{so}}
\newcommand{\dd}{\mathfrak{d}}
\newcommand{\h}{\mathfrak{h}}
\newcommand{\OO}{\mathrm{O}}
\newcommand{\SO}{\mathrm{SO}}
\newcommand{\Ad}{\mathrm{Ad}}
\newcommand{\Ss}{\mathrm{S}}
\newcommand{\cB}{\mathcal{B}}
\newcommand{\xx}{\mathbf{x}}
\renewcommand{\d}{\mathsf{d}}
\newcommand{\cR}{\mathcal{R}}

\newcommand{\tr}{\mathop\mathrm{tr}\nolimits}
\newcommand{\diag}{\mathop\mathrm{diag}\nolimits}
\newcommand{\inertia}{\mathop\mathrm{in}\nolimits}

\begin{document}

\maketitle

\begin{abstract}
We consider the the $n$-dimensional generalisation of the nonholonomic Veselova problem.
We derive the reduced equations of motion in terms of the mass tensor of the body 
and determine some general properties of the dynamics. In particular we give a closed formula for the invariant measure, we
 indicate the existence of steady 
rotation solutions, and obtain some results on their stability.

We then focus our attention on bodies whose mass tensor has a specific type of symmetry. We  show that  the phase space is foliated by invariant tori that carry quasi-periodic dynamics in the natural time variable. Our results
enlarge the known  cases of integrability of the multi-dimensional Veselova top. Moreover, they show that  in some previously known
instances of integrability, the flow is quasi-periodic without the need of a time reparametrisation. 
\end{abstract}

{\small
\tableofcontents
}

\section{Introduction}

 The $n$-dimensional generalisation of some classical nonholonomic systems 
considered recently by Fedorov and Jovanovi\'c \cite{FedJov, FedJov2, Jovan, JovaRubber} provides a remarkable family of examples of
nonholonomic systems whose dynamics, after a time reparametrisation, is quasi-periodic in large dimensional invariant tori.     
A crucial  feature of these examples 
is that they possess an invariant measure and admit a Hamiltonization  by Chaplygin's multiplier method  \cite{ChapRedMult}. 
 After a time reparametrisation,  the reduced equations of motion of these examples
 become Hamiltonian and, moreover,  turn out to be
 related to classical integrable Hamiltonian systems. In this manner the authors establish the integrability of the systems and prove that
 the flow on the invariant, large dimensional tori is quasiperiodic in the new time.
 The Hamiltonization of these systems is hence central to their approach and  is truly remarkable considering that  Chaplygin's  method  is only guaranteed to work for systems with $2$ degrees of freedom. This Hamiltonization relies on the very particular type of inertia operators considered by the authors that,  as we explain below, are generally unphysical. 

In this paper we  analyse the dynamics of the $n$-dimensional  Veselova top (described below) 
and treated before by Fedorov and Jovanovi\'c \cite{FedJov,FedJov2}.  
 A fundamental difference in our study of the problem with respect to these references
 is that we assume that the inertia tensor is  {\em physical} (see below).
 
 Our main contribution is to show that under certain symmetry assumptions on the mass distribution of the body,
  the dynamics takes place on invariant tori that carry  quasi-periodic flow {\em without the need of a time reparametrisation}.
We prove this   by performing a detailed symmetry analysis of the problem, 
and in particular by performing a reduction to a system that is manifestly integrable and in fact periodic.  The quasi-periodicity of the flow of the unreduced system (in the
natural time variable) then follows from a theorem of Field  \cite{Field80}. This type of analysis to 
establish integrability of nonholonomic systems had been previously followed in e.g. \cite{Hermans, FassoGiacobbe, FS2016}.

Our approach allows us to 
recover all cases of integrability of the multi-dimensional Veselova top determined in \cite{FedJov,FedJov2} that correspond to  physical inertia tensors and we show that these 
correspond to {\em axisymmetric} bodies. We 
 also determine new cases of integrability  that we term {\em cylindrical}
 bodies. 
For this type of body,  the system does not seem to allow a Chaplygin Hamiltonization (Remark~\ref{rmk:hamiltonization}), and the generic motion   
takes place in 4-dimensional invariant tori.

This paper also considers some properties of the motion of the general $n$-dimensional Veselova top  (without any symmetry assumptions on the body).
 We prove the existence of 
 steady rotation solutions that are periodic solutions with constant angular velocity, and determine some of their properties.  In particular, we prove that some of these solutions are stable. 
 The existence of these solutions appears not to have been observed before, not even in the 3D case. 
 We also give the explicit form of 
  the reduced equations of motion   in terms of the mass tensor of the 
 body, and  a closed formula for the known invariant measure.

\paragraph{The $n$-dimensional Veselova top.}
The 3D Veselova top, introduced by Veselova in her thesis \cite{Veselova},  is a rigid body rotating under its own inertia (like Euler's rigid body) and subject to a nonholonomic constraint 
which forces the projection of the angular velocity to  a distinguished axis fixed in space, to vanish at all time (see also \cite{Veselov-Veselova-1988}).

This paper deals with a multi-dimensional generalization of this model that was introduced by Fedorov and Kozlov \cite{FedKoz}. They  consider the motion of
an $n$-dimensional rigid body subject to the nonholonomic constraint that requires the angular velocity matrix $\Omega \in \so(n)$ to have rank two and to define a rotation in a plane containing a fixed axis in space. 

The Veselova top and its $n$-dimensional generalisation are examples of {\em LR systems} which form a remarkable class of nonholonomic systems possessing an invariant measure \cite{Veselov-Veselova-1988}.

\paragraph{The inertia tensor.}
Throughout this paper, we work with the commonly accepted $n$-dimensional generalisation of  rigid body dynamics (see e.g. \cite{Ratiu80}), 
where the  inertia tensor is a linear operator $\I:\so(n)\to \so(n)$ of the form
\begin{equation}
\label{eq:Phys-Inertia-Intro}
\I(\Omega) = \J \Omega + \Omega \J,
\end{equation}
where $\J$ is the \defn{mass tensor}. 
$\J$ is a constant  $n\times n$ matrix that depends on the mass distribution of the body which, by an appropriate choice of a body frame,
 may be assumed to be   diagonal  with positive entries (see Section\,\ref{sec:rigid body} for details).  We say that a linear operator  $\I:\so(n)\to \so(n)$ is  a \defn{physical inertia tensor}
if and only if it may be written as above for a certain mass tensor $\J$.

\subsection*{Summary of results}

The configuration space of the $n$-dimensional  Veselova top problem is the orthogonal group\footnote{Readers may be more familiar with using the connected component $\SO(n)$ as the configuration space, however allowing both components simplifies our exposition in Section~\ref{sec:cylindrical}; moreover Arnold \cite[p.133]{MMCM} suggests that $\OO(n)$ is the `correct' configuration space.} 
 $\OO(n)$. The phase space for the  $n$-dimensional  Veselova top is a subbundle $D\subset T\OO(n)$ of rank $n-1$ 
determined by the nonholonomic constraints.

Our first contribution is to show that the  system admits steady rotation solutions 
on $D$. These are periodic solutions where the body 
steadily rotates in a principal plane of the body.  Because of the constraint, the orientation of the body
along these motions  is such that 
the distinguished axis is contained in the plane of rotation. Along these solutions, the nonholonomic constraint forces vanish.

Next we perform the reduction   of the system by the symmetry group $G_L=\OO(n-1)$ that corresponds
to invariance of the system under rotations and reflections of the space frame that fix the distinguished axis  (the subscript $L$ indicates
that this action is by left multiplication in the configuration space $\OO(n)$). As 
shown 
 in  \cite{FedJov,FedJov2},
this is a generalised Chaplygin reduction and the \defn{first reduced space} $D/G_L$ 
is isomorphic to the cotangent bundle $T^*\Ss^{n-1}$. We give
 the reduced equations of motion and the explicit expression for the invariant measure in terms of the mass tensor $\J$.
Moreover, with the reduced system at hand, we are able to show that the steady rotation solutions
 correspond to the singularities of an energy-momentum map
and we show stability of those occurring on {\em extremal } planes. In particular this proves that  in 3D, the steady rotations 
about the smallest and largest axes of inertia of the body are stable.

We then focus on the  study of symmetric $n$-dimensional
rigid bodies whose mass tensor has just 2 distinct eigenvalues:
$$\J = \diag[J_1,\dots,J_1,J_2,\dots,J_2],$$
with $J_1\neq J_2$. We assume that the multiplicities of $J_1$ and $J_2$ are  $r$ and  $r'=n-r$, with  $1\leq r\leq r'$. 
Under this hypothesis,
the mass tensor $\J$ has symmetry $G_R:=\OO(r)\times\OO(r')\subset\OO(n)$: if
$$h=\left[\begin{matrix}
h_{11}&0\cr 0&h_{22}
\end{matrix}\right],$$
with $h_{11}\in\OO(r)$ and $h_{22}\in\OO(r')$, then $h\J h^T=\J$. The subindex $R$ indicates that $G_R$ acts by right multiplication
on the configuration space $\OO(n)$. Since this action commutes with the left multiplication action of $G_L=\OO(n-1)$, it
 passes down to a proper action on $T^*\Ss^{n-1}$ that turns out not to be free.
 
The  \defn{second reduced space} $\cR=T^*\Ss^{n-1} /G_R$ is not a smooth manifold, but a stratified space where,  as we shall show, the dynamics is periodic.
We apply a  theorem of  Field \cite{Field80} to conclude that the reconstructed motion, both on $D$ and on $T^*\Ss^{n-1}$,
 is quasi-periodic on invariant tori. We emphasise
that this proves quasi-periodicity of the flow without a time reparametrisation.

In order to estimate  the dimension of the invariant tori on $D$ and on $T^*\Ss^{n-1}$ it is necessary to determine the details of the
stratification. For this,  we distinguish two cases depending on the multiplicity $r$
of the eingenvalue $J_1$ as follows.
\begin{enumerate}
\item[$r=1$.] The mass tensor is 
 $$\J = \diag[J_1,J_2,\dots,J_2],$$
 that corresponds to an   \defn{axisymmetric body}. In this case the orbit space $\cR$ is isomorphic to a singular semi-algebraic
 subspace in $\R^3$. We show that the dynamics on $D$ is essentially that of the 3D axisymmetric
 Veselova top (see Theorem~\ref{thm:nD to 3D} for a precise statement), 
 and is generically quasi-periodic on tori of dimension $2$ both in $D$ and on $T^*\Ss^{n-1}$. We also give a physical
 description of the dynamics.
 \item[$r>1$.] A body  with this type of symmetry is called  a \defn{cylindrical body} and can only exist in dimension 4 or higher.
  For these bodies the 
  orbit space  $\cR$ is isomorphic to a  singular semi-algebraic
 subspace in $\R^4$.
  For $n>4$ we prove that the  system evolves as a 4D cylindrical Veselova top (a precise statement is given in Theorem~\ref{thm:nD to 4D}) and
  we prove that the generic motion is quasi-periodic on tori of dimension 4 on $D$ and of dimension 3 on $T^*\Ss^{n-1}$.
  We also give an argument to prove that the generic motions do not take place in lower dimensional invariant tori.
\end{enumerate}

\paragraph{Previous work.} Fedorov and Jovanovi\'c  \cite{FedJov,FedJov2} show that  the $G_L$-reduction of the $n$-dimensional 
Veselova top to $T^*\Ss^{n-1}$  is Hamiltonizable and integrable under the assumption that the action of the
 inertia operator on  rank 2 matrices\footnote{For $a, b\in \R^n$ we denote $a\wedge b =ab^T-ba^T \in \so(n)$.} $a\wedge b\in \so(n)$
satisfies
\begin{equation}
\label{eq:inertia-Fed-Jov}
\I (a\wedge b) = (Aa)\wedge (Ab),
\end{equation}
for a diagonal matrix $A=\diag[A_1,\dots, A_n]$. In particular, this implies that the  space of rank 2 matrices 
in $\so(n)$ is invariant 
under $\I$. This condition is vacuous in dimension $3$ since every non-zero matrix in $\so(3)$ has rank 2. 
  However, for $n\geq 4$ it is a very restrictive assumption,
as we discuss in Appendix~\ref{A:Inertia}. It turns out that an inertia tensor satisfying \eqref{eq:inertia-Fed-Jov} is
physical as defined by Equation~\eqref{eq:Phys-Inertia-Intro} if and only if the body is axisymmetric. In this case $A=\diag[A_1,A_2,\dots, A_2]$,
and the mass tensor $\J$ is determined by the relations $A_2^2=2J_2$ and $A_1A_2=J_1+J_2$.

As mentioned above, their approach only allows them to conclude quasi-periodicity of the flow after a time reparametrisation.

\subsection*{Structure of the paper} 
In Section\,\ref{sec:Veselova} we begin by recalling the kinematics of the rigid body in arbitrary dimensions, governed by a mass tensor $\J$.
We then recall from \cite{FedKoz} the $n$-dimensional generalisation of the Veselova top and we discuss its symmetries, which depend on $\J$. We also describe the steady rotations which play an important role later. 
In Section~\ref{sec:1st reduction} we perform the reduction of the system by $G_L= \OO(n-1)$. We obtain the reduced equations
of motion in terms of $\J$ and determine general properties of the motion. Our analysis of axisymmetric and cylindrical bodies is respectively contained in Sections~\ref{sec:axisymmetric}  and \ref{sec:cylindrical}. In Section~\ref{S:Question-Hamiltonisation} we present some conclusions and
open questions.

The paper ends with  two appendices. The first recalls a well-known result about group actions, the reconstruction theorem of Field which we use in the proofs of quasiperiodicity, as well as some details on isotropy subgroups. The second  appendix considers in detail the physical implications of the inertia tensor hypothesis \eqref{eq:inertia-Fed-Jov} that Fedorov and Jovanovi\'c make 
in \cite{FedJov, FedJov2}.

\section{The Veselova system}
\label{sec:Veselova}

\subsection{The $\boldsymbol n$-dimensional rigid body} \label{sec:rigid body}
 
 We begin by recalling the model of an $n$-dimensional rigid body $\cB$ that moves in $\R^n$ about a fixed point $O$; further details can be found in \cite{Ratiu80,FedKoz} and the more recent work \cite{Izosimov}.  The configuration of such a body is given by an element
$g\in\OO(n)$ called the attitude matrix  that relates
an inertial frame in $\R^n$ with a body frame rigidly attached to $\cB$, with both frames having their origin in $O$. 
The velocity is given by the derivative $\dot g \in T_g\OO(n)$.  Given a motion of the body, a  material point $\xx\in\cB$ moves along a curve $t\mapsto g(t)\xx\in\R^n$, whose velocity at time $t$ is $\dot g(t)\xx$. If the point has mass $m$ then its kinetic energy
 is $\frac12m\|\dot g\xx\|^2$.
We make use of the left-trivialization of the tangent bundle of $\OO(n)$: 
\begin{equation}\label{eq:left trivialization}
\begin{array}{rcl}
T\OO(n) & \longrightarrow & \OO(n)\times\so(n) \\
(g,\dot g) &\longmapsto & (g,\Omega)
\end{array}
\end{equation}
where $\Omega=g^{-1}\dot g$. The kinetic energy of the motion of the point $\xx$ above becomes
$$\frac12 m\|\dot g\xx\|^2 = \frac12 m\|\Omega\xx\|^2.$$ 

If the body $\cB$ is a finite collection of particles, then the total kinetic energy is a sum over the constituent particles, while if it is a continuum this becomes an integral; in what follows we write it as an integral. This is conveniently done by  introducing the  \defn{mass tensor}  of the body 
$$\J = \int_\cB\xx\xx^T\d m(\xx),$$
which is a  symmetric $n\times n$ matrix. We will assume the body's geometry to be such that $\J$ is positive definite.  The kinetic energy, and hence the Lagrangian,  $L:T\OO(n)\simeq\OO(n)\times\so(n)\to\R$, may be written in terms of the mass tensor as
\begin{equation*}
L(g,\Omega) = \tfrac12\tr(\Omega\J\Omega^T).
\end{equation*}
Equivalently, we may write
\begin{equation}
\label{eq:Lagrangian}
L(g,\Omega) = \tfrac12 \langle M, \Omega\rangle ,
\end{equation}
where the \defn{angular momentum} in the body frame $M\in \so(n)$ is given by
\begin{equation}\label{eq:physical inertia}
M=\I(\Omega) = \J\Omega+\Omega\J, 
\end{equation}
and $\langle \cdot , \cdot \rangle$ denotes the 
usual invariant pairing on $\so(n)$  
\begin{equation}
\label{eq:pairing}
\left<\Lambda,\Omega\right> = \tfrac12\tr(\Lambda^T\Omega),
\end{equation}
that we use to identify $ \so(n)^*$ with $ \so(n)$.
The  symmetric, positive definite, linear map $\I:\so(n)\to\so(n)$   defined by  \eqref{eq:physical inertia}
is the  \defn{inertia tensor} (the Legendre transform). We will sometimes call $\I$ a  \defn{physical inertia tensor} 
when we wish to emphasise that it is not a generic symmetric and positive definite, linear operator on $\so(n)$, but rather one that 
 arises from the existence of a mass tensor $\J$ as in  \eqref{eq:physical inertia}. As explained in the introduction, 
 several works in nonholonomic mechanics \cite{FedJov, FedJov2, Jovan, JovaRubber,Jova18,Gajic} depend on assumptions on $\I$  that are 
 generally incompatible with it being physical.

A basis $\{f_1,\dots,f_n\}$ of the body frame that diagonalizes the symmetric mass tensor $\J$  is known as a \defn{principal basis}, and for such a basis we write
$$\J = \diag[J_1,J_2,\dots,J_n].$$
The quantities $J_i+J_j$ are called the \defn{principal moments of inertia} of the body.  Suppose $a,b$ are linearly independent vectors of $\R^n$: it is not hard to show that $a\wedge b$ is an eigenvector of the inertia operator $\I$ if and only if the plane $\Pi$ spanned by $a$ and $b$ is invariant under $\J$.  Such planes are called the \defn{principal planes} of the body, and it follows from $\J$ being symmetric that such a plane must contain two eigenvectors of $\J$, and if $J_i$ and $J_j$ are the corresponding eigenvalues, then $\I(a\wedge b)=(J_i+J_j)(a\wedge b)$. Given such a principal plane $\Pi$ we say the \defn{moment of inertia} of that plane is $\inertia(\Pi)=J_i+J_j$.  If the eigenvalues of $\J$ are simple then there are $\frac12 n(n-1)$ such principal planes, but if they are not simple then there are infinitely many.

\subsection{The Veselova constraint} \label{sec:constraint}

An $n$-dimensional version of the Veselova constraint was introduced by Fedorov and Kozlov \cite{FedKoz} and proceeds as follows.  Choose a fixed axis in $\R^n$ (space), and the (generalized) Veselova constraint allows only rotations in planes containing that axis  Note that   \cite{FedJov}  considers 
more general  constraints. In the notation of that reference, we only consider the case $r=1$.  We call this axis the \defn{distinguished axis}. 

In terms of the attitude matrix, let $e_1$ be a unit vector parallel to the distinguished axis, and complete to an orthonormal basis $\{e_1,\dots,e_n\}$ for $\R^n$ (space).   
Fix a principal basis for the body frame $\{f_1,\dots,f_n\}$, and as before denote by $g\in \OO(n)$ the attitude matrix of the body and by $\Omega = g^{-1}\dot g \in \so(n)$ its angular velocity expressed in these body coordinates. The matrix $\Omega_s:=g\Omega g^{-1}$ describes the angular velocity in the space frame, and the
 $n$-dimensional  Veselova constraint is that $\Omega_s$  is of the form $e_1\wedge w$ for some $w\in\R^n$, or equivalently 
\begin{equation}
\label{eq:constraint-qv}
\Omega=q\wedge v \qquad \mbox{for some} \quad v\in\R^n, \quad \mbox{where} \quad q:=g^{-1}e_1.
\end{equation}

Let us write
$$\dd = e_1\wedge\R^n =\mbox{span}\{ e_1\wedge e_2 , \dots, e_1\wedge e_n\}.$$  
The nonholonomic constraint distribution $D\subset T\OO(n)$ is then defined by $\Omega_s\in\dd$, or  $\dot g \in D_g=\dd g\subset T_g\OO(n)$.  By construction, $D$ is right invariant
and is non-integrable since $\dd$ is not a subalgebra of $\so(n)$. A vector $\dot g=g\Omega  \in D_g$ if and only if
$\Ad_g\Omega \in \dd$ which is equivalent to having
\begin{equation}
\label{eq:constraints}
\langle E_r\wedge E_s , \Omega \rangle =0, \qquad 2\leq r <s \leq n,
\end{equation}
where $E_r=g^{-1}e_r$ so that $\Ad_{g^{-1}}( e_r\wedge e_s)=E_r\wedge E_s$.

The left invariance of the Lagrangian \eqref{eq:Lagrangian}, together with the right invariance of the constraint distribution, signifies that this is an  example of an 
\emph{LR system} as introduced by Veselova \cite{Veselova}. Following this reference, we write the  equations of motion for the system in Euler-Poincar\'e-Arnold form:
\begin{equation}
\label{eq:VesEqns}
\begin{split}
  \frac{d}{dt} \left ( \I(\Omega) \right ) \ &= \  [\I (\Omega) , \Omega ] \ + \ \sum_{2\leq r <s \leq n} \lambda_{rs} E_r\wedge E_s, \\
  \frac{d}{dt} \left (E_r\wedge E_s \right )  \ &= \  [E_r\wedge E_s , \Omega ].
\end{split}
\end{equation}
Here,  $[\cdot , \cdot ]$ denotes the matrix commutator in $\so(n)$,  and the multipliers $\lambda_{rs}$ are  the unique solution to the linear system of equations
\begin{equation}
\label{eq:ConstVesEqns}
 \sum_{2\leq r <s \leq n}\langle \I^{-1} ( E_\rho \wedge E_\sigma ), E_r\wedge E_s   \rangle     \lambda_{rs} = -\langle  \I^{-1} ( E_\rho \wedge E_\sigma ) , [\I (\Omega) , \Omega ]  \rangle, \qquad 2\leq \rho <\sigma \leq n.
 \end{equation} 
This  choice of $\lambda_{rs}$ guarantees that $\langle E_r\wedge E_s , \Omega \rangle$ are first integrals of \eqref{eq:VesEqns} for all $2\leq r <s \leq n$. The dynamics of the Veselova system on $D$ is determined by considering the system \eqref{eq:VesEqns} restricted to the invariant manifold defined by  \eqref{eq:constraints}, 
together with the kinematical equation $\dot g=g\Omega$.

Both the restriction of  \eqref{eq:VesEqns} to  \eqref{eq:constraints},   and the system on  $D$, preserve the energy $H=\frac{1}{2}\langle \I (\Omega), \Omega \rangle$, and possess an
invariant measure \cite{Veselova}.

\paragraph{Symmetries}  Let $G_L=\OO(n-1)$ be the subgroup of $\OO(n)$ that fixes the distinguished axis, and  let $G_R\subset\OO(n)$ 
 be the group of symmetries of $\J$, that is 
\begin{equation}
\label{eq:def-G_R}
g\in G_R \qquad \mbox{if and only if} \qquad g\J g^{-1}=\J.
\end{equation}
The natural action of $G=G_L\times G_R$ on the configuration space is, for $(A,B)\in G_L\times G_R$, 
$(A,B)\cdot g = A\,g\,B^{-1}$. The tangent lift of this to the tangent bundle is 
$$(A,B)\cdot (g,\dot g) = (A\,g\,B^{-1},\;A\,\dot g\,B^{-1}).$$
Equivalently, using the left trivialization \eqref{eq:left trivialization},
\begin{equation}\label{eq:lifted action}
(A,B)\cdot (g,\Omega) = (A\,g\,B^{-1},\;B\,\Omega\,B^{-1}).
\end{equation}
It is clear that the Lagrangian \eqref{eq:Lagrangian} is invariant under this action, and furthermore, because $G_L$ fixes the distinguished axis, the constraint distribution $D$ is also invariant under this action.   Since the only ingredients that determine the dynamics of the Veselova top are the Lagrangian and the distribution,  it follows that the 
dynamics of the system is equivariant with respect to the action of $G=G_L\times G_R$.

\subsection{Steady rotations}
\label{sec:Steady-rotations}

 We now show the Veselova system admits \defn{steady rotation solutions}, which we define to be motions with constant angular velocity. 
We show such motions are necessarily periodic solutions in which the body steadily rotates  in a principal plane of inertia of the body
and the constraint forces vanish.  In view of the nonholonomic constraint, the orientation of the body along these solutions is such that the 
principal plane of rotation contains the distinguished axis.

We recall that the $n$-dimensional free rigid body also allows steady rotation solutions, and that these are in general quasi-periodic  if $n>3$. The periodicity 
of the steady rotations of the $n$-dimensional Veselova system is a consequence of the nonholonomic constraint that forces the angular velocity matrix to have rank 2.

\begin{proposition} \label{prop:steady rotations on D}
The $n$-dimensional Veselova system admits steady rotation solutions. These solutions are all steady rotations in principal planes, and are  characterised by the condition that the angular velocity matrix $\Omega$ satisfies $ [\I (\Omega) , \Omega ]=0$.  
 The steady rotations in the principal plane $\Pi$ are periodic with period  $2\pi/\|\Omega\|$, energy $H=\frac{1}{2}\inertia(\Pi)\|\Omega\|^2$ and momentum $M$ satisfying $\|M\|^2=(\inertia(\Pi))^2\|\Omega\|^2$.  Here
$\| \cdot \|$ denotes the norm in $\so(n)$  induced by the pairing \eqref{eq:pairing}.
\end{proposition}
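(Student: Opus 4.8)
The plan is to reduce all of the assertions to a single algebraic identity for the Euler term $[\I(\Omega),\Omega]$ and then read everything off. First I would note that the notion of steady rotation is unambiguous: differentiating $\Omega_s=g\Omega g^{-1}$ and using $\dot g=g\Omega$ gives $\dot\Omega_s=g\dot\Omega g^{-1}$, so the angular velocity is constant in the body frame iff it is constant in space, and I may work with $\Omega$. Writing $\Omega=q\wedge v$ as in \eqref{eq:constraint-qv} with $q=g^{-1}e_1$ a unit vector, and using the freedom $v\mapsto v+\alpha q$ to arrange $v\perp q$, a direct computation gives $\Omega^2=-\|\Omega\|^2P_\Pi$, where $\Pi=\mathrm{span}\{q,v\}=\mathrm{im}\,\Omega$ is the rank-two plane of $\Omega$ and $P_\Pi$ is orthogonal projection onto it. Combining this with the identity $[\I(\Omega),\Omega]=[\J\Omega+\Omega\J,\Omega]=[\J,\Omega^2]$ yields the central formula
\begin{equation*}
[\I(\Omega),\Omega]=-\|\Omega\|^2\,[\J,P_\Pi],
\end{equation*}
and $[\J,P_\Pi]=P_{\Pi^\perp}\J P_\Pi-P_\Pi\J P_{\Pi^\perp}$ vanishes precisely when $\Pi$ is $\J$-invariant, i.e.\ when $\Pi$ is a principal plane.

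For existence and the ``if'' direction I would start from a principal plane $\Pi$, pick a unit vector $q_0\in\Pi$, choose $g_0\in\OO(n)$ with $g_0^{-1}e_1=q_0$ and $v_0\in\Pi$ with $v_0\perp q_0$, and propose the curve $\Omega(t)\equiv\Omega_0=q_0\wedge v_0$, $g(t)=g_0\exp(t\Omega_0)$. Since $\Pi$ is principal the central formula gives $[\I(\Omega_0),\Omega_0]=0$, so the right-hand side of the multiplier equations \eqref{eq:ConstVesEqns} vanishes and, by uniqueness, all $\lambda_{rs}=0$; the first line of \eqref{eq:VesEqns} then reduces to $\tfrac{d}{dt}\I(\Omega)=0$, consistent with $\Omega$ constant, while $\dot g=g\Omega$ and the kinematics $\dot E_r=-\Omega E_r$ hold by construction and the constraint $\Omega_0\in q_0\wedge\R^n$ holds at $t=0$ and is preserved. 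Thus the proposed curve is a genuine solution with vanishing reaction forces.

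The converse is the step I expect to require the most care. Assume $\Omega$ is a constant solution with $\Omega\neq0$. The first line of \eqref{eq:VesEqns} forces $[\I(\Omega),\Omega]=-\sum\lambda_{rs}E_r\wedge E_s$ to lie in the reaction subspace $\mathcal V(t)=\mathrm{span}\{E_r\wedge E_s:2\le r<s\le n\}=\so(q(t)^\perp)$, equivalently $[\I(\Omega),\Omega]\,q(t)=0$ for every $t$. The subtlety is that $\mathcal V(t)$ is genuinely time dependent, because $q(t)=g(t)^{-1}e_1$ satisfies $\dot q=-\Omega q$ and hence rotates inside $\Pi=\mathrm{im}\,\Omega$, whereas the left-hand side is a fixed matrix. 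As $t$ varies the unit vectors $q(t)$ sweep a full circle and so span $\Pi$, whence the constant matrix $[\I(\Omega),\Omega]=-\|\Omega\|^2[\J,P_\Pi]$ must annihilate all of $\Pi$; using $[\J,P_\Pi]q=P_{\Pi^\perp}\J q$ for $q\in\Pi$, this says $\J\Pi\subseteq\Pi$, i.e.\ $\Pi$ is principal and $[\I(\Omega),\Omega]=0$. This establishes both that every steady rotation occurs in a principal plane and the stated characterisation.

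Finally I would extract the quantitative data. On a principal plane the text already gives that $\Omega=q\wedge v$ is an eigenvector of $\I$ with eigenvalue $\inertia(\Pi)$, so $M=\I(\Omega)=\inertia(\Pi)\,\Omega$; pairing with $\Omega$ gives $H=\tfrac12\langle M,\Omega\rangle=\tfrac12\inertia(\Pi)\|\Omega\|^2$ and taking norms gives $\|M\|^2=\inertia(\Pi)^2\|\Omega\|^2$. For the period, the restriction $\Omega|_\Pi$ has eigenvalues $\pm i\|\Omega\|$, so $\exp(t\Omega)$ is a rotation of $\Pi$ of angular speed $\|\Omega\|$ and minimal period $2\pi/\|\Omega\|$; since $\Omega$ is constant and $g(t)=g_0\exp(t\Omega)$, the whole solution on $D$ is periodic with this period, confirming the remaining claims.
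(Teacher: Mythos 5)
Your proof is correct, and its overall skeleton (existence via vanishing multipliers, a converse showing the plane of rotation is $\J$-invariant, and the quantitative facts from $\I(\Omega)=\inertia(\Pi)\Omega$) matches the paper's. The one place where you genuinely diverge is the converse. The paper writes $[\I(\Omega),\Omega]=[\J,\Omega^2]$ out in terms of $q$ and $v$, applies the relation $[\I(\Omega),\Omega]\equiv 0 \bmod q^\perp\wedge q^\perp$ to the vector $q$ to get $\J q\in\mathrm{span}\{q,v\}$ at each instant, and then invokes the simple harmonic motion of $q(t)$ and $v(t)$ to deduce separately that $\J v\in\mathrm{span}\{q,v\}$. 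You instead package $\Omega^2=-\|\Omega\|^2P_\Pi$ and observe that the single scalar-free condition $[\I(\Omega),\Omega]\,q(t)=0$, holding as $q(t)$ sweeps a full circle in the fixed plane $\Pi=\mathrm{im}\,\Omega$, already forces the constant matrix $[\J,P_\Pi]$ to annihilate all of $\Pi$, which is exactly $\J$-invariance. This removes the need for the separate argument about $\J v$ and makes the mechanism (constancy of the Euler term versus rotation of the reaction subspace $\so(q(t)^\perp)$) more transparent; the paper's version, by contrast, produces the explicit identity \eqref{eq:Jq-steady rotation}, which it reuses later. All the remaining steps --- the equivalence of constancy in body and space frames, the vanishing of the $\lambda_{rs}$ from \eqref{eq:ConstVesEqns} when $[\I(\Omega),\Omega]=0$, the preservation of the constraint via $\Ad_{g(t)}\Omega=\Ad_{g_0}\Omega$, and the period, energy and momentum computations --- coincide with the paper's.
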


It may seem the definition of steady rotation is ambiguous since it was not specified whether it is the angular velocity in the body or in space that should be constant.  In fact the two are equivalent: write $\Omega$ and $\Omega_s$ for the two angular velocities.  If $\Omega$ is constant, then the motion is given by $g(t)=g_0\exp(t\Omega)$ for some $g_0\in\OO(n)$. Then $\Omega_s=\dot g g^{-1} = g_0\Omega g_0^{-1}$, which is constant.  The argument is easily reversed to show the equivalence. 

\begin{proof}
First we show the existence of such solutions. 
If $\Omega$ is a constant rank two matrix that defines a rotation on a principal plane then $[\I(\Omega),\Omega]=0$ and the multipliers $\lambda_{rs}$ defined  by    \eqref{eq:ConstVesEqns} vanish, so the first equation in  \eqref{eq:VesEqns}
holds since $\Omega$ is constant.  
The motion on $\OO(n)$ is then $g(t)=g_0\exp (\Omega t)$ for a constant  $g_0\in \OO(n)$, and is periodic with the stated period since $\Omega$
has rank 2. The formulas for $H$ and $\| M\|^2$ follow immediately from $\I(\Omega)=\inertia(\Pi) \Omega$.

To complete the proof that $(g(t),\Omega)$ is a solution we should check that the nonholonomic constraints are satisfied along the motion. 
We have $\Ad_{g(t)}\Omega= \Ad_{g_0}\Omega$
so the constraints are indeed satisfied provided that the initial orientation of the body $g_0$ satisfies $\Ad_{g_0}\Omega \in \dd$. This is accomplished in the following way. If 
$\Omega=a\wedge b$  then
 $g_0\in \OO(n)$ should be such that $g_0^{-1}e_1$ lies on the plane spanned by $a$ and $b$; that is, the distinguished axis should lie on 
 the plane of rotation.

Next, if $(g(t),\Omega(t))$ is a solution for which $[\I(\Omega(t)),\Omega(t)]=0$, then  the multipliers $\lambda_{rs}$ defined  by    \eqref{eq:ConstVesEqns} vanish, and the first equation in  \eqref{eq:VesEqns}
implies that $\Omega$ is constant.

Suppose now that $(g(t),\Omega)$ is a solution with constant $\Omega \neq 0$.
It remains to show that $\Omega$ defines a rotation  on a principal plane (the condition that $[\I(\Omega),\Omega]=0$ follows immediately from this).
From  \eqref{eq:constraint-qv}, $\Omega=q\wedge v$ for some $v \in\R^n$ which can be chosen to be orthogonal to $q$.
Recall that $q=g^{-1}e_1$. Then differentiating shows that $\dot q = -g^{-1}(g\Omega)q = -(q\wedge v)q=v$. On the other hand, given that $\Omega$ is constant
we have
\begin{equation*}
0=\dot \Omega=  \dot q \wedge v + q\wedge \dot v = q\wedge \dot v,
\end{equation*}
which shows that $\dot v$ is parallel to $q$. These observations, together with the boundedness of $q$, imply that 
the vectors $q(t)$ and $v(t)$ describe simple harmonic motion on the plane that they span, which remains constant throughout the motion.
 We will now prove that this plane  is invariant under $\J$ which is equivalent to $\Omega$ defining a rotation on a principal plane. 

Using the definition of $\I$ given in \eqref{eq:physical inertia}, one shows that $[\I(\Omega),\Omega] = [\J,\Omega^2]$, and with $\Omega=q\wedge v$ this becomes
\begin{equation}\label{eq:commutator-steady rot}
[\I(\Omega),\Omega] = -\|v\|^2(\J q q^T - qq^T\J) -(\J vv^T-vv^T\J).
\end{equation}
Since $\Omega$ is constant, then so is $\I(\Omega)$, and the first of the equations of motion \eqref{eq:VesEqns} becomes
$$0 \ = \ [\I(\Omega),\Omega] \mod q^\perp\wedge q^\perp,$$
where $q^\perp$ is the subspace orthogonal to $q$. Applying both sides of the above relation to $q$ and using \eqref{eq:commutator-steady rot} gives
\begin{equation}\label{eq:Jq-steady rotation}
0 \ = \ -\|v\|^2\J q - \|v\|^2(q\cdot\J q)q+(q\cdot \J v)v,
\end{equation}
where $\cdot$ is  the Euclidean scalar product in $\R^n$. 
This shows that $\J q$ is in the span of $q$ and $v$ for all time.  The harmonic dynamics
of $q(t)$ and $v(t)$ described above imply that the same is true about $\J v$.
\end{proof}

\section{First reduction of the general Veselova top}
\label{sec:1st reduction}

We now perform the reduction of the system by the group $G_L=\OO(n-1)$ introduced above. This  corresponds to invariance of 
the system with respect to the rotations and reflections
of the space frame that fix the distinguished axis. This reduction was first considered in  \cite[Section 5]{FedJov} in a more general framework. 
Here we specialise some of their results and give their expressions  
 in terms of the mass tensor $\J$  of the body, which is necessary for our purposes.
The  geometry is clarified by  the decomposition introduced in \cite{FedJov}, $\so(n)= \dd \oplus \h$ where   $\dd$ is defined above 
and $\h$ is given by
$$\h = \mbox{span}\{ e_r\wedge e_s \, : \, 2\leq  r< s\leq n\},$$
where $e_1$ is a unit vector along the distinguished axis. 
This is the orthogonal complement of $\dd$ with respect to the Killing metric given above.
Note moreover that $\h$ is the Lie algebra of the subgroup $G_L=\OO(n-1)$. 

The splitting $\so(n)=\dd\oplus \h$ implies that we have a generalized Chaplygin system  \cite{Koi,BKMM}.
The reduced space $D/\OO(n-1)$ is isomorphic to the tangent bundle $T\Ss^{n-1}$. An isomorphism may be constructed by viewing $D$ as the horizontal space of a connection on the 
 principal $\OO(n-1)$-bundle  $\OO(n)\to \Ss^{n-1}$, as described in \cite{Koi}.  Explicitly, this map $D\to T\Ss^{n-1}$ is given by 
 $$(g,\Omega) \longmapsto (q,v)$$
where $q=g^{-1}e_1$ and $v$ is the unique vector orthogonal to $q$ satisfying 
\begin{equation}\label{eq:Omega and v}
\Omega=q\wedge v.
\end{equation}
Note that with this definition, $(q,v)$ satisfies $\|q\|=1$ and $q\cdot v=0$ showing that $(q,v)$ is indeed in $T\Ss^{n-1}$.  

The Legendre transform provides a further isomorphism of $T\Ss^{n-1}$ with $T^*\Ss^{n-1}$ and in our case  is given  as follows.

 \begin{proposition}\label{prop:Legendre}
For the Lagrangian given in \eqref{eq:Lagrangian}, the Legendre transform $T\Ss^{n-1}\to T^*\Ss^{n-1}$ is given by
$(q,v) \longmapsto (q,p)$, where
\begin{eqnarray} \label{eq:Legendre}
p=-\I(\Omega)q.
\end{eqnarray}
and $\Omega$ is given in \eqref{eq:Omega and v}.
\end{proposition}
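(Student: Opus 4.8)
The plan is to recognise that the Legendre transform $T\Ss^{n-1}\to T^*\Ss^{n-1}$ is nothing but the fibre derivative of the \emph{reduced} Lagrangian on $T\Ss^{n-1}$. Substituting the constraint relation \eqref{eq:Omega and v} into \eqref{eq:Lagrangian}, and using that $L$ is left-invariant so that it descends to a function of $(q,v)$, gives the reduced Lagrangian $\ell:T\Ss^{n-1}\to\R$,
\[
\ell(q,v) \;=\; \tfrac12\langle \I(q\wedge v),\,q\wedge v\rangle,
\]
defined for $q\in\Ss^{n-1}$ and $v\in T_q\Ss^{n-1}=q^\perp$. The covector $p\in T_q^*\Ss^{n-1}$ is obtained by differentiating $\ell$ in the fibre direction, and I would identify $T_q^*\Ss^{n-1}$ with $q^\perp\subset\R^n$ through the round (Euclidean) metric. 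The goal is then to show that, under this identification, $p=-\I(\Omega)q$ with $\Omega=q\wedge v$.

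First I would compute the fibre derivative. Fix $w\in q^\perp$ and differentiate $\ell(q,v+\epsilon w)$ at $\epsilon=0$. Since $\Omega=q\wedge v$ is linear in $v$, with $\tfrac{d}{d\epsilon}(q\wedge(v+\epsilon w))=q\wedge w$, and since $\I$ is a \emph{symmetric} operator on $\so(n)$, this yields
\[
\left.\frac{d}{d\epsilon}\right|_{\epsilon=0}\ell(q,v+\epsilon w) \;=\; \langle \I(\Omega),\,q\wedge w\rangle.
\]
(Note that a component of $w$ along $q$ would contribute $q\wedge q=0$, so the derivative is well defined on $T_q\Ss^{n-1}$.) The remaining task is to rewrite this pairing as a Euclidean inner product of $w$ against a vector in $q^\perp$. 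Writing $M=\I(\Omega)$, substituting $q\wedge w=qw^T-wq^T$ into the pairing \eqref{eq:pairing}, and using the skew-symmetry $M^T=-M$, a short trace manipulation reduces the expression to $\langle M,q\wedge w\rangle = -(Mq)\cdot w$. Hence the fibre derivative equals the Euclidean inner product of $w$ with $-\I(\Omega)q$, giving $p=-\I(\Omega)q$ as claimed.

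Finally I would verify that this $p$ genuinely lies in $T_q^*\Ss^{n-1}$, i.e. that $p\perp q$: this is immediate from skew-symmetry, since $q\cdot(\I(\Omega)q)=q^TMq=0$. I expect the only delicate point to be the bookkeeping in the trace computation and the sign produced by the skew-symmetry of $M$; everything else is a direct application of the linearity and symmetry of $\I$ together with the definition \eqref{eq:pairing} of the pairing. The role of the identification of $T_q^*\Ss^{n-1}$ with $q^\perp$ through the round metric is precisely what permits $p$ to be presented as the concrete vector $-\I(\Omega)q$ rather than as an abstract covector.
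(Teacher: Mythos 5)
Your proposal is correct and follows essentially the same route as the paper: compute the fibre derivative of the reduced Lagrangian, use the symmetry of $\I$ to obtain $\langle \I(\Omega), q\wedge w\rangle$, and then use the skew-symmetry of $\I(\Omega)$ in a trace manipulation to identify this with $-\I(\Omega)q\cdot w$. The only (harmless) addition is your explicit check that $p\perp q$, which the paper records as a remark immediately after the proposition rather than inside the proof.
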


Note that since $\I(\Omega)$ is skew symmetric, it follows that $p\cdot q=0$, which means  $T^*\Ss^{n-1}$ is realized as the submanifold of $\R^{2n}$ defined by the conditions
\begin{equation}
\label{eq:qp-conds}
\|q\|=1, \qquad p\cdot q=0.
\end{equation}
 
 \begin{proof}
The fibre derivative of the Lagrangian 
$$L(q,v) = \frac12\left<\I(q\wedge v),(q\wedge v)\right>$$
in the direction $w$ is
$$\frac12\left<\I(q\wedge v),(q\wedge w)\right>+\frac12\left<\I(q\wedge w),(q\wedge v)\right>.$$
By the symmetry of $\I$ this is equal to 
$\left<\I(q\wedge v),(q\wedge w)\right>$. Writing $\Omega=q\wedge v$ this is
\begin{eqnarray*}
-\frac12\tr\left(\I(\Omega)(qw^T-wq^T)\right) &=& 
-\frac12\tr\left(\I(\Omega)(qw^T)\right) + \frac12\tr\left(\I(\Omega)(wq^T)\right) \\
&=& -\frac12\tr\left(w^T\I(\Omega)q\right) + \frac12\tr\left(q^T\I(\Omega)w\right) \\
&=& -\I(\Omega)q\cdot w.
\end{eqnarray*}
This shows that $p=-\I(\Omega)q$ as required.
 \end{proof}

Composing the two isomorphisms gives 
$$D/\OO(n-1)\longrightarrow T^*\Ss^{n-1},\quad (g,\Omega) \longmapsto (q, p),$$
where $q=g^{-1}e_1$ and $p=-\I(\Omega)q$. This map is easily shown to be well-defined. 

Following the action of $G=G_L\times G_R$ on $D$ (see Section\,\ref{sec:constraint}) through the reduction process above it is easy to see that,

\begin{corollary}\label{coroll:symmetry on T^*S}
The symmetry $G=G_L\times G_R$ of the Veselova top descends to the natural action of $G_R$ on the spaces $T\Ss^{n-1}$ and $T^*\Ss^{n-1}$; that is $g\in G_R$ acts by $g\cdot(q,v) = (gq,gv)$ and $g\cdot(q,p) = (gq,gp)$.
\end{corollary}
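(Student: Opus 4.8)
The plan is to track the action of $G=G_L\times G_R$ through each stage of the reduction, using the fact—already noted in Section~\ref{sec:constraint}—that the $G_L$- and $G_R$-actions commute, so that the $G_R$-factor descends to a well-defined action on the quotient $D/G_L$. Since we are reducing precisely by $G_L$, the residual symmetry is the $G_R$-factor, and it suffices to compute the effect of an element $(I,h)$ with $h\in G_R$. By \eqref{eq:lifted action} this element sends $(g,\Omega)$ to $(g\,h^{-1},\,h\,\Omega\,h^{-1})$, and I would simply push this forward along the reduction maps $(g,\Omega)\mapsto(q,v)$ and $(g,\Omega)\mapsto(q,p)$.

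First I would compute the effect on the base point $q=g^{-1}e_1$. Writing $g'=gh^{-1}$ gives $q'=(g')^{-1}e_1=h\,g^{-1}e_1=hq$, so the $\Ss^{n-1}$-coordinate transforms by $q\mapsto hq$, as claimed. For the fibre coordinate $v$, I would use the elementary identity $h(a\wedge b)h^{-1}=(ha)\wedge(hb)$, valid because $h\in\OO(n)$ satisfies $h^{-1}=h^T$. Applying it to $\Omega=q\wedge v$ yields the transformed angular velocity $\Omega'=h\Omega h^{-1}=(hq)\wedge(hv)$; since $h$ preserves the Euclidean inner product, $hv\perp hq$ whenever $v\perp q$, so by the uniqueness in \eqref{eq:Omega and v} the new fibre vector is $v'=hv$. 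This establishes $g\cdot(q,v)=(hq,hv)$.

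For the cotangent version the decisive step is the $G_R$-equivariance of the inertia operator. I would prove $\I(h\Omega h^{-1})=h\,\I(\Omega)\,h^{-1}$ by expanding $\I(\Omega)=\J\Omega+\Omega\J$ and invoking the defining relation $h\J h^{-1}=\J$ of \eqref{eq:def-G_R}, i.e.\ that $h$ commutes with $\J$. Substituting into $p=-\I(\Omega)q$ then gives $p'=-\I(\Omega')q'=-h\,\I(\Omega)\,h^{-1}(hq)=h\,p$, so $g\cdot(q,p)=(hq,hp)$, which is the natural linear action of $h\in G_R\subset\OO(n)$ on $\R^n$ restricted to $T^*\Ss^{n-1}$.

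The only genuinely substantive point is this equivariance of $\I$, which is exactly where the definition of $G_R$ as the symmetry group of the mass tensor enters the argument; every other step is a routine manipulation of orthogonal conjugation. Well-definedness of the descended $G_R$-action on $D/G_L\cong T^*\Ss^{n-1}$ requires no separate verification, being immediate from the commutativity of the two factors.
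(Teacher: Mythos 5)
Your proof is correct and follows exactly the route the paper intends: the paper gives no written proof, merely asserting that the corollary follows by "following the action through the reduction process above", and your computation (pushing $(I,h)$ through $(g,\Omega)\mapsto(q,v)\mapsto(q,p)$, using $h(a\wedge b)h^{-1}=(ha)\wedge(hb)$ and the $\J$-equivariance of $\I$) is precisely that verification written out in full. No gaps.
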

The following lemma is a particular instance of  \cite[Theorem 5.4]{FedJov}:
\begin{lemma}\label{lemma:q,p constant}
The vectors $q$ and $p$ are constant in the space frame, that is, they satisfy
\begin{equation}\label{eq:qdot}
\dot q=-\Omega q,\qquad \dot p = -\Omega p.
\end{equation}
\end{lemma}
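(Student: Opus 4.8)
The plan is to verify the two identities in~\eqref{eq:qdot} by direct computation, the first in one line and the second by combining it with the momentum equation while exploiting the structure of the constraint forces; this gives a self-contained proof independent of \cite[Theorem 5.4]{FedJov}. The first identity is immediate: differentiating $q=g^{-1}e_1$ and using $\tfrac{d}{dt}(g^{-1})=-g^{-1}\dot g\,g^{-1}$ together with $\Omega=g^{-1}\dot g$ yields
\begin{equation*}
\dot q=-g^{-1}\dot g\,g^{-1}e_1=-\Omega\,(g^{-1}e_1)=-\Omega q,
\end{equation*}
which is just the statement that $gq=e_1$ is fixed in space. So the only real work lies in the identity for $p$.

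For the second identity I would set $M:=\I(\Omega)$, so that $p=-Mq$ by~\eqref{eq:Legendre}. The product rule and the first identity give $\dot p=-\dot M q-M\dot q=-\dot M q+M\Omega q$. I would then substitute the momentum equation from the first line of~\eqref{eq:VesEqns},
\begin{equation*}
\dot M=[M,\Omega]+\sum_{2\leq r<s\leq n}\lambda_{rs}\,E_r\wedge E_s,
\end{equation*}
to obtain $\dot p=-[M,\Omega]q+M\Omega q-\sum_{2\leq r<s\leq n}\lambda_{rs}\,(E_r\wedge E_s)q$. Expanding the commutator, the first two terms collapse to $\Omega M q=-\Omega p$, which is exactly the desired right-hand side; thus the claim reduces to showing that the constraint-force term annihilates $q$.

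The key observation, and the only step requiring any care, is that the Lagrange multipliers drop out completely. Since $E_r=g^{-1}e_r$, the vectors $E_1=q,E_2,\dots,E_n$ form an orthonormal frame, and for $2\leq r<s\leq n$ one has
\begin{equation*}
(E_r\wedge E_s)q=E_r\,(E_s\cdot q)-E_s\,(E_r\cdot q)=0,
\end{equation*}
because $q=E_1$ is orthogonal to every $E_r$ and $E_s$ with index at least $2$. Hence $\sum_{2\leq r<s\leq n}\lambda_{rs}\,(E_r\wedge E_s)q=0$ regardless of the values of the $\lambda_{rs}$, and $\dot p=-\Omega p$ follows. I do not anticipate a genuine obstacle: the content of the lemma is precisely this structural fact that the constraint forces, which lie in the span of $\{E_r\wedge E_s:r,s\geq 2\}$, kill the distinguished direction $q=E_1$, so that the reduced momentum covector is transported covariantly and its space-frame representative $gp=-(gMg^{-1})e_1$ is conserved along the motion.
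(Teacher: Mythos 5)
Your proof is correct and follows essentially the same route as the paper: differentiate $q=g^{-1}e_1$ directly, and for $p$ substitute the momentum equation from \eqref{eq:VesEqns} into $\dot p=-\tfrac{d}{dt}(\I(\Omega)q)$, using $(E_r\wedge E_s)q=0$ for $2\leq r<s\leq n$ to kill the multiplier terms. The only difference is that you spell out the cancellation of the commutator terms and the orthonormality argument explicitly, which the paper leaves implicit.
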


\begin{proof}
Since $q=g^{-1}e_1$, in space it becomes $e_1$ which is fixed by definition (alternatively, differentiating $q=g^{-1}e_1$ shows $\dot q = -\Omega q$).  Next, starting with the relation $p=-\I(\Omega)q$ and using \eqref{eq:VesEqns} shows that
\begin{equation} \label{eq:pdot}
\dot p =\left ( -\frac{d}{dt} (\I(\Omega)) + \I(\Omega) \Omega  \right ) q = \Omega  \I(\Omega) q = -\Omega p,
\end{equation}
where we have used $(E_r\wedge E_s)q=0$ for all $2\leq r <s \leq n$.
\end{proof}

Combining the first equation in~\eqref{eq:qdot} with Equation\,\eqref{eq:Omega and v} shows that along the motion
\begin{equation}\label{eq:Omqqdot}
\Omega= q\wedge \dot q.
\end{equation}
We will use this repeatedly in what follows.

 Let $C$ be the $n\times n$ matrix defined by,
\begin{equation}\label{eq:Cmatrix}
C=C(q) = \left(\J + (q\cdot\J q)\mathrm{Id}_n\right)^{-1},
\end{equation}
where $\mathrm{Id}_n$ denotes the identity matrix. (It is easy to see the matrix in brackets is diagonal and positive definite and hence invertible.)
The following proposition  gives the explicit form of the reduced equations for a general physical inertia tensor (i.e., one derived from a mass tensor $\J$). 

\begin{proposition}\label{prop:eqns-nD} 
The reduced equations of motion on $T^*\Ss^{n-1}$ are
\begin{eqnarray}
\label{eq:RedEqnsGeneral}
\left\{\begin{aligned}
\dot q &= C\left[ p - \left(\frac{p\cdot Cq}{q\cdot Cq}\right)q\right],    \\
\dot p &=-2H(q,p)q.
\end{aligned}\right.
\end{eqnarray}
Here,   
\begin{equation}
\label{eq:energy-general}
H(q,p)=\frac{1}{2} \left (p\cdot Cp -\frac{(p\cdot Cq)^2}{ q\cdot Cq }\right ) ,
\end{equation}
is the energy  integral.  Furthermore, this system is invariant under the action of $G_R$ described in Corollary~\ref{coroll:symmetry on T^*S}.
\end{proposition}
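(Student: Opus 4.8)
The starting point is Lemma~\ref{lemma:q,p constant}, which gives $\dot q = -\Omega q$ and $\dot p = -\Omega p$, together with the kinematic identity $\Omega = q\wedge\dot q$ from \eqref{eq:Omqqdot} and the Legendre relation $p = -\I(\Omega)q$ from \eqref{eq:Legendre}. The whole task is to rewrite the right-hand sides $-\Omega q$ and $-\Omega p$ as functions of $(q,p)$ alone. Writing $v=\dot q$, the conditions $\|q\|=1$ and \eqref{eq:qp-conds} give $q\cdot v = 0$ and $q\cdot p = 0$, and $\Omega = q\wedge v = qv^T-vq^T$.

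The core step is to invert the Legendre transform. Expanding $p = -(\J\Omega+\Omega\J)q$ using $\Omega q = -v$ (which follows from $q\cdot v=0$, $\|q\|=1$) and $\Omega\J q = (v\cdot\J q)q-(q\cdot\J q)v$, I obtain
\begin{equation*}
p = \bigl(\J+(q\cdot\J q)\mathrm{Id}_n\bigr)v - (v\cdot\J q)\,q = C^{-1}v - (v\cdot\J q)\,q,
\end{equation*}
with $C$ as in \eqref{eq:Cmatrix}. Solving for $v$ gives $v = Cp + (v\cdot\J q)\,Cq$, and the unknown scalar $v\cdot\J q$ is pinned down by imposing $q\cdot v = 0$: taking the scalar product with $q$ and using that $C$ is symmetric yields $v\cdot\J q = -(p\cdot Cq)/(q\cdot Cq)$. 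Substituting back and using $\dot q=v$ produces exactly the first equation in \eqref{eq:RedEqnsGeneral}.

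For the second equation I compute $\dot p = -\Omega p = -(v\cdot p)\,q$, where the term $(q\cdot p)v$ drops out since $p\cdot q=0$. It then remains to identify the scalar $v\cdot p$: substituting the expression just found for $v$ and again using the symmetry of $C$ gives $v\cdot p = p\cdot Cp - (p\cdot Cq)^2/(q\cdot Cq) = 2H(q,p)$, which establishes $\dot p = -2H(q,p)q$. That this $H$ really is the energy follows by checking $\tfrac12\langle\I(\Omega),\Omega\rangle = \tfrac12(v\cdot p)$; this is a short trace computation using $p=-\I(\Omega)q$ and the skew-symmetry of $\I(\Omega)$, and it simultaneously confirms conservation of $H$. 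Finally, for the $G_R$-invariance I invoke Corollary~\ref{coroll:symmetry on T^*S}, by which $g\in G_R$ acts as $(q,p)\mapsto(gq,gp)$. The key remark is that $g\J g^{-1}=\J$ together with orthogonality forces $g$ to commute with $\J$, hence with $C$, and also gives $gq\cdot\J gq = q\cdot\J q$, so that $C(gq)=C(q)$. Consequently every scalar factor $p\cdot Cq$, $q\cdot Cq$, $p\cdot Cp$ appearing in \eqref{eq:RedEqnsGeneral}--\eqref{eq:energy-general} is invariant and the vector field transforms equivariantly.

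The only genuinely delicate point is the inversion of the Legendre relation in the second paragraph: one must carry along the constraint $q\cdot v=0$ throughout, since it is precisely this condition that fixes the scalar multiplier and forces the appearance of the projection factor $(p\cdot Cq)/(q\cdot Cq)$. Everything else reduces to routine, if slightly lengthy, manipulation of traces and scalar products.
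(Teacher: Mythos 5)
Your proposal is correct and follows essentially the same route as the paper's proof: invert the Legendre relation $p=-\I(\Omega)q$ for $\dot q$ using the constraints \eqref{eq:qp-conds}, obtain $\dot p=-(\dot q\cdot p)q$ from Lemma~\ref{lemma:q,p constant}, identify $\dot q\cdot p$ with $2H$, and check $G_R$-invariance via $C(gq)=C(q)$. You simply carry out explicitly the inversion step that the paper leaves to the reader, and your computations there are accurate.
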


\begin{proof}
In view of~\eqref{eq:Omqqdot} we have
\begin{equation*}
p=-\I(\Omega)q= -(\J (q\wedge \dot q) + (q\wedge \dot q) \J)q.
\end{equation*}
We would like to solve the above equation for $\dot q$ in terms of $q$ and $p$. This is not possible for general vectors in $\R^n$ since the right hand side is a linear expression in $\dot q$ that vanishes whenever $\dot q$ is parallel to $q$.  However,
on $T^*\Ss^{n-1}$ the equations  \eqref{eq:qp-conds}  hold and, since $\dot q\cdot q=0$, we may uniquely write $\dot q$ as in the first equation in  \eqref{eq:RedEqnsGeneral}.

Using that $p\cdot q=0$ along $T^*\Ss^{n-1}$,
 the equation $\dot p =-\Omega p$ (see lemma above)  simplifies to $\dot p=- (\dot q\cdot p) q=-2H(q,p) q$ which is the second equation in \eqref{eq:RedEqnsGeneral}.

It remains to show that $H(q,p)$ given by \eqref{eq:energy-general} coincides with  the energy $\frac{1}{2}\langle \I (\Omega), \Omega \rangle $. Using once again~\eqref{eq:Omqqdot} we obtain
\begin{equation*}
\tfrac{1}{2}\langle \I (\Omega), \Omega \rangle = -\tfrac{1}{2} \tr ( \J (q\wedge \dot q)^2)=\tfrac{1}{2}C^{-1}\dot q \cdot \dot q=\tfrac{1}{2}p \cdot \dot q =H(q,p).
\end{equation*}
Finally, to see the  invariance, note that \eqref{eq:def-G_R} implies that  $C(gq) = C(q)$ for all $g\in G_R$.
\end{proof}

 Lemma \ref{lemma:q,p constant} above allows us to conclude the following proposition 
 (which  is an instance of a general result on kinetic LR systems given in \cite[Proposition 2.2]{FedJov}).
\begin{proposition}\label{prop:conservation of P}
The square $\|p\|^2$ of the momentum is conserved under the dynamics on $T^*\Ss^{n-1}$.
\end{proposition}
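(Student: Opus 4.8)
The plan is to differentiate $\|p\|^2$ directly along the flow and use the evolution equation for $p$ established in Lemma~\ref{lemma:q,p constant}. Here $p\in\R^n$ is the cotangent vector realised, via \eqref{eq:qp-conds}, as a vector orthogonal to $q$, so $\|p\|^2 = p\cdot p$ is the Euclidean norm squared on the ambient $\R^n$. Then $\frac{d}{dt}\|p\|^2 = 2\,p\cdot\dot p$, and the entire argument reduces to showing that $p\cdot\dot p = 0$.

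There are two equivalent ways to conclude, and I would present whichever reads more cleanly in context. The first uses the space-frame equation $\dot p = -\Omega p$ from Lemma~\ref{lemma:q,p constant}: then $p\cdot\dot p = -p^T\Omega p$, which vanishes because $\Omega\in\so(n)$ is skew-symmetric, so that $p^T\Omega p = (p^T\Omega p)^T = p^T\Omega^T p = -p^T\Omega p$. The second uses the reduced equation $\dot p = -2H(q,p)\,q$ from Proposition~\ref{prop:eqns-nD}: then $p\cdot\dot p = -2H(q,p)\,(p\cdot q) = 0$, since $p\cdot q = 0$ holds identically on $T^*\Ss^{n-1}$ by \eqref{eq:qp-conds}. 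Either route gives $\frac{d}{dt}\|p\|^2 = 0$, so $\|p\|^2$ is constant along the flow.

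There is essentially no obstacle here: the conservation is an immediate consequence of the skew-symmetry of the angular velocity (equivalently, of the orthogonality $p\cdot q = 0$ that defines the cotangent bundle), and the proof is a single line once the evolution equation for $p$ is in hand. The only point worth making explicit is that the relevant norm is the Euclidean one on $\R^n$, consistent with the realisation of $p$ as a vector orthogonal to $q$ in \eqref{eq:qp-conds}.
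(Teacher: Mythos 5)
Your proposal is correct, and your second route ($p\cdot\dot p = -2H(q,p)\,(p\cdot q) = 0$ using the reduced equation and $p\cdot q=0$) is exactly the paper's one-line argument; the first route via skew-symmetry of $\Omega$ is an equivalent restatement. No gaps.
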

\begin{proof}
This  is  clear from \eqref{eq:RedEqnsGeneral}, since $p\cdot \dot p = -2H p\cdot q = 0$.
\end{proof}

Another application of Lemma\,\ref{lemma:q,p constant} is the following reconstruction formula, valid for the 3-dimensional system, and found in a slightly different form in \cite{Veselov-Veselova-1988} (not using $p$). 

\begin{proposition}\label{prop:reconstruction 3D}
In dimension 3, if $(q(t),p(t))$ is a solution of the reduced equations of motion on $T^*\Ss^2$, then for any $g_0\in G_L=\OO(2)$ the curve
\begin{equation}
\label{eq:3d-reconst-g}
g(t)=g_0\begin{pmatrix}
q(t)^T\cr
\frac1{\|p(t)\|}p(t)^T \\
\frac1{\|p(t)\|}(q(t)\times p(t))^T 
\end{pmatrix}
\end{equation}
is a solution to the full Veselova system, and all solutions with $p\neq0$ can be written in this way.
\end{proposition}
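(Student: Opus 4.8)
The plan is to verify the formula directly for the forward implication and to use a gauge/uniqueness argument for the converse. Write $\tilde g(t)$ for the matrix whose rows are $q^T$, $\hat p^T$ and $(q\times\hat p)^T$ with $\hat p:=p/\|p\|$, so that $g=g_0\tilde g$; because the Lagrangian and the constraint are left-invariant, $\Omega=g^{-1}\dot g=\tilde g^{-1}\dot{\tilde g}$ does not involve $g_0$, and it suffices to analyse $\tilde g$. Three facts are immediate: (i) $\tilde g\in\SO(3)$, since $q,\hat p,q\times\hat p$ are orthonormal (using $q\cdot p=0$, $\|q\|=1$, whence $\|q\times\hat p\|=1$), and left multiplication by $g_0\in G_L\subset\OO(3)$ keeps $g$ in $\OO(3)$; (ii) $g^{-1}e_1=q$, because the first column of $\tilde g^{-1}=\tilde g^T$ is $q$ and $g_0$ fixes $e_1$; (iii) differentiating $q=g^{-1}e_1$ gives $\dot q=-\Omega q$ with no further hypothesis.

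Next I would check the Veselova constraint $\Omega_s=\dot g\,g^{-1}\in\dd$. With $u_1,u_2,u_3$ the rows of $\tilde g$ one finds $\Omega_s=\sum_{i<j}(\dot u_i\cdot u_j)\,e_i\wedge e_j$, so the constraint amounts to the single scalar condition $\dot u_2\cdot u_3=0$. Since $\|p\|$ is conserved (Proposition~\ref{prop:conservation of P}) and $\dot p=-2H\,q$ by~\eqref{eq:RedEqnsGeneral}, we get $\dot u_2=\dot p/\|p\|=-(2H/\|p\|)\,q\perp(q\times\hat p)=u_3$, so the constraint holds. Together with (ii)--(iii) this forces $\Omega=q\wedge\dot q$: writing $\Omega=q\wedge v$ with $v\perp q$, relation (iii) gives $v=\dot q$. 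Finally, since $(q,p)$ solves~\eqref{eq:RedEqnsGeneral}, the first reduced equation is exactly the inversion of $p=-\I(q\wedge\dot q)q$, so $p=-\I(\Omega)q$ and $(\tilde g,\Omega)$ indeed projects to the given $(q,p)$.

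It remains to check the equations of motion~\eqref{eq:VesEqns}. The second is a kinematic identity valid for any curve in $\OO(3)$: from $\dot E_r=-\Omega E_r$ and $[\Omega,a\wedge b]=(\Omega a)\wedge b+a\wedge(\Omega b)$ one obtains $\tfrac{d}{dt}(E_r\wedge E_s)=[E_r\wedge E_s,\Omega]$. For the first equation, set $M=\I(\Omega)$. A direct computation with $\Omega=q\wedge\dot q$, $q\cdot p=0$ and $\dot q\cdot p=2H$ gives $-\Omega p=-2H\,q=\dot p$, i.e. $\dot p=-\Omega p$; substituting $p=-Mq$ and $\dot q=-\Omega q$ rearranges this to $(\dot M-[M,\Omega])\,q=0$. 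Now $\dot M-[M,\Omega]$ is skew-symmetric, and in dimension three a skew matrix annihilating the nonzero vector $q$ must be a scalar multiple of the skew matrix whose kernel is $\R q$ --- that is, of $E_2\wedge E_3=g^{-1}(e_2\wedge e_3)g$. Hence $\dot M-[M,\Omega]=\lambda\,E_2\wedge E_3$ for some scalar $\lambda(t)$, which is the first equation of~\eqref{eq:VesEqns}. Because the constraint $\langle E_2\wedge E_3,\Omega\rangle$ vanishes identically along the curve, this $\lambda$ necessarily satisfies the constraint-preservation condition~\eqref{eq:ConstVesEqns}, whose solution is unique; thus $(g,\Omega)$ is a genuine solution.

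For the converse, let $g$ be any solution with $p\neq0$, projecting to $(q,p)$; by~\eqref{eq:Omqqdot} its angular velocity is $\Omega=q\wedge\dot q$, the same matrix obtained above for $\tilde g$. Then $h:=g\tilde g^{-1}$ satisfies $\dot h=g\Omega\tilde g^{-1}-g\Omega\tilde g^{-1}=0$, so $h$ is constant, while $h\,e_1=g q=e_1$ (Lemma~\ref{lemma:q,p constant}) shows $h\in G_L$; hence $g=h\tilde g$ has the asserted form. The crux of the argument is the first equation of~\eqref{eq:VesEqns}: rather than solving for the multiplier head-on, the efficient route is to establish $\dot p=-\Omega p$ first and then exploit the three-dimensional fact that a skew matrix is fixed up to scale by its kernel, which delivers the term $\lambda\,E_2\wedge E_3$ automatically; the remaining delicate point is to argue that preservation of the identically-vanishing constraint pins $\lambda$ down to the physical multiplier of~\eqref{eq:ConstVesEqns}.
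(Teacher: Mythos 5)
Your proof is correct, but it follows a genuinely different route from the paper's. The paper's argument is much shorter: it invokes Lemma~\ref{lemma:q,p constant} to note that $q$, $p$, and hence $q\times p$ are constant in space, so each row $r^T$ of the matrix in \eqref{eq:3d-reconst-g} satisfies $\dot r^T=r^T\Omega$ with $\Omega=q\wedge\dot q$, whence $\dot g=g\Omega$; it then computes $\Omega_s=\dot g\,g^{-1}$ explicitly and observes that its $e_2\wedge e_3$-component vanishes, so the curve is horizontal (lies in $D$). The conclusion that a horizontal curve over a reduced solution, with the correct left-trivialised velocity, is a full solution is left implicit, resting on the reconstruction principle for generalised Chaplygin systems; the converse is likewise not spelled out. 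You instead verify the unreduced Euler--Poincar\'e--Arnold system \eqref{eq:VesEqns}--\eqref{eq:ConstVesEqns} directly: you establish $\dot p=-\Omega p$, deduce $(\dot M-[M,\Omega])q=0$, and use the three-dimensional fact that a skew matrix annihilating $q$ is a multiple of $E_2\wedge E_3$ to produce the reaction term, then pin down the multiplier by differentiating the identically vanishing constraint (which reproduces \eqref{eq:ConstVesEqns}, whose solution is unique since $\langle\I^{-1}(E_2\wedge E_3),E_2\wedge E_3\rangle>0$). You also make the converse explicit via the constancy of $h=g\tilde g^{-1}$ and $he_1=e_1$. What each approach buys: the paper's proof is economical and reuses the Chaplygin reduction machinery already in place; yours is self-contained, does not appeal to the general reconstruction principle, exhibits the constraint force explicitly, and supplies the converse in full --- at the cost of being specific to $n=3$ in the multiplier step (which is all that is claimed anyway). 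I see no gaps.
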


\begin{proof}
We know $q,p$ are constant in space, whence $q\times p$ is also constant in space. Thus each row satisfies $\dot r = r\Omega$ and hence $\dot g = g\Omega$.  Furthermore, for such $g_0$, the velocity $\dot g$ lies in the distribution $D$.  This is because the angular velocity in space $\Omega_s$ satisfies
$$\Omega_s = \dot g(t) g^{-1}(t) = g_0
\begin{pmatrix}
0&2H/\|p\| & (*)\\
-2H/\|p\| & 0 & 0 \\
(*) & 0 & 0
\end{pmatrix}g_0^{-1}
$$
and since the lower left $2\times 2$ block vanishes, and this is preserved by $g_0$, $\Omega_s$ satisfies the constraint.
\end{proof}

\subsection{Reduced dynamics of the general Veselova top}
With the equations of motion above, we are able to discuss some aspects of the reduced dynamics of the general (physical) $n$-dimensional Veslova top.

Note that since the system on $D$ is an LR system it follows from a theorem of Fedorov and Jovanovi\'c \cite[Theorem 3.3]{FedJov} that Equations~\eqref{eq:RedEqnsGeneral}  possess a smooth invariant measure on  $T^*\Ss^{n-1}$; namely the push forward of the invariant measure 
on $D$ mentioned already. A formula for this volume form, valid in a more general framework than the one 
we consider, is given in \cite[Theorem 5.5]{FedJov}. The following proposition gives a non-trivial simplification of this formula to our setting.

\begin{proposition}\label{prop:measure-nD} 
The reduced equations of motion \eqref{eq:RedEqnsGeneral} on $T^*\Ss^{n-1}$ have invariant measure
\begin{equation*}
\sqrt{\frac{\det (C)}{q\cdot C q}} \, \sigma,
\end{equation*}
where $\sigma$ is the 
Liouville volume form on $T^*\Ss^{n-1}$.
\end{proposition}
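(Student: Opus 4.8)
The plan is to verify directly that the measure $\mu\,\sigma$, with $\mu=\sqrt{\det(C)/(q\cdot Cq)}$, is invariant under the reduced flow $X$ defined by the vector field \eqref{eq:RedEqnsGeneral}. Writing $\mathcal{L}_X$ for the Lie derivative and recalling $\mathcal{L}_X\sigma=(\operatorname{div}_\sigma X)\,\sigma$, invariance of $\mu\sigma$ is equivalent to the single scalar identity
\begin{equation*}
\operatorname{div}_\sigma X \;=\; -\,X(\ln\mu)\;=\; -\tfrac12\,X(\ln\det C)\;+\;\tfrac12\,X\bigl(\ln(q\cdot Cq)\bigr),
\end{equation*}
to be checked on the constraint manifold $T^*\Ss^{n-1}=\{\|q\|=1,\ p\cdot q=0\}$ of \eqref{eq:qp-conds}. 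Since the existence of \emph{some} smooth invariant density of the form $m\,\sigma$ is already guaranteed (by \cite[Theorem 3.3]{FedJov}, as noted just above the proposition), the real content is the explicit identification of the density, and the displayed identity is exactly what must be confirmed for our candidate $\mu$.

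I would compute the right-hand side first, as it is the easy part. The key observation is that $C=C(s)$ depends on $q$ only through the scalar $s:=q\cdot\J q$, and that $\J$ commutes with $C$. From $C=(\J+s\,\mathrm{Id}_n)^{-1}$ one gets $\tfrac{d}{ds}C=-C^2$, hence along the flow $\dot C=-C^2\dot s$ with $\dot s=2\,\J q\cdot\dot q$; and $\ln\det C=-\sum_i\ln(J_i+s)$ gives $X(\ln\det C)=-\tr(C)\,\dot s$. Differentiating $q\cdot Cq$ then yields $X\bigl(\ln(q\cdot Cq)\bigr)=\bigl(2\,\dot q\cdot Cq-2(q\cdot C^2q)\,\J q\cdot\dot q\bigr)/(q\cdot Cq)$, into which $\dot q$ is substituted from the first equation of \eqref{eq:RedEqnsGeneral}. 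Collecting terms expresses $-X(\ln\mu)$ as an explicit rational function of $q,p$ built from the contractions $p\cdot C^kq$ and $q\cdot C^kq$.

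The main obstacle is the left-hand side, namely computing $\operatorname{div}_\sigma X$ with respect to the \emph{canonical} Liouville measure on $T^*\Ss^{n-1}$. The difficulty is that $(q,p)\in\R^{2n}$ are redundant coordinates subject to the two constraints, and that $\sigma$ is the symplectic (not the induced Riemannian) volume, so one cannot simply read off $\sum_i(\partial\dot q_i/\partial q_i+\partial\dot p_i/\partial p_i)$ in the ambient variables. I would handle this by splitting $X=X_H+Y$, where $X_H$ is the Hamiltonian vector field of the energy $H$ in \eqref{eq:energy-general} for the canonical symplectic structure on $T^*\Ss^{n-1}$. Because the first equation of \eqref{eq:RedEqnsGeneral} is precisely $\dot q=\partial H/\partial p$, the base components of $X$ and $X_H$ agree, so $Y=X-X_H$ is \emph{vertical}; and since the Liouville volume is preserved by Hamiltonian flows, $\operatorname{div}_\sigma X=\operatorname{div}_\sigma Y$. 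In canonical fibre coordinates a vertical field has divergence equal to the fibre-trace $\partial Y^a/\partial p_a$, so the whole computation reduces to the deviation of the actual $\dot p=-2H q$ from the Hamiltonian $\dot p$. Carrying this out rigorously requires choosing local canonical coordinates on $\Ss^{n-1}$ and tracking the resulting metric terms; the expected outcome is exactly the rational expression found on the right-hand side, so the two sides cancel.

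Recording this cancellation concludes the proof. As a cross-check and alternative route, one may instead start from the general formula for the pushed-forward LR measure in \cite[Theorem 5.5]{FedJov} and reduce it algebraically to $\sqrt{\det(C)/(q\cdot Cq)}$; but the self-contained divergence verification above avoids importing that formula and makes transparent why the density depends on $q$ only through $C$.
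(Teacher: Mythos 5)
Your handling of the right-hand side is correct: writing $C=(\J+s\,\mathrm{Id}_n)^{-1}$ with $s=q\cdot\J q$, so that $X(\ln\det C)=-\tr(C)\,\dot s$ and $X\bigl(\ln(q\cdot Cq)\bigr)=\bigl(2\dot q\cdot Cq-2(q\cdot C^2q)\,\J q\cdot\dot q\bigr)/(q\cdot Cq)$, reproduces exactly the paper's formula for $\partial\mu/\partial q$. The genuine gap is on the left-hand side, which is where the entire quantitative content of the proposition sits. Your reduction $\operatorname{div}_\sigma X=\operatorname{div}_\sigma Y$ with $Y=X-X_H$ vertical is a valid and rather elegant observation (the base components do agree, since $\partial H/\partial p=\dot q$ and this vector is tangent to $\Ss^{n-1}$, and Hamiltonian fields preserve $\sigma$), but it does not make the computation go away: $Y$ is the difference of two expressions quadratic in $p$ (the actual $\dot p=-2Hq$ and the canonical $-\partial H/\partial q$ expressed in fibre coordinates), and its fibre trace is a nontrivial rational function of $q$ and $p$ that must be shown to equal $-X(\ln\mu)$. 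Asserting that ``the expected outcome is exactly the rational expression found on the right-hand side'' is a restatement of the proposition, not a proof of it; nothing in your setup forces the cancellation. The paper's proof spends essentially all of its effort on precisely this verification, and the collapse only happens after invoking the identity $\J Ck\cdot q+(Ck\cdot q)(\J q\cdot q)=k\cdot q$ (valid for all $k\in\R^n$) to rewrite $q\cdot\J Cq$ and $q\cdot\J Cp$ on the constraint manifold, after which everything reduces to a multiple of $q\cdot\dot q=0$.

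A second, related point: your premise that one ``cannot simply read off $\sum_i(\partial\dot q^i/\partial q^i+\partial\dot p_i/\partial p_i)$ in the ambient variables'' because $\sigma$ is the symplectic volume is the opposite of what the paper uses. On $T^*\Ss^{n-1}$ realized as $\{\|q\|=1,\ p\cdot q=0\}\subset\R^{2n}$, the Liouville volume coincides with the volume inherited from the Euclidean measure $dq\wedge dp$, so the divergence of the (tangent) ambient extension of the vector field with respect to $\mu\,dq\wedge dp$ can be computed naively and then restricted; the $\dot p$-part contributes $-2\dot q\cdot q$, which vanishes on the constraint manifold, and the $\dot q$-part carries the real content. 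By discarding this route you have traded a concrete ambient computation for an unexecuted one in local canonical coordinates with ``metric terms'' you do not track. To complete your argument you must either carry out the fibre-trace computation of $Y$ explicitly and match it against your right-hand side, or adopt the ambient-coordinate identification and perform the divergence calculation there as the paper does.
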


\begin{proof}  
The Liouville form $\sigma$  on $T^*\Ss^{n-1}$ coincides with the volume form on $T^*\Ss^{n-1}$ inherited from the Euclidean measure $dq\wedge dp$ in 
the ambient space $\R^{2n}$. To prove the result we compute the divergence of the vector field on $\R^{2n}$ defined by the 
equations  \eqref{eq:RedEqnsGeneral} with respect to $\mu(q) \, dq\wedge dp$, where $\mu(q)=\sqrt{\frac{\det (C)}{q\cdot C q}}$,
 and show that its restriction to $T^*\Ss^{n-1}$ vanishes.
Given that $\mu$ is independent of $p$, this is equivalent to showing that 
\begin{equation*}
y(q,p):=  \mu\sum_{i=1}^n  \left ( \frac{\partial \dot q^i}{\partial q^i}+ \frac{\partial \dot p_i}{\partial p_i}\right ) + \frac{\partial \mu}{\partial q} \cdot  \dot q  
\end{equation*}
 is zero along $T^*\Ss^{n-1}$. In the calculations below  we will repeatedly use
that  the matrices  $\J$ and $C$ are symmetric and commute (which is obvious since they are diagonal).

First notice that \eqref{eq:RedEqnsGeneral} implies 
\begin{equation*}
 \sum_{i=1}^n   \frac{\partial \dot p_i}{\partial p_i} =   -2 \frac{\partial H}{\partial p}\cdot q = - 2\dot q \cdot q,
\end{equation*}
which vanishes along $T^*\Ss^{n-1}$. Next, by direct differentiation of $\mu$ and $\dot q^i$ defined by \eqref{eq:RedEqnsGeneral} one obtains
\begin{equation*}
\begin{split}
\frac{\partial \mu}{\partial q}& = \mu \left (- X(q) \J q - \frac{Cq}{q \cdot Cq} \right ), \\ 
  \sum_{i=1}^n  \frac{\partial \dot q^i}{\partial q^i}&= \left ( C \left (  -2\J + \frac{2(q \cdot \J Cq) -1 }{q \cdot Cq} \mbox{Id}_n \right ) q \right ) \cdot \dot q -
  X(q) \frac{q \cdot Cp}{q \cdot Cq},
 \end{split}
\end{equation*}
where $X(q):=\mbox{tr}(C) - \frac{q\cdot C^2 q}{q\cdot Cq}$. Therefore, along $T^*\Ss^{n-1}$ we may
write
\begin{equation}
\label{eq:aux-proof-inv-measure}
y(q,p)= - 2\mu  \left ( C \left (  \J + \frac{1- (q \cdot \J Cq) }{q \cdot Cq} \mbox{Id}_n \right ) q \right ) \cdot \dot  q - \mu X(q) 
 \left ( \J q \cdot \dot  q + \frac{q \cdot Cp}{q \cdot Cq} \right ).
\end{equation}

We now make use of the equality 
\begin{equation*}
\J C k \cdot q +(Ck \cdot q)(\J q \cdot q) = k\cdot q,
\end{equation*}
which holds for all $k\in \R^n$ and is easily established. By respectively applying it to $k=q$ and  $k=p$ shows that along $T^*\Ss^{n-1}$ we have
\begin{equation*}
q \cdot \J Cq =1-(q \cdot Cq) (q \cdot \J q), \qquad \mbox{and} \qquad q \cdot \J C p = -( q \cdot Cp) (q \cdot \J q).
\end{equation*}
Using these identities and the expression for $\dot q$ given in \eqref{eq:RedEqnsGeneral} one deduces the formulae
\begin{equation*}
  \J + \frac{1- (q \cdot \J Cq) }{q \cdot Cq} \mbox{Id}_n = C^{-1}, \qquad  \J q \cdot \dot  q =- \frac{q \cdot Cp}{q \cdot Cq},
\end{equation*}
that are valid along $T^*\Ss^{n-1}$. Therefore, \eqref{eq:aux-proof-inv-measure} implies
\begin{equation*}
\left . y(q,p) \right |_{T^*\Ss^{n-1}} = - 2\mu q \cdot \dot q =0.
\end{equation*}
\end{proof}

\begin{remark}\label{rmk:hamiltonization}  It
 is 
  unknown whether the reduced system   \eqref{eq:RedEqnsGeneral}  can be Hamiltonized by 
Chaplygin's reducing multiplier method. 
This would amount to the introduction  of a time reparametrization $dt = \nu(q) \, d\tau$, and a rescaling of the momenta $p\to \nu(q)p$, that would transform the
equations into a Hamiltonian system.  The factor $\nu(q)$, if it exists, is the so-called Chaplygin multiplier.
The specific form of the invariant measure implies that, up to a constant factor, the multiplier   $\nu(q)$ would be given by
\begin{equation*}
\nu(q) = \left ( \frac{\det C}{q\cdot Cq} \right )^{-\frac{1}{2(n-2)}}.
\end{equation*}
The details about the  method may be found in e.g.  \cite{FedJov,EhlersKoiller}.
 For $n=3$ the Hamiltonization is possible and follows from Chaplygin's last multiplier theorem \cite{ChapRedMult}.
For $n\geq 4$, using the above expression for $\nu(q)$, we were able to verify the Hamiltonization of the system for  axisymmetric tops (treated in Section \ref{sec:axisymmetric}). This in fact follows from the work of
 Fedorov and Jovanovi\'c  \cite{FedJov,FedJov2}. On the other hand, some preliminary  investigations of us suggest that the system is {\em not} Hamiltonizable for $n\geq 4$ and more general
mass tensor $\J$.
\end{remark}

\subsection{Steady rotations}

\begin{proposition} \label{prop:steady rotations on T^*S}
The (non-trivial) steady rotation solutions on $D$ described in Proposition \ref{prop:steady rotations on D} project to periodic solutions of the reduced system  \eqref{eq:RedEqnsGeneral} on $T^*\Ss^{n-1}$, where $q$ and $p$ rotate uniformly in a principal plane of the body.  The projected motion has the same period and the same energy as the motion on $D$.
\end{proposition}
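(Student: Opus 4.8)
The plan is to take a non-trivial steady rotation on $D$ as produced by Proposition~\ref{prop:steady rotations on D} --- that is, a motion $g(t)=g_0\exp(\Omega t)$ with $\Omega$ a constant rank-two matrix defining a rotation in a principal plane $\Pi$ --- and simply push it forward through the projection $(g,\Omega)\mapsto(q,p)$, where $q=g^{-1}e_1$ and $p=-\I(\Omega)q$. Since this map realises the $G_L$-reduction and \eqref{eq:RedEqnsGeneral} are by construction the equations governing the reduced dynamics, the projected curve $(q(t),p(t))$ is automatically a solution of \eqref{eq:RedEqnsGeneral}; I do not expect to have to verify this by direct substitution. The real content of the proposition is therefore to identify the qualitative shape of this solution (uniform rotation in $\Pi$), together with its period and energy.

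First I would compute $q(t)=g(t)^{-1}e_1=\exp(-\Omega t)\,q_0$ with $q_0=g_0^{-1}e_1$. By the geometric description obtained in the proof of Proposition~\ref{prop:steady rotations on D}, the initial orientation $g_0$ is chosen so that the distinguished axis lies in the plane of rotation, i.e.\ $q_0\in\Pi$. Consequently $q(t)$ is the uniform rotation of the unit vector $q_0$ inside the principal plane $\Pi$.

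The key step is to show that $p(t)$ likewise rotates uniformly in $\Pi$. For this I would use that on a principal plane the inertia operator acts as a scalar, $\I(\Omega)=\inertia(\Pi)\,\Omega$ (Section~\ref{sec:rigid body}). Combining this with $\dot q=-\Omega q$ from Lemma~\ref{lemma:q,p constant} gives
\[
p=-\I(\Omega)q=-\inertia(\Pi)\,\Omega q=\inertia(\Pi)\,\dot q .
\]
Thus $p$ is a fixed nonzero multiple of $\dot q$, which is the velocity vector of a uniform circular motion in $\Pi$; hence $p$ too lies in $\Pi$ and rotates uniformly, in step with $q$. This is the one point where it is essential that $\Pi$ is a \emph{principal} plane, and I expect it to be the crux of the argument --- everything else is bookkeeping.

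Finally, for the period I would note that $t\mapsto\exp(-\Omega t)q_0$ is a genuine rotation of a nonzero vector, so its minimal period equals that of the one-parameter group $\exp(t\Omega)$ acting in $\Pi$; since the nonzero eigenvalues of $\Omega$ are $\pm i\|\Omega\|$, this period is $2\pi/\|\Omega\|$, exactly the period on $D$ recorded in Proposition~\ref{prop:steady rotations on D}. The equality of energies is immediate, because the reduced energy $H(q,p)$ was shown in the proof of Proposition~\ref{prop:eqns-nD} to coincide with $\tfrac12\langle\I(\Omega),\Omega\rangle$, whose value is constant (and equal to $\tfrac12\inertia(\Pi)\|\Omega\|^2$) along the steady rotation.
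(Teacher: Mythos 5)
Your proposal is correct and follows essentially the same route as the paper: the crux in both is the identity $p=-\I(\Omega)q=\inertia(\Pi)\,\dot q$ on a principal plane, from which uniform rotation of $q$ and $p$ in $\Pi$, the period $2\pi/\|\Omega\|$, and the equality of energies all follow. The only cosmetic difference is that the paper reads off the period from the harmonic-oscillator form of the reduced equations ($\dot p=-2Hq$ with $p=(J_i+J_j)\dot q$), whereas you obtain it directly from the explicit rotation $q(t)=\exp(-\Omega t)q_0$; both give the same result.
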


We prove the stability of some of these steady rotations in Theorem\,\ref{thm:stability} below. 

\begin{proof}
 Let  $\J=\mbox{diag}[J_1,\dots, J_n]$ be the mass tensor and consider a steady
rotation solution with angular velocity $\Omega$.
 In view of Proposition\,\ref{prop:steady rotations on D}, by rotating the body frame if necessary, we may assume that  $\Omega=\omega f_i\wedge f_j$, where 
 $\omega = \| \Omega \| \in \R$.
 Then $p=-\I(\Omega)q=-(J_i+J_j)\Omega q=(J_i+J_j)\dot q$.
It follows from the second equation in \eqref{eq:RedEqnsGeneral} that the motion of $q$, $p$, is simple harmonic with period $2\pi\sqrt{(J_i+J_j)/2H}=2\pi/|\omega|$. 
Moreover, $q$ and $p$ are contained in the plane spanned by $f_i$ and $f_j$ in view of~\eqref{eq:Omqqdot}.
 That the energies are equal is obvious, by the definition of $H$.
\end{proof}

Consider the integral of motion  $P=\| p\|^2$ and the resulting energy-momentum map 
$$(P,H) : T^*\Ss^{n-1}\to \R^2.$$  
The following proposition refers to the singularities of this map.   Since $H$ and $P$ are both non-negative, the image lies in the (closed) positive quadrant $\R_+^2\subset\R^2$.  If $p=0$ then $(P,H)(q,0)=(0,0)$ and such points are all critical points of $(P,H)$.   As we see in the following proposition, the other critical points occur at the steady rotations.  

\begin{proposition} \label{prop:EM critical rays}
Let $\J = \diag[J_1, \dots, J_n]$ with $J_j>0$ for all $j$.  
\begin{enumerate}
\item The critical points of the energy-momentum map $(P,H)$ with $p\neq0$ occur at the steady rotations described above.
\item The critical values of the energy-momentum map $(P,H)$ are the rays $L_{ij}$ given by $P=2(J_i+J_j)H$, for $i<j$.

\end{enumerate}
\end{proposition}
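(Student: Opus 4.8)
The plan is to realise the energy--momentum map on the model $T^*\Ss^{n-1}\subset\R^{2n}$ cut out by $\phi_1:=\|q\|^2-1=0$ and $\phi_2:=p\cdot q=0$ of \eqref{eq:qp-conds}, and to characterise its critical points by Lagrange multipliers. A point is critical exactly when its differential has rank less than two, i.e. when there is a nonzero pair $(\alpha,\beta)$ and multipliers $(\gamma_1,\gamma_2)$ with $\alpha\,\nabla P+\beta\,\nabla H=\gamma_1\nabla\phi_1+\gamma_2\nabla\phi_2$ as gradients in $\R^{2n}$. Differentiating \eqref{eq:energy-general} one finds first that $\nabla_p H=\dot q$, the right-hand side of the first equation in \eqref{eq:RedEqnsGeneral}. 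Since $\nabla P=(0,2p)$, $\nabla\phi_1=(2q,0)$ and $\nabla\phi_2=(p,q)$, the $p$-component of the multiplier equation reads $2\alpha p+\beta\dot q=\gamma_2 q$, and pairing with $q$ (using $p\cdot q=\dot q\cdot q=0$) forces $\gamma_2=0$, hence $2\alpha p+\beta\dot q=0$. As $p\neq0$ this rules out $\beta=0$, so after rescaling the $q$-component gives $\nabla_q H\parallel q$. Thus, for $p\neq0$, criticality is equivalent to the two conditions
\[
\dot q\parallel p\qquad\text{and}\qquad\nabla_q H\parallel q.
\]

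The computational heart of the argument — and the step I expect to be the main obstacle — is the evaluation of $\nabla_q H$, since the matrix $C$ of \eqref{eq:Cmatrix} depends on $q$ through $\rho:=q\cdot\J q$. Using $\partial_{q_k}C=-2(\J q)_k\,C^2$ and writing $a:=p\cdot Cq$, $b:=q\cdot Cq>0$ so that $\dot q=Cp-(a/b)Cq$, a direct but careful calculation should collapse to the clean form
\[
\nabla_q H=\tfrac12\kappa\,\J q-\tfrac{a}{b}\,\dot q,\qquad \kappa:=-2\,p\cdot C^2p+\tfrac{4a}{b}\,p\cdot C^2q-\tfrac{2a^2}{b^2}\,q\cdot C^2q,
\]
the point being that the separate terms in $Cp$ and $Cq$ recombine into a single multiple of $\dot q$.

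With this formula in hand both directions of part~(i) are short. For the forward implication, $\dot q\parallel p$ inserted into $p=C^{-1}\dot q+(a/b)q$ (with $C^{-1}=\J+\rho\,\mathrm{Id}_n$) shows $\J p\in\mathrm{span}\{q,p\}$; and $\nabla_q H\parallel q$ together with the displayed formula shows $\J q\in\mathrm{span}\{q,\dot q\}=\mathrm{span}\{q,p\}$, after checking $\kappa\neq0$ (if $\kappa=0$ then $\nabla_q H\parallel q$ forces $a=0$, whence $\kappa=-2\,p\cdot C^2p<0$, a contradiction). Hence $\mathrm{span}\{q,p\}$ is $\J$-invariant, so it is a principal plane and $\Omega=q\wedge\dot q$ is a rotation in it; by Proposition~\ref{prop:steady rotations on D} this is a steady rotation. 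For the converse I would avoid recomputing $\nabla_q H$ by exploiting the identity $\nabla_q H\cdot\dot q=0$, which follows from conservation of $H$ (differentiating $H$ along \eqref{eq:RedEqnsGeneral} and using $\nabla_p H=\dot q$, $\dot p=-2Hq$ and $q\cdot\dot q=0$). At a steady rotation $\dot q\parallel p$ and $\mathrm{span}\{q,p\}$ is $\J$-invariant, so the displayed formula gives $\nabla_q H\in\mathrm{span}\{q,p\}$; being orthogonal to $\dot q\parallel p$, and with $p\perp q$, it must be parallel to $q$. Thus both criticality conditions hold.

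Finally, part~(ii) is just the evaluation of $(P,H)$ along the steady rotations identified in part~(i). For a steady rotation in the principal plane $\Pi$ with $\inertia(\Pi)=J_i+J_j$, Proposition~\ref{prop:steady rotations on T^*S} gives $p=(J_i+J_j)\dot q$, and the relation $2H=p\cdot\dot q$ (from the proof of Proposition~\ref{prop:eqns-nD}) then yields $2H=(J_i+J_j)\|\dot q\|^2$ while $P=\|p\|^2=(J_i+J_j)^2\|\dot q\|^2$; eliminating $\|\dot q\|^2$ gives $P=2(J_i+J_j)H$, i.e. the ray $L_{ij}$. Every such ray is attained since steady rotations in $\Pi$ exist for every amplitude, and the points with $p=0$ map to the common apex $(0,0)$, so the set of critical values is precisely the union of the rays $L_{ij}$.
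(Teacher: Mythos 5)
Your proof is correct and follows essentially the same route as the paper's: Lagrange multipliers on the ambient $\R^{2n}$ with the constraints \eqref{eq:qp-conds}, the identity $\nabla_p H=\dot q$, and a gradient formula for $\nabla_q H$ (your $\tfrac12\kappa\,\J q-\tfrac{a}{b}\dot q$ is exactly the paper's $-\|v\|^2\J q-\tfrac{p\cdot Cq}{q\cdot Cq}v$), leading to $\dot q\parallel p$ and $\J$-invariance of $\mathrm{span}\{q,p\}$. The only genuine addition is that you explicitly verify the converse (steady rotations are critical) via conservation of $H$, a step the paper leaves implicit but which is needed for the critical values in part (ii) to be the full rays.
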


The image and critical values of the energy-momentum map are illustrated in Figure\,\ref{fig:E-M for Veselova}.  Let us emphasize that, in the case of eigenvalues with multiplicity, this statement does not assume $J_i\neq J_j$, only that $i\neq j$. 
Moreover, recall from Section~\ref{sec:rigid body} that a rotation of the body in the principal plane $\Pi_{i,j}$ has angular velocity $\Omega$ equal to a multiple of $f_i\wedge f_j$, and this is an eigenvector of the inertia operator with eigenvalue $\inertia(\Pi_{i,j})=J_i+J_j$.

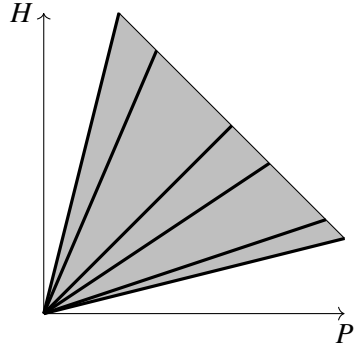
\begin{figure}
\centering
\begin{tikzpicture}[scale=2.5]
 \draw[->] (0,0) -- (1.6,0) node[anchor=north]{$P$};
 \draw[->] (0,0) -- (0,1.6) node[anchor=east]{$H$};
  \draw[fill=lightgray] (0,0) -- (0.4,1.6)  -- (1.6,0.4);
  \draw[very thick] (0,0) -- (0.4,1.6); 
  \draw[very thick] (0,0) -- (0.6,1.4); 
  \draw[very thick] (0,0) -- (1,1); 
  \draw[very thick] (0,0) -- (1.2,0.8); 
  \draw[very thick] (0,0) -- (1.5,0.5); 
  \draw[very thick] (0,0) -- (1.6,0.4); 
\end{tikzpicture}
\begin{minipage}{0.9\textwidth}
\caption{The image and critical rays $L_{ij}$ (shown as dark lines, see Proposition \ref{prop:EM critical rays}) of the energy-momentum map for the Veselova top. The slopes of the critical rays as shown are equal to $[2(J_i+J_j)]^{-1}$.} 
\label{fig:E-M for Veselova}
\end{minipage}
\end{figure}

\begin{proof}

(i) 
Let $(q_0,p_0)\in T^*\Ss^{n-1}$ with $p_0\neq0$. Writing $H_0=H(q_0,p_0)$ and $P_0=\|p_0\|^2$, this implies $H_0>0$ and $P_0>0$. At such points, neither $dH=0$ nor $dP=0$.  Thus, considering $H$ and $P$ as functions on the ambient space $\R^n\times\R^n$, the condition for a critical point at $(q_0,p_0)$ is that the gradients of $H$ and $P$ are linearly dependent (on $T^*\Ss^{n-1}$), and using Lagrange multipliers for the constraints defining this cotangent bundle gives
\begin{equation}
\label{eq:LagMult1}
\nabla H = \lambda_1 \nabla P + \lambda_2 \nabla (p\cdot q)+\lambda_3 \nabla \|q\|^2, 
\end{equation}
and $(q_0,p_0)$ are such that 
\begin{equation}
\label{eq:CondExtrema}
q_0\cdot p_0=0, \qquad
\|q_0\|^2=1, \qquad
H(q_0,p_0)=H_0, \qquad \|p_0\|^2=P_0.
\end{equation}
At such points,  \eqref{eq:LagMult1} is equivalent to 
\begin{equation}
\label{eq:LagMult1bis}
\nabla H(q_0,p_0) = 2\lambda_1(0,p_0) + \lambda_2 (p_0,q_0)+2\lambda_3(q_0,0). 
\end{equation}
Denote by
\begin{equation*}
v (q,p):=\dot q= Cp-  \frac{p\cdot Cq}{q\cdot Cq}Cq.
\end{equation*}
One calculates 
\begin{equation*}
\frac{\partial H}{\partial p} =v, \qquad \frac{\partial H}{\partial q}=-\|v\|^2 \J q - \frac{(p\cdot Cq)}{(q\cdot Cq)}v.
\end{equation*}
Then \eqref{eq:LagMult1bis} can be rewritten as 2 equations,
\begin{subequations}
\label{eq:LagMult2}
\begin{align}
\label{eq:LagMult2a}
v_0 & = 2\lambda_1 p_0 + \lambda_2 q_0, \\
\label{eq:LagMult2b}
 - \left(\frac{p_0\cdot Cq_0}{q_0\cdot Cq_0}\right)\,v_0   -\|v_0\|^2 \J q_0 &= \lambda_2 p_0 + 2\lambda_3 q_0,
\end{align}
\end{subequations}
where $v_0=v(q_0,p_0)$. 
Taking the  inner product on both sides of equation \eqref{eq:LagMult2a} 
with $q_0$ and $p_0$, and using \eqref{eq:CondExtrema}, leads to
the relations:
\begin{equation} \label{eq:Lagrange multipliers 1,2}
\lambda_2=0, \qquad \lambda_1=\frac{H_0}{P_0}.
\end{equation}
Substituting this into \eqref{eq:LagMult2b} and taking the inner product with $q_0$ gives
$\lambda_3 = -\frac12\|v_0\|^2(q_0\cdot\J q_0).$
Putting first $\lambda_2=0$ in \eqref{eq:LagMult2a} shows that $v_0=2(H_0/P_0)p_0$, and secondly taking the inner product of \eqref{eq:LagMult2b} with $p_0$ shows
$$\lambda_3=-2\frac{H_0^2}{P_0}(q_0\cdot\J q_0),\quad\text{and}\quad 
\frac{p_0\cdot Cq_0}{q_0\cdot Cq_0}=-\frac{2H_0}{P_0}(p_0\cdot \J q_0).
 $$
Now consider $\J q_0$ and $\J p_0$.  First from \eqref{eq:LagMult2a}, $v_0=(2H_0/P_0)p_0$ and then from \eqref{eq:LagMult2b} we deduce that $\J q_0$ is a linear combination of $q_0$ and $p_0$. Again from \eqref{eq:LagMult2a}, substituting for the definition of $v_0$, and using the definition of the matrix $C$ one finds
$$\J p_0 = \left(\frac{P_0}{2H_0}-(q_0\cdot\J q_0)\right)p_0 + \frac{P_0}{2H_0}\left(\frac{p_0\cdot Cq_0}{q_0\cdot Cq_0}\right)q_0,$$
which is also a linear combination of $p_0$ and $q_0$.  
In this way we have shown that the plane spanned by $q_0,p_0$ is invariant under $\J$, which is precisely the condition for it to be a principal plane. 

Finally, if $p_0,q_0$ lie in a principal plane, and $p_0\neq0$, they define a steady rotation  with $\Omega= \frac{2H_0}{P_0}q_0\wedge p_0$
as is seen from Equations~\eqref{eq:Omqqdot}, \eqref{eq:LagMult2a} and \eqref{eq:Lagrange multipliers 1,2}.

(ii) Without loss of generality, we may assume that the body frame $\{f_1, \dots , f_n\}$ is chosen such that $\J=\diag[J_1, \dots J_n]$ and
$\mbox{span} \{q_0, p_0\} =\mbox{span} \{f_i, f_j\} $ ($i\neq j$). We may then write
\begin{equation*}
q_0=\cos \alpha f_i + \sin \alpha f_j, \qquad p_0= \sqrt{P_0}\left ( -\sin \alpha f_i + \cos \alpha f_j \right ),
\end{equation*}
for a certain $\alpha \in [0,2\pi)$.  The relation $P_0=2H_0(J_i+J_j)=2H_0\inertia(\Pi_{i,j})$ follows from Proposition\,\ref{prop:steady rotations on D}, since $P=\|M\|^2$. 
\end{proof}

\paragraph{Stability of steady rotations.} Recall that for a dynamical system, a compact subset $S$  of the phase space is Lyapunov stable if
 for each neighbourhood $U$ of $S$ there is a neighbourhood $V$ such that any trajectory that enters $V$ lies entirely in $U$.  For the free $n$-dimensional rigid body, Izosimov \cite{Izosimov} has produced a fairly complete analysis 
 of the stability of the relative equilibria.  In particular, in Example 2.3 he shows that if $\J$ has simple eigenvalues, with $J_1<J_2<\dots<J_n$ then the 2-dimensional rotations in the principal plane $\Pi_{i,j}$ are (Lyapunov) stable if and only if $|i-j|=1$ (that is, if $J_i$ and $J_j$ are adjacent in the ordering).  It is not known whether the same result holds with the Veselova constraint.  However, we do have the following result which follows from the geometry of the energy-momentum map.  We do not require the eigenvalues of $\J$ to be simple (motivated by the symmetric bodies considered in later sections).

Recall that if $\Pi$ is a principal plane of inertia then there is a corresponding moment of inertia, which we denote $\inertia(\Pi)$; if $\Pi=\Pi_{i,j}$ for some principal basis then $\inertia(\Pi_{i,j})=J_i+J_j$.

\begin{definition}\label{def:extremal}
A principal plane $\Pi$ is \defn{extremal} if either $\inertia(\Pi)\leq\inertia(\Pi')$ for all principal planes $\Pi'$ (in which case it is minimal) or $\inertia(\Pi)\geq\inertia(\Pi')$  for all principal planes $\Pi'$ (in which case it is maximal).
\end{definition}

If the eigenvalues of $\J$ are all simple and ordered as usual, then the only minimal principal plane is $\Pi_{1,2}$ and the only maximal one is $\Pi_{n-1,n}$.  
If some of the eigenvalues of $\J$ are not simple, then the dynamics has a further symmetry $G_R$ described by Equation\,\eqref{eq:def-G_R}, and one does not in that case expect a steady rotation to be Lyapunov stable.  Instead the natural notion is $G_R$-Lyapunov stable, which is defined by saying that a $G_R$-invariant subset $S$ of phase space is $G_R$-Lyapunov stable if for every $G_R$-invariant neighbourhood $U$ of $S$ there is a $G_R$-invariant neighbourhood $V$ of $S$ such that any trajectory that intersects $V$ is entirely contained in $U$. 

\begin{theorem} \label{thm:stability}
Steady rotations in extremal principal planes are $G_R$-Lyapunov stable. 
\end{theorem}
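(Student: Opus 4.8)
The plan is to prove the $G_R$-Lyapunov stability by the energy--momentum (Lyapunov function) method. The geometric idea, visible in Figure~\ref{fig:E-M for Veselova}, is that the steady rotations in an extremal principal plane are exactly those mapping to one of the two bounding rays of the wedge-shaped image of the energy--momentum map $(P,H)$, where $P=\|p\|^2$ is the integral of Proposition~\ref{prop:conservation of P}. Consequently a suitable conserved linear combination of $H$ and $P$ should be sign-definite and vanish precisely along such a steady rotation, so it can serve as a Lyapunov function. Since $H$ and $P$ are both $G_R$-invariant (Corollary~\ref{coroll:symmetry on T^*S}), so is this combination, and this is what yields the \emph{$G_R$-}Lyapunov conclusion. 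I would carry out the argument on the reduced space $T^*\Ss^{n-1}$ for the projected steady rotations of Proposition~\ref{prop:steady rotations on T^*S}; since the $G_L$-action of the compact group $\OO(n-1)$ is proper, equivariance of the reconstruction then transfers the statement back to $D$.

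The key step is a \emph{global} inequality. Write $J_-=\min_{i<j}(J_i+J_j)$ and $J_+=\max_{i<j}(J_i+J_j)$, and treat the minimal case $\inertia(\Pi)=J_-$ (the maximal case being identical with $J_-,J_+$ interchanged and the inequality reversed). I claim that $2J_-H\le P$ everywhere on $T^*\Ss^{n-1}$, with equality exactly at the steady rotations in minimal principal planes. To see this, fix $P_0>0$: the level set $\{P=P_0\}$ is the cotangent sphere bundle of $\Ss^{n-1}$, hence \emph{compact}, so the smooth function $H$ attains a maximum on it at some constrained critical point of $(P,H)$. By Proposition~\ref{prop:EM critical rays} every critical point with $p\ne0$ is a steady rotation lying on a ray $P_0=2(J_i+J_j)H$, and the largest value of $H$ among these is $P_0/(2J_-)$, realised by the steady rotations in minimal planes. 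Hence $\max_{\{P=P_0\}}H=P_0/(2J_-)$, which is precisely the asserted inequality and its equality case (the case $p=0$ being excluded, as it forces $P=0$).

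With this in hand I would introduce the conserved, $G_R$-invariant function
$$W=(P-P_0)^2+(P-2J_-H),$$
where $P_0$ is the momentum of the chosen steady rotation. The key step gives $W\ge0$, with $W=0$ exactly on the compact set $\Sigma_-$ of all steady rotations in minimal principal planes with $P=P_0$. Restricting to the compact slab $\{|P-P_0|\le1\}$ to bypass the non-compactness of $T^*\Ss^{n-1}$, the standard Lyapunov argument applies: given a $G_R$-invariant neighbourhood $U$ of $\Sigma_-$, the positive minimum $\delta$ of $W$ on (slab)$\,\setminus U$ yields the invariant sublevel neighbourhood $\{W<\delta\}\subseteq U$, which is $G_R$-invariant because $W$ is. This shows $\Sigma_-$ is $G_R$-Lyapunov stable. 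Finally, the $G_R$-orbit $S$ of a single steady rotation is open and closed in $\Sigma_-$ (distinct steady-rotation orbits are separated by their plane or sense of rotation), so for small $\delta$ the set $\{W<\delta\}$ splits and the piece containing $S$ is the required invariant neighbourhood, giving $G_R$-Lyapunov stability of $S$.

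The main obstacle is establishing the global sign-definiteness of $P-2J_-H$: a direct computation from the $(q,p)$-expressions for $H$ and $P$ is unpleasant, so the clean route is the compactness-plus-critical-point argument above, which uses Proposition~\ref{prop:EM critical rays} to locate all extrema of $H$ on each momentum sphere. A secondary point needing care is the passage from stability of the full zero set $\Sigma_-$ to that of the individual orbit $S$, i.e.\ verifying that $S$ is a union of connected components of $\Sigma_-$; here one must invoke the precise structure of the minimal principal planes and the $G_R$-action ($G_R$ acting transitively on them in the symmetric cases, and $\Sigma_-$ reducing to the two rotation-sense circles when the spectrum is simple).
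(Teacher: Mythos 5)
Your proof is correct, but it establishes the crucial definiteness step by a genuinely different route from the paper. The paper also uses a conserved energy--momentum combination, namely $f=H-\lambda P-\tfrac12(P-c)^2$ with $\lambda=(2\inertia(\Pi))^{-1}$, but verifies its sign-definiteness \emph{locally}: it shows $df$ vanishes along the orbit $S$ and then computes the transverse Hessian explicitly in an adapted basis, with a case analysis on the multiplicities of the eigenvalues of $\J$ to identify a correct transversal to $S$ inside the group directions. You instead prove the \emph{global} inequality $2J_-H\le P$ on $T^*\Ss^{n-1}$ by compactness of the momentum spheres $\{P=P_0\}$ together with Proposition~\ref{prop:EM critical rays}, which locates every constrained extremum of $H$ on such a sphere at a steady rotation on a ray $P=2(J_i+J_j)H$; sign-definiteness of $W$ is then automatic. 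Both arguments ultimately rest on Proposition~\ref{prop:EM critical rays} and on the $G_R$-invariance of $H$ and $P$ (Corollary~\ref{coroll:symmetry on T^*S}). What each buys: the paper's Hessian computation gives local spectral information transverse to $S$ (and would be the starting point for any instability statement for non-extremal planes), whereas your global argument sidesteps the Hessian and the multiplicity case analysis entirely, handles degenerate spectra uniformly, and yields as a by-product the bounds $2J_-H\le P\le 2J_+H$ that determine the wedge in Figure~\ref{fig:E-M for Veselova}. Two small points: your final step reducing from $\Sigma_-$ to the single orbit $S$ is in fact vacuous, since $G_R$ acts transitively on the set of minimal principal planes and contains an orientation-reversing element of each such plane, so $S=\Sigma_-$ in all cases; and, like the paper, you implicitly assume the steady rotation is non-trivial ($P_0>0$), which is the intended reading of the theorem.
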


If as above, the eigenvalues are simple then the statement reduces to saying the steady rotations in extremal principal planes are Lyapunov stable. 
 Before the proof of the theorem, we state the following immediate consequence.
\begin{corollary} \label{cor:stability}
The steady rotations of the  $3D$ Veselova top about the largest and smallest principal axes of inertia are Lyapunov stable
($\OO(2)$-Lyapunov stable for axisymmetric tops).
\end{corollary}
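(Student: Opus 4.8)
The plan is to prove $G_R$-Lyapunov stability by the energy--momentum (Dirichlet) method, exploiting the two conserved quantities $P=\|p\|^2$ and $H$ together with the precise description of the critical set of $(P,H)$ obtained in Proposition~\ref{prop:EM critical rays}. The guiding observation is the geometry of Figure~\ref{fig:E-M for Veselova}: the image of $(P,H)$ is a cone bounded by the two \emph{extremal} rays, and a steady rotation in an extremal plane sits on the boundary of this cone. A conserved function that is pinned to one side of the corresponding boundary ray will then serve as a Lyapunov function.

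First I would record what is needed about the image of $(P,H)$. Fix $P_0>0$. Since $P_0$ is a regular value of $p\mapsto\|p\|^2$ on $T^*\Ss^{n-1}$ (as $\nabla P=(0,2p)\neq0$ when $p\neq0$), the level set $S_{P_0}=\{(q,p):\|p\|^2=P_0\}$ is a compact manifold, so $H|_{S_{P_0}}$ attains its maximum and minimum. Any such extremum is a critical point of $H|_{S_{P_0}}$, hence a point where $\nabla H$ and $\nabla P$ are dependent on $T^*\Ss^{n-1}$, i.e.\ a critical point of $(P,H)$; by Proposition~\ref{prop:EM critical rays}(i) it is a steady rotation, with value $P_0/(2\inertia(\Pi))$ on the plane $\Pi$ (Proposition~\ref{prop:steady rotations on D}). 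Since every principal plane carries such a steady rotation at level $P_0$, comparing values shows that the maximum of $H$ on $S_{P_0}$ equals $P_0/(2\inertia_{\min})$ and is attained \emph{exactly} on steady rotations in minimal planes, while the minimum equals $P_0/(2\inertia_{\max})$ on the maximal planes. Letting $P_0$ vary gives the global bounds
\begin{equation*}
\frac{P}{2\,\inertia_{\max}}\;\le\;H\;\le\;\frac{P}{2\,\inertia_{\min}}\qquad\text{on } T^*\Ss^{n-1},
\end{equation*}
where $\inertia_{\min}$ and $\inertia_{\max}$ denote the smallest and largest moments of inertia.

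Now I would treat the minimal case (the maximal case is identical with the reversed inequality). Let $S\subset S_{P_0}$ be the set of steady rotations in minimal principal planes with $\|p\|^2=P_0$; it is compact and, since $H$ and $P$ are $G_R$-invariant (Corollary~\ref{coroll:symmetry on T^*S}, Proposition~\ref{prop:eqns-nD}), it is $G_R$-invariant and contains the $G_R$-orbit of the given steady rotation. Consider
\begin{equation*}
\Psi \;=\; \Big(\tfrac{1}{2\inertia_{\min}}\,P-H\Big)\;+\;(P-P_0)^2 .
\end{equation*}
Both $P$ and $H$ are conserved (Proposition~\ref{prop:conservation of P} and the energy integral of Proposition~\ref{prop:eqns-nD}), so $\Psi$ is conserved; it is $G_R$-invariant; and by the bound above both summands are non-negative, so $\Psi\ge0$ with $\Psi=0$ precisely when $P=P_0$ and $H=P_0/(2\inertia_{\min})$, i.e.\ exactly on $S$. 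Thus $\Psi$ is a $G_R$-invariant, flow-invariant function vanishing exactly on $S$, and the sublevel sets $\{\Psi<\varepsilon\}$ are open, $G_R$-invariant and invariant under the flow. The standard Dirichlet argument then yields $G_R$-Lyapunov stability of $S$, hence of the steady rotation, provided these sublevel sets form a neighbourhood basis of $S$.

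The main technical point, and the step I expect to require the most care, is exactly this last coercivity claim: that $\{\Psi<\varepsilon\}$ shrinks to $S$ as $\varepsilon\to0$, given that $S$ is not a single point but a compact $G_R$-invariant set and that $T^*\Ss^{n-1}$ is non-compact. The term $(P-P_0)^2$ is what saves the argument: if some sequence $x_k$ outside a fixed $G_R$-invariant neighbourhood $U$ of $S$ had $\Psi(x_k)\to0$, then $P(x_k)\to P_0$, so the $x_k$ eventually lie in a fixed compact neighbourhood of a $P$-level set; a convergent subsequence would have limit in $S\subset U$, a contradiction. This produces a $G_R$-invariant trapping neighbourhood $V=\{\Psi<\varepsilon\}$ inside $U$ and proves the theorem. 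Finally, when the eigenvalues of $\J$ are simple the group $G_R$ is finite and $G_R$-Lyapunov stability reduces to ordinary Lyapunov stability of the periodic orbit; specialising to $n=3$ with the extremal planes $\Pi_{1,2}$ and $\Pi_{n-1,n}$ gives Corollary~\ref{cor:stability}.
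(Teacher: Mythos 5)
Your proposal is correct in substance, but it reaches Theorem~\ref{thm:stability} (of which the corollary is the $n=3$ specialisation) by a genuinely different route from the paper. The paper's proof is local and second-order: it takes $f=H-\lambda P-\tfrac12(P-c)^2$, verifies $df=0$ along the $G_R$-orbit $S$ of the trajectory, and then computes the Hessian of $f$ transverse to $S$ explicitly in an adapted basis, showing it is negative definite exactly when the plane is extremal (the blocks $A_k=\diag[J_1-J_k,\,J_2-J_k]$). You avoid the Hessian entirely by a global, zeroth-order argument: compactness of the level sets $\{P=P_0\}$ plus Proposition~\ref{prop:EM critical rays}(i) and the value $H=P/(2\inertia(\Pi))$ at a steady rotation force the global bounds $P/(2\inertia_{\max})\le H\le P/(2\inertia_{\min})$, so your $\Psi$ is globally non-negative with zero set the extremal steady rotations at level $P_0$; conservation plus the $(P-P_0)^2$ coercivity term then give trapping neighbourhoods. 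This is softer and shorter, and as a by-product re-derives in general the inequality that the paper only obtains by direct computation in the axisymmetric case (Section~\ref{sec:axisymmetric}); what it gives up is the local information in the Hessian (definiteness transverse to the group orbit), which the paper's computation provides and which would be needed, say, to discuss instability of non-extremal rotations. One step you should make explicit: your $S$ is the set of \emph{all} minimal-plane steady rotations with $\|p\|^2=P_0$, and you only observe that it \emph{contains} the $G_R$-orbit of the given trajectory; stability of a strictly larger invariant set does not by itself yield $G_R$-Lyapunov stability of that orbit. The gap closes because $S$ in fact \emph{equals} the $G_R$-orbit of the trajectory: the minimal principal planes form a single $G_R$-orbit (they either all lie in one eigenspace of $\J$, on which $\OO(m)\subset G_R$ acts transitively on $2$-planes, or are spanned by one eigenvector from each of the two smallest eigenspaces), and the two senses of rotation within a fixed plane are exchanged by a reflection such as $\diag[-1,1,\dots,1]$, which lies in $G_R$. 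With that observation added (or, alternatively, a connectedness argument separating the components of $\{\Psi<\varepsilon\}$), your proof is complete, and the corollary follows exactly as in the paper by specialising to $n=3$ and the planes $\Pi_{1,2}$ and $\Pi_{2,3}$.
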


\begin{proof}[Proof of Theorem~\ref{thm:stability}]
We shall prove this for minimal principal planes of inertia, and leave the maximal case to the reader. We define a function on the phase space which, under the hypotheses of the theorem, is a Lyapunov function.  Let $(q_0,p_0) \in \Pi$ be a point of steady rotation, and let $f:T^*\Ss^{n-1}\to\R$ be defined by 
$$f(q,p) = H-\lambda P - \tfrac12(P-c)^2,$$
where $\lambda=(2\inertia(\Pi))^{-1}$ and $c>0$ is a constant to be fixed.   This function is clearly invariant under the dynamics, and is moreover invariant under the action of $G_R$.  Let $S$ be the $G_R$-orbit of the trajectory through $(q_0,p_0)$.  We show that (i) $f$ has a critical point at every point of $S$, and (ii) under the condition that $\Pi$ is minimal, the Hessian matrix of $f$ transverse to  $S$
 is negative definite. It then follows that for $\varepsilon>0$ the set
$$V_\varepsilon := \{(q,p)\in T^*\Ss^{n-1} \mid f(q,p) > f(q_0,p_0)- \varepsilon\}$$
is a $G_R$-invariant neighbourhood of $S$, and that any $G_R$-invariant neighbourhood $U$ of $S$ contains $V_\varepsilon$ for some $\varepsilon>0$, and we can deduce the $G_R$-Lyapunov stability of $S$.

For (i), since $f$ is $G_R$-invariant, it suffices to prove it has a critical point at any point of the trajectory.  Now, writing $T_0$ for the tangent space $T_0=T_{(q_0,p_0}(T^*\Ss^{n-1})$, we have
$$df = (dH-\lambda dP)_{\restr{T_0}} - (P-c)dP_{\restr{T_0}}.$$
The first term vanishes by the proof of Proposition\,\ref{prop:EM critical rays} above, and in particular the value  of $\lambda$ is given by $\lambda_1$ in that proof---see \eqref{eq:Lagrange multipliers 1,2}.  The second term vanishes when $P=c$, so from now on we assume $c=\|p_0\|^2.$

We now wish to compute the Hessian of $f$ at $(q_0,p_0)$ in directions transverse to the submanifold $S$. Let us choose a principal basis so that $\Pi=\mathrm{span}\{f_1,f_2\}$, in which case
$$q_0=\cos\alpha f_1+\sin\alpha f_2,\quad p_0=\|p_0\|(-\sin\alpha f_1+\cos\alpha f_2).$$
We can further simplify the calculations by noting that since $f$ is invariant under the dynamics, the signature of the Hessian will also be invariant (by Sylvester's law of inertia), so we can suppose $\alpha=0$, and we take
\begin{equation}\label{eq:q0 & p0}
q_0= f_1,\quad p_0=\|p_0\| f_2.
\end{equation}

Now we consider a slice $V$ (of dimension $2n-3$) to the trajectory in the tangent space $T_0$. With $(q_0,p_0)$ as above, and putting $\hat p = (0,f_2)$ (the radial direction in $p$) we can choose the following basis for $V$:
$$\left\{\hat p,\;(af_3,0),\;( 0, b_3f_3),\dots,\;(af_n,0),\;(0, b_nf_n)\right\}$$
(in $\R^n\oplus\R^n$), where $b_k = \sqrt{(J_1+J_2)(J_1+J_k)}$ and $a=\omega^{-1}$, where $\omega$ is the frequency of the steady rotation.
With respect to this basis, one finds the Hessian matrix of $f$ at $(q_0,p_0)$ to be in block form,
$$\left[\begin{array}{c|cccccc}
- 8\omega^2(J_1+J_2)^2 &0&0 &0& \dots &&0\cr
\hline
0&A_3&0&0&\dots&&0 \cr
0&0&A_4&0&\dots&&0 \cr
0 & 0&0&A_5&\dots &&0\cr
\vdots&\vdots&\vdots&&\ddots&&\vdots\cr
0&0&0&&\dots&&A_n
\end{array}\right],$$
where the $2\times2$ matrix $A_k$ is given simply by
\begin{equation}\label{eq:stability matrices}
A_k \ = \ \begin{pmatrix}J_1-J_k & 0 \\
0& J_2-J_k
\end{pmatrix}.
\end{equation}
The remainder of the proof is simply combining the signs of these eigenvalues with the subspace of $V$ that suffices for a transversal to $S$.

Firstly, note that if all the eigenvalues $J_k$ are distinct, then the Hessian matrix is negative definite if and only if $J_k>\max\{J_1,J_2\}$ for all $k\geq3$, which is precisely the condition for a minimal principal plane. Indeed this holds more generally, provided $\max\{J_1,J_2\} <\min\{J_k\mid k\geq3\}$  (whether or not $J_1=J_2$ or some of the $J_k$ for $k\geq3$ are multiple).

Now assume $J_1\leq J_2 = J_3$ (which covers all remaining cases).  

First consider the case where $J_1=J_2$ and suppose this has multiplicity $m\geq3$. Let $E_1$ be the $m$-dimensional eigenspace of $\J$ with eigenvalue $J_1$.  The tangent space to the $\OO(m)$-group orbit through $(q_0,p_0)=(f_1,\|p_0\|f_2)$ is  
$$\g\cdot(q_0,p_0) = \{(\xi f_1,\|p_0\|\xi f_2)\mid \xi\in\so(m)\}.$$
Since $m\geq 3$ it follows that 
$$\g\cdot(q_0,p_0) = \R\{(f_2,-\|p_0\|f_1)\} \oplus (F_1\times\{0\}) \oplus (\{0\}\times F_1),$$
where $F_1$ is the span of $f_3,\dots,f_m$ in $E_1$.  The first component here is the tangent space to the trajectory, so is not required in $V$, and then 
$$T_0=\R(\dot q_0,\dot p_0) \oplus V = \g\cdot(q_0,p_0) \oplus V_1$$
where $V_1$ has basis
$$\left\{\hat p,\;(af_{r+1},0),\;( 0, b_{r+1}f_{r+1}),\dots,\;(af_n,0),\;(0, b_nf_n)\right\},$$
using the notation introduced above.  It is clear that the restriction to this space of the Hessian matrix given above is again negative definite.

Finally suppose $J_1<J_2=J_3$, and let $m_2$ be the multiplicity of $J_2$, and $E_2$ the eigenspace. Since $J_1$ is simple there is only a finite group acting on that space, so that 
$$\g\cdot (q_0,p_0) = \{0\}\times E_2$$
where $E_2$ is the span of $\{f_3,\dots,f_{m+1}\}\subset E_2$. Again writing, 
$$T_0=\R(\dot q_0,\dot p_0) \oplus V = \R(\dot q_0,\dot p_0) \oplus \g\cdot(q_0,p_0) \oplus V_2$$
we have $V_2$ is spanned by $\hat p$ and $(f_r,0)$ ($r=3,\dots,m+1$) and by $(f_k,0),(0,f_k)$ for $k>m+1$. The eigenvalues of the Hessian above that are relevant are just the first (for the space spanned by $\hat p$) and the first element of each $A_k$ (for $k=3,\dots,m+1$, which are all negative), and the remaining $2\times 2$ blocks, which are all negative definite. 
\end{proof}

\section{The axisymmetric Veselova top}  \label{sec:axisymmetric}

This section considers \defn{axisymmetric tops} for which the mass tensor  relative to a frame $\{f_1, \dots f_n\}$ takes the form
\begin{equation}\label{eq:axisymmetric J}
\J = \diag[J_1,J_2,\dots,J_2].	
\end{equation}
From its definition in~\eqref{eq:def-G_R},  it follows that the symmetry group $G_R=\OO(n-1)$, which corresponds
to rotations and reflections of the body frame that fix the symmetry axis.

In order to analyse the  \defn{second reduced space} $\cR:=D/G = T^*\Ss^{n-1}/G_R$, we require the invariants for the action of $G_R=\OO(n-1)$ on $T^*\Ss^{n-1}$.  
Now, it is well-known that the ring of invariants for $\OO(n-1)$ acting on $(q_2,p_2) \in \R^{n-1}\times\R^{n-1}$ is generated by three independent quadratic expressions:
$$\|q_2\|^2,\quad \|p_2\|^2,\quad\text{and}\quad q_2\cdot p_2.$$
By the Cauchy-Schwarz inequality, these satisfy $|p_2\cdot q_2|^2 \leq\|q_2\|^2\|p_2\|^2$.

It follows that the ring of invariants for $\OO(n-1)$ acting on $(q_1,q_2,p_1,p_2) \in \R\times\R^{n-1}\times\R\times\R^{n-1}$ is generated by the 5 invariants
$$q_1,\quad p_1,\quad\|q_2\|^2,\quad \|p_2\|^2,\quad q_2\cdot p_2.$$

We now restrict to $T^*\Ss^{n-1}$ by requiring $\|q\|^2=q_1^2+\|q_2\|^2=1$ and $q\cdot p = q_1p_1+q_2\cdot p_2=0$.
In place of 5 invariants, we now have 3:
$$q_1,\quad p_1,\quad \text{and}\quad P:=\|p\|^2=p_1^2+\|p_2\|^2.$$

\begin{proposition} \label{prop:axisymmetric R}
The orbit space $\mathcal{R}=T^*\Ss^{n-1}/\OO(n-1)=D/G$ is a stratified semi-algebraic subspace  of $\R^3$ given by
\begin{equation*}
\mathcal{R}=\{(q_1,p_1, P)\in \R^3 \, : \, p_1^2\leq (1-q_1^2)P, \;\; -1\leq q_1 \leq 1 \, \}.
\end{equation*}
\end{proposition}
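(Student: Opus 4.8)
The plan is to identify $\mathcal{R}$ as the image of $T^*\Ss^{n-1}$ under the Hilbert map sending $(q,p)$ to the triple of invariants $(q_1,p_1,P)$, and then to determine exactly which triples arise. This is a two-part task: first show that every point of $\mathcal{R}$ satisfies the stated inequalities, and second show that conversely every triple satisfying them is actually attained by some $(q,p)\in T^*\Ss^{n-1}$, so that the semi-algebraic set described is neither too big nor too small.

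First I would establish the inequalities. The bound $-1\leq q_1\leq 1$ is immediate from $q_1^2\leq q_1^2+\|q_2\|^2=\|q\|^2=1$. For the relation $p_1^2\leq(1-q_1^2)P$, I would use the two defining relations of $T^*\Ss^{n-1}$ together with Cauchy--Schwarz on the $\OO(n-1)$-invariants of the $(q_2,p_2)$ block. Explicitly, the orthogonality condition $q\cdot p=0$ gives $q_1p_1=-q_2\cdot p_2$, while $\|q\|^2=1$ gives $\|q_2\|^2=1-q_1^2$. Applying $(q_2\cdot p_2)^2\leq\|q_2\|^2\|p_2\|^2$ and substituting $\|p_2\|^2=P-p_1^2$ yields
\begin{equation*}
q_1^2p_1^2=(q_2\cdot p_2)^2\leq (1-q_1^2)(P-p_1^2).
\end{equation*}
Expanding and cancelling the common term $q_1^2p_1^2$ on both sides leaves exactly $p_1^2\leq(1-q_1^2)P$, as required.

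The more substantive half is surjectivity: given $(q_1,p_1,P)$ with $-1\leq q_1\leq 1$ and $p_1^2\leq(1-q_1^2)P$, I must exhibit a preimage. The idea is simply to reverse the chain of constraints above, realising the three invariants of the $(q_2,p_2)$-block as genuine vectors in $\R^{n-1}$. Set $\|q_2\|^2=1-q_1^2$ (so that $\|q\|=1$) and $\|p_2\|^2=P-p_1^2$, which is nonnegative precisely because $p_1^2\leq(1-q_1^2)P\leq P$; then require $q_2\cdot p_2=-q_1p_1$ to enforce $q\cdot p=0$. Since $n-1\geq 2$ (the genuinely $n$-dimensional case; the edge cases where $q_2$ or $p_2$ has zero norm should be checked directly), I can choose vectors $q_2,p_2\in\R^{n-1}$ with the prescribed norms and inner product provided the compatibility $(q_2\cdot p_2)^2\leq\|q_2\|^2\|p_2\|^2$ holds, i.e.\ provided $q_1^2p_1^2\leq(1-q_1^2)(P-p_1^2)$. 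Running the algebra of the previous paragraph backwards shows this is equivalent to the assumed inequality $p_1^2\leq(1-q_1^2)P$, so such $q_2,p_2$ exist; taking $q=(q_1,q_2)$ and $p=(p_1,p_2)$ gives the desired point, which maps to $(q_1,p_1,P)$.

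The main obstacle is not the computation but the bookkeeping at the boundary and the degenerate strata: when $\|q_2\|^2=0$ (i.e.\ $q_1=\pm1$) or $\|p_2\|^2=0$, or when Cauchy--Schwarz is saturated, one must confirm that vectors with the required norms and inner product still exist and lie in $T^*\Ss^{n-1}$, and that these loci correspond to the lower-dimensional strata of the stratified space $\mathcal{R}$. I would treat these boundary cases separately to justify the full semi-algebraic description, and to make precise the ``stratified'' structure asserted in the statement, with the smooth top stratum corresponding to the interior where both the Cauchy--Schwarz inequality and the bounds on $q_1$ are strict.
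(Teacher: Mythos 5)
Your proof is correct and follows essentially the same route as the paper's: the published proof is just the two-line observation that $q_1,p_1,P$ generate the invariants and that the Cauchy--Schwarz inequality on the $(q_2,p_2)$-block yields $p_1^2\leq(1-q_1^2)P$. Your additional surjectivity argument --- realising any admissible triple by explicit vectors $q_2,p_2\in\R^{n-1}$ with prescribed norms and inner product --- completes a step the paper leaves implicit, and is a sensible addition.
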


The boundary of this space is often 
called the \emph{canoe surface}.  The reduced space $\mathcal{R}$ and the dynamics on it (to be discussed below)  are  illustrated in Figure~\ref{fig:canoe}.  Basic notions of the stratification of orbits spaces are given in Appendix~\ref{app:Field}.  Further details may be found for example in~\cite{Duistermaat-Kolk}.

\begin{figure}
\centering
\begin{tikzpicture}[scale=2]
    \draw[->] (-1.5,0) -- (1.5,0) node[anchor=north east]{$q_1$};
    \draw[->] (0,0) -- (-0.6,-0.4) node[anchor=north]{$p_1$};
    \draw[fill=blue] (-1,1.5) -- (-1,0) -- (1,0) -- (1,1.5);
    \draw[fill=cyan,very thick] (0,1.5) ellipse [x radius=1, y radius=0.24];
    \draw[->] (0,1.5) -- (0,2) node[anchor=south west]{$P$};
    \draw[dashed,thick] (0,0) -- (0,1.5); 
    \draw[very thick] (0,1.5) ellipse [x radius=0.5, y radius=0.12];
    \draw[very thick] (0,1) ellipse [x radius=1, y radius=0.2];
    \draw[very thick] (0,1) ellipse [x radius=0.5, y radius=0.1];
    \draw[very thick] (0,0.5) ellipse [x radius=1, y radius=0.15];
    \draw[very thick] (0,0.5) ellipse [x radius=0.5, y radius=0.075];
    \draw (-1,0) node[anchor=north]{$-1$}; 
    \draw (1,0) node[anchor=north]{$1$}; 
\end{tikzpicture}
\caption{The reduced space $\cR$ with boundary the `canoe surface'}
\label{fig:canoe}
\end{figure}
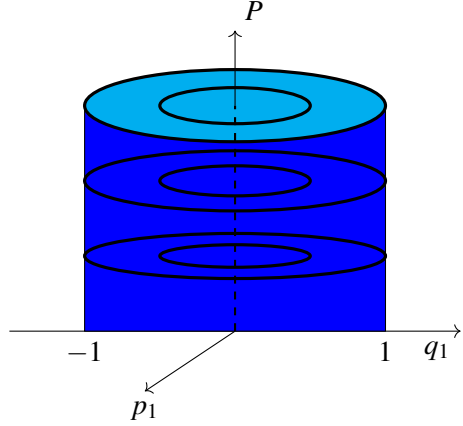

\begin{proof} It was discussed above that the quantities $q_1,p_1$ and $P$ generate the invariants.  The Cauchy-Schwarz inequality $(q_2\cdot p_2)^2\leq \|q_2\|^2\|p_2\|^2$ leads to the inequality $p_1^2 \leq P(1-q_1^2)$.
\end{proof}

Th orbit space $\cR$ has a natural stratification arising from the isotropy subgroups of the $\OO(n-1)$ action on $T^*\Ss^{n-1}$.  Accordingly, there are four strata of $\mathcal{R}$ as follows (recall that strata are by definition connected).

\begin{description} 
\item[$S_0$, $S_0'$:]  If $q_2=p_2=0$, the point $(q,p)$ is fixed by $\OO(n-1)$; this determines two  zero-dimensional strata consisting of the points $(q_1,p_1,P)=(\pm 1,0, 0)$. This corresponds to the top being placed with its axis aligned with the distinguished axis, but stationary since $P=0$; from the dynamical point of view it is no more special than any other stationary configuration of the top.
\item[$S_2$:]  This represents elements in $T^*\Ss^{n-1}$ for which $q_2$ and $p_2$ are parallel and do not both vanish.  The isotropy group of such $(q,p)$ is conjugate to $\OO(n-2)$.  The image in $\cR$ of this set is the canoe surface $p_1^2=(1-q_1^2)P$, with the points $(q_1,p_1,P)=(\pm 1,0, 0)$ removed.

\item[$S_3$:]  This 3-dimensional stratum consists of the points $(q_1,p_1,P)$ corresponding to points of $T^*\Ss^{n-1}$ for which $q_2\wedge p_2\neq 0$ and their isotropy group is conjugate to $\OO(n-3)$ (where,  for $n=4$,  $\OO(1)=\Z_2$, and for $n=3$, $\OO(0)$ is the trivial group). This stratum corresponds to the interior of the orbit space, where $p_1^2<(1-q_1^2)P$. 
\end{description}

\subsection{Reduced dynamics}
\label{sec:reduced dynamics axisymmetric}

The reduced equations on $\mathcal{R}$ can easily be obtained from \eqref{eq:RedEqnsGeneral}. One finds,
\begin{equation}
\label{eq:axisym-red-syst}
\dot q_1 =  \frac{p_1}{J_1+J_2}, \qquad
\dot p_1 = -2H q_1, \qquad
\dot P =0 , 
\end{equation}
where, after some calculations, $H$ is given by
\begin{equation} \label{eq:axisym Hred-nD}
H=\frac{1}{2(J_1+J_2)}\left ( \frac{(J_2-J_1)p_1^2+(J_1+J_2)P}{(J_1-J_2)q_1^2+2J_2}\right ),
\end{equation}
and is a first integral. As expected, the flow of these equations leaves the strata of $\mathcal{R}$ invariant.  

\paragraph{Dynamics:}  Since $H$ is a first integral, the equations of motion above are linear, with solutions
$$q_1(t) = A\cos(\omega t)+B\sin(\omega t),\qquad p_1(t)=  (J_1+J_2)\omega (-A\sin(\omega t)+B\cos(\omega t)),$$
where $\omega^2 = 2h/(J_1+J_2)$ and $A,B$ are arbitrary constants of integration and $h$ is the constant value of $H$ along the solution.  There are two classes of equilibria: along the $q_1$-axis (where $p_1=P=0$ and hence $H=0$) and along the $P$-axis (where $q_1=p_1=0$). In Figure\,\ref{fig:canoe}, the  ellipses illustrate the trajectories; they lie in horizontal planes $P=\text{const}$ and are centred on the $P$-axis.

\paragraph{Steady rotations } \label{sec:steady-rot-axisymmetric}
Following Section\,\ref{sec:Steady-rotations},  we see that for an axisymmetric top there are two kinds of principal plane of inertia:  

\begin{itemize}
\item Let $a\in\R^n$ be orthogonal to $f_1$.  Then $\{f_1,a\}$ spans a principal plane containing the axis of symmetry, an \defn{axial} principal plane, with associated moment of inertia $\inertia(\Pi)=J_1+J_2$, and the corresponding steady rotation is simply a steady rotation in this plane. 

 \item Now let $\Pi$ be any plane orthogonal to the axis of symmetry.  This is also a principal plane of inertia, called an \defn{equatorial} plane, and with $\inertia(\Pi)=2J_2$.   The resulting motion generalizes the 3D steady rotations where the body rotates about its symmetry axis.
\end{itemize}

\begin{proposition}
\label{prop:steady-rot-strata}
\begin{enumerate}  
\item The steady rotations in an axial principal plane are contained in $S_2$, and conversely all points in $S_2$ with $P>0$ correspond to such steady rotations. 
\item Steady rotations in an equatorial plane of inertia are contained in $S_3$ and have $q_1=p_1=0$, and conversely,
all points with  $q_1=p_1=0$ correspond to such steady rotations.
\item Both the axial and equatorial steady rotations are Lyapunov stable relative to $\OO(n-1)$. 
\end{enumerate}
\end{proposition}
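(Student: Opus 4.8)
The plan is to verify each of the three assertions by combining the explicit reduced dynamics \eqref{eq:axisym-red-syst}--\eqref{eq:axisym Hred-nD} with the stratification of $\cR$ described above and the characterization of steady rotations from Proposition~\ref{prop:steady rotations on T^*S}. For parts (i) and (ii) the strategy is to translate each geometric condition on the principal plane into the invariant coordinates $(q_1,p_1,P)$, and then match against the strata $S_2$ and $S_3$. For part (iii) I would invoke Theorem~\ref{thm:stability} after checking that axial and equatorial planes are both extremal.

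First, for (i): an axial steady rotation takes place in a plane $\Pi=\mathrm{span}\{f_1,a\}$ with $a\perp f_1$, so along the motion $q$ and $p$ lie in $\Pi$ and rotate uniformly. Writing $q=\cos(\omega t)f_1+\sin(\omega t)a$ and $p=\|p\|(-\sin(\omega t)f_1+\cos(\omega t)a)$ (as in the proof of Proposition~\ref{prop:EM critical rays}), I would compute $q_2$ (the components along $f_2,\dots,f_n$) and $p_2$ and observe they are proportional to the fixed vector $a$, hence parallel, so the orbit lies in the stratum $S_2$ by its defining condition. Conversely, a point of $S_2$ with $P>0$ has $q_2\parallel p_2$ not both zero, which forces $\mathrm{span}\{q,p\}$ to be a plane containing $f_1$, i.e.\ an axial principal plane; since $p\neq0$ this is a steady rotation by the last paragraph of the proof of Proposition~\ref{prop:EM critical rays}. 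For (ii): an equatorial plane is orthogonal to $f_1$, so along such a rotation $q$ and $p$ have zero $f_1$-component, giving $q_1=p_1=0$; this point lies in the interior stratum $S_3$ (one checks the canoe inequality is strict). Conversely $q_1=p_1=0$ forces $q,p$ into $f_1^\perp$, an equatorial principal plane, and since $P>0$ when $p\neq0$ this is a steady rotation with the stated moment of inertia $2J_2$.

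For (iii), the task reduces to identifying the axial and equatorial planes as extremal in the sense of Definition~\ref{def:extremal} and then applying Theorem~\ref{thm:stability}, whose conclusion is $G_R$-Lyapunov stability with $G_R=\OO(n-1)$. The moments of inertia of the two families are $\inertia(\Pi)=J_1+J_2$ (axial) and $2J_2$ (equatorial), and every principal plane is one of these two types, so the set of principal-plane moments of inertia is simply $\{J_1+J_2,\,2J_2\}$. If $J_1<J_2$ then $J_1+J_2<2J_2$, so the axial planes are minimal and the equatorial planes maximal; if $J_1>J_2$ the roles reverse. In either case both families are extremal, so Theorem~\ref{thm:stability} applies directly and yields $\OO(n-1)$-Lyapunov stability.

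The main obstacle is really only bookkeeping in parts (i) and (ii): care is needed with the degenerate sub-cases (e.g.\ ensuring that the converse statements correctly exclude the zero-dimensional strata $S_0,S_0'$, where $P=0$, and that $S_2$ with $P>0$ genuinely corresponds to $q_2,p_2$ parallel and not both zero rather than to an equatorial configuration). One subtle point to handle explicitly is that the equatorial steady rotations land in the \emph{interior} stratum $S_3$ despite having $q_1=p_1=0$; I would confirm this by checking that for such points the inequality $p_1^2<(1-q_1^2)P$ holds strictly whenever $P>0$, which is immediate since both sides reduce to $0<P$. No delicate estimate is required, since the stability in (iii) is entirely inherited from the already-proven Theorem~\ref{thm:stability}.
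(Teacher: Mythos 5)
Your overall strategy is the same as the paper's: translate the geometry of axial and equatorial planes into the invariants $(q_1,p_1,P)$, match against the strata of $\cR$, and deduce (iii) from Theorem~\ref{thm:stability} after observing that for an axisymmetric body the only principal-plane moments of inertia are $J_1+J_2$ and $2J_2$, so every principal plane is extremal. The forward directions of (i) and (ii), and all of (iii), are correct as you have them.

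The gap is in the converse directions of (i) and (ii). Knowing that $q_0,p_0$ span an axial (resp.\ equatorial) principal plane at one instant does not by itself give a steady rotation: Proposition~\ref{prop:steady rotations on D} characterises steady rotations by $[\I(\Omega(t)),\Omega(t)]=0$ holding along the \emph{whole} solution, so you must show the configuration persists in time. The paragraph of the proof of Proposition~\ref{prop:EM critical rays} that you cite does not supply this: it is formulated at critical points of $(P,H)$ and rests on the Lagrange-multiplier relations \eqref{eq:LagMult2a} and \eqref{eq:Lagrange multipliers 1,2}, which you have not verified at a general point of $S_2$ --- indeed, that such points \emph{are} critical points is logically downstream of their being steady rotations, so the citation risks circularity. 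The missing step is short and is exactly how the paper closes the argument: since $C$ is diagonal and preserves the eigenspaces of $\J$, the reduced equations \eqref{eq:RedEqnsGeneral} show that $q_2\parallel p_2$ forces $\dot q_2\parallel q_2$, whence $\Omega=q\wedge\dot q=f_1\wedge a$ and $\I(\Omega)=(J_1+J_2)\Omega$ at every time; similarly $q_1=p_1=0$ is an equilibrium of \eqref{eq:axisym-red-syst}, so $\Omega(t)$ remains a rotation in a plane orthogonal to $f_1$ and $\I(\Omega)=2J_2\Omega$ throughout. In both cases $[\I(\Omega),\Omega]\equiv0$, and Proposition~\ref{prop:steady rotations on D} then yields the steady rotation (with $P>0$ guaranteeing $\Omega\neq0$). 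With that step inserted, your proof is complete.
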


\begin{proof}
First notice that if $\Omega=a\wedge b$  defines a steady rotation solution then, by Equation~\eqref{eq:Omqqdot}, $a\wedge b= q\wedge \dot q$ and it follows
that $q(t)$ and $\dot q(t)$  are contained in the plane spanned by $a$ and $b$ at all time. We will make use of this in the following.

(i) For a steady rotation in an axial plane we can write
$\Omega=f_1\wedge a$ for a constant vector $a$ such that $f_1\cdot a=0$. Since $q$ lies on the plane spanned by $f_1$ and $a$, we 
have $q=q_1f_1\pm\frac{ \sqrt{1-q_1^2}}{\|a\|}a$.
Hence, $$p=-\I(\Omega)q=-(J_1+J_2)\Omega q=(J_1+J_2)\left (\mp \sqrt{1-q_1^2}\,\|a\| f_1 + q_1a \right ).$$
The above formulae imply that $q_2$ and $p_2$ are parallel so the solution is indeed contained in $S_2$.

For the converse, recall that along $S_2$ the vectors $q_2$ and $p_2$ are parallel. 
Therefore, in view of \eqref{eq:RedEqnsGeneral} we conclude that $\dot q_2$ is also parallel to $q_2$. Writing $\dot  q_2=\lambda q_2$ we
obtain 
\begin{equation*}
\Omega = q\wedge \dot q = (q_1f_1 + (0,q_2))\wedge (\dot q_1f_1 + \lambda (0 ,  q_2))= f_1 \wedge a,
\end{equation*}
where $a=(\lambda q_1 -\dot q_1)\left (0, q_2 \right )$.  Due to our symmetry assumptions on $\I$, this implies that $\I(\Omega)=(J_1+J_2)\Omega$ so
$[\I\Omega, \Omega]=0$ which, by Proposition~\ref{prop:steady rotations on D}, implies that the solution is a steady rotation.
Therefore $a$ is constant and the steady rotation is of axial type.  (Note that the condition $P>0$ implies $\Omega \neq 0$).

(ii)  If $\Omega=a\wedge b$ with $a, \, b, \, f_1,$ mutually perpendicular, then, since $q$ is contained in the plane spanned by
$a$ and $b$, $q_1=q\cdot f_1=0$. But also $p_1=p\cdot f_1=0$ since
$p=-\I(\Omega)q=-2J_2(a\wedge b) q$ is also contained in the plane spanned by $a$ and $b$.

For the converse, note that these points are equilibria on $\mathcal{R}$ satisfying $q_1=\dot q_1=0$. Considering that $\Omega=q\wedge \dot q$, and our symmetry assumptions on the inertia tensor, we have $\I(\Omega)=2J_2\Omega$.  Therefore $[\I(\Omega),\Omega]=0$, and
Proposition~\ref{prop:steady rotations on D} implies that the motion is a steady rotation which 
takes place on an equatorial plane since $f_1$ is orthogonal to $q$ and $\dot q$.

(iii) This follows immediately from Theorem\,\ref{thm:stability}.
\end{proof}

\subsection{Reconstruction}

We now give the global details of the reconstruction of the dynamics both to $T^*\Ss^{n-1}$ and to $D$. The first conclusion is that the dynamics is essentially that of the axisymmetric 3D Veselova top,  in a sense that is made precise in the following theorem.

\begin{theorem}\label{thm:nD to 3D}
Consider the $n$-dimensional axisymmetric Veselova top, with $n> 3$ and mass tensor $\J$ given in \eqref{eq:axisymmetric J}.
Consider any solution to the reduced equation on $\cR$, and a choice of initial lift to both $T^*\Ss^{n-1}$ and $D$.  Then there exist choices of basis of\/ $\R^n$ in the body and in space, the first containing $f_1$ and the second $e_1$,  such that 
\begin{equation}\label{eq:axisymmetric nD to 3D}
q(t) = \begin{pmatrix}
\bar q(t)\cr 0
\end{pmatrix},\quad p(t) = \begin{pmatrix}
\bar p(t)\cr 0
\end{pmatrix},\quad
g(t) = \begin{pmatrix}
\bar g(t)& 0\cr 0&\mathrm{Id}_{n-3}
\end{pmatrix},\quad
\Omega(t) = \begin{pmatrix}
\bar \Omega(t)&0\cr 0 &0_{n-3}
\end{pmatrix}
\end{equation}
where $(\bar q(t),\bar p(t))\in T^*\Ss^2$ and $(\bar g(t), \bar\Omega(t))\in D(3)\subset T\OO(3)$ satisfy the equations of motion of the axisymmetric 3D Veselova top (for $\bar g,\bar\Omega$) and its reduced equations (for $(\bar q,\bar p)$), with mass tensor $\bar\J = \diag[J_1,J_2,J_2]$. 
\end{theorem}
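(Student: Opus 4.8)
The plan is to show that the $n$-dimensional dynamics, when suitably lifted, is confined to a $3$-dimensional subsystem that is isomorphic to the axisymmetric 3D Veselova top. The essential idea is that both the constraint vector $q$ and the momentum $p$ are confined to a fixed $3$-dimensional subspace, and the axisymmetry of $\J$ allows us to choose frames adapted to that subspace in which everything block-diagonalises.

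First I would analyse the reduced motion on $T^*\Ss^{n-1}$. Fix a solution $(q(t),p(t))$. By Lemma~\ref{lemma:q,p constant}, both $q$ and $p$ are constant in the space frame, and differentiating gives $\dot q=-\Omega q$, $\dot p=-\Omega p$, while $\Omega=q\wedge\dot q$ has rank at most two. The key observation is that the span $W(t)=\mathrm{span}\{q(t),p(t),\dot q(t)\}$ is at most $3$-dimensional and, because of the equations of motion \eqref{eq:RedEqnsGeneral} (where $\dot p$ is a multiple of $q$ and $\dot q$ lies in the span of $Cp$ and $Cq$), is actually \emph{invariant in time}: the time derivatives of the spanning vectors re-enter the same $3$-plane. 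Here is where axisymmetry enters crucially: since $\J=\diag[J_1,J_2,\dots,J_2]$, the matrix $C$ acts as a scalar on the orthogonal complement $f_1^\perp$, so $C$ preserves the plane $\mathrm{span}\{f_1,u\}$ for any $u$. One checks that $W$ contains $f_1$ (equivalently, the distinguished axis in the body frame lies in the motion) and hence is $\J$-invariant; therefore $\Omega(t)$, $q(t)$, $p(t)$ all remain in the fixed $\J$-invariant $3$-plane $W$. Choosing a principal basis $\{f_1,f_2,f_3,\dots\}$ of the body frame with $\mathrm{span}\{f_1,f_2,f_3\}=W$ gives the block forms for $q$, $p$, $\Omega$ in \eqref{eq:axisymmetric nD to 3D}, with the reduced $(\bar q,\bar p)$ satisfying the 3D reduced equations with $\bar\J=\diag[J_1,J_2,J_2]$, since the restriction of $C$ to $W$ matches the 3D axisymmetric $C$.

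Next I would reconstruct to $D$. Given the block form of $\Omega$, the kinematic equation $\dot g=g\Omega$ shows that the last $n-3$ rows of $g$ satisfy $\dot g=0$ in the relevant directions, so with an appropriate choice of space frame $\{e_1,e_2,e_3,\dots\}$ (with $e_1$ the distinguished axis) and body frame adapted as above, the attitude matrix takes the block-diagonal form $g=\mathrm{diag}(\bar g,\mathrm{Id}_{n-3})$. One must verify that the constraints \eqref{eq:constraints} for the $n$-dimensional system reduce exactly to the single 3D Veselova constraint on $\bar g$; this follows because $\Omega_s=g\Omega g^{-1}$ inherits the same block structure, so all components of $\Omega_s$ outside the $e_1$-$e_2$-$e_3$ block vanish automatically, and within that block the constraint $\Omega_s\in e_1\wedge\R^3$ is precisely the 3D Veselova constraint. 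The pair $(\bar g,\bar\Omega)$ then satisfies the 3D equations \eqref{eq:VesEqns}.

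The main obstacle is the careful verification that the $3$-plane $W$ is genuinely time-invariant and $\J$-invariant, and in particular that it contains $f_1$. This requires combining the algebraic structure of the equations \eqref{eq:RedEqnsGeneral} with the scalar action of $C$ on $f_1^\perp$; I expect this to need a short calculation showing $\dot q$ and $\dot p$ stay inside $\mathrm{span}\{q,p,\dot q\}$ and that this span meets the $f_1$-axis. The degenerate cases—when $p=0$, or when $q,p,\dot q$ already span fewer than three dimensions (e.g. steady rotations, where $W$ is a $\J$-invariant $2$-plane)—must be handled by allowing $W$ to be any $\J$-invariant $3$-plane containing it, which is always possible by axisymmetry. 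Once $W$ is pinned down, the block decompositions and the matching of reduced equations are routine.
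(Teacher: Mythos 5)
Your proposal is correct and follows essentially the same strategy as the paper: confine the lifted dynamics to a fixed $3$-dimensional $\J$-invariant subspace of $\R^n$, choose orthonormal body and space frames adapted to it, and observe that the restricted equations are those of the 3D axisymmetric top. The differences are in how that subspace is identified and how its dynamical invariance is established. The paper takes $V_0=\mathrm{span}\{q_0,p_0,f_1\}$ at the initial time (working in the stratum $S_3$, where these three vectors are independent); since any subspace containing $f_1$ is automatically $\J$- and $C$-invariant, and $\dot q=C(p-\lambda q)$, $\dot p=-2Hq$, the containment of the whole trajectory is then immediate --- and the paper actually proves invariance by the cleaner observation that $V_0$ is the fixed-point set of $\OO(n-3)\subset G_R$ (with the diagonal $\OO(n-3)_\Delta$ for $(g,\Omega)$), explicitly noting that inspection of the equations, your route, works equally well. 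Your choice $W=\mathrm{span}\{q,p,\dot q\}$ forces you to prove separately that $f_1\in W$ and that $W$ is constant in time; both do hold generically, since $Cv=\beta_2 v+(\beta_1-\beta_2)(v\cdot f_1)f_1$ gives $\dot q\equiv(\beta_1-\beta_2)(p_1-\lambda q_1)f_1 \pmod{\mathrm{span}\{q,p\}}$, so that $W=\mathrm{span}\{q,p,f_1\}$ away from the degenerate loci you already flag --- but starting from $\mathrm{span}\{q_0,p_0,f_1\}$ makes both points automatic. One small slip: your parenthetical ``equivalently, the distinguished axis in the body frame lies in the motion'' conflates $f_1$ (the body's symmetry axis) with the distinguished axis (which is $e_1$ in space, i.e.\ $q$ in the body, and lies in $W$ trivially); the claim you actually need and use is $f_1\in W$. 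The reconstruction step to $D$ matches the paper's (constancy of $gf_j$ for $j>3$ and the block structure of $\Omega_s$), modulo reading ``columns'' for ``rows''.
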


\begin{proof}
Suppose that the given solution in $\cR$ is in the open stratum $S_3$ (the other cases follow similarly), and let $q_0,p_0,g_0,\Omega_0$ be corresponding initial values for the lifted dynamics.  We describe the necessary changes of basis. 
  
To begin, let $V_0$ be the subspace of $\R^n$ spanned by $q_0,p_0,f_1$ (which are necessarily linearly independent by virtue of being in the stratum $S_3$), and choose an orthonormal basis of $V_0$ consisting of $f_1$ and two further vectors.  Finally complete this to an orthonormal basis $\{f_1,\dots,f_n\}$ of $\R^n$; being orthogonal to $f_1$, the remaining vectors will all be principal directions.  The change of basis matrix $P_b\in G_R=\OO(n-1)$ maps $V_0$ to $\R^3\times \{0\}\subset\R^n$. Then $q_0, p_0$ has the form given in \eqref{eq:axisymmetric nD to 3D}.  

For the matrices, define now a basis in space as follows: let $V_0'=gV_0$. Clearly $e_1\in V_0'$ since $q_0\in V_0$.  Extend $\{e_1\}$ to an orthonormal basis of $V_0'$ and then further extend to $\R^n$ by putting $e_j=gf_j$ for $j>3$. Let $P_s\in G_L=\OO(n-1)$ be the change of basis matrix.  This maps $V_0'$ to $\R^3\times\{0\}$. 

With this choice of basis, $q_0,p_0,g_0$ are of the form given in \eqref{eq:axisymmetric nD to 3D}.  It remains to show the same is true of $\Omega$ and then that the space is fixed by the dynamics. 
Now, from Equation \eqref{eq:Omqqdot}, $\Omega=q\wedge \dot q$ and under the choice of basis transforms to $P_b\Omega P_b^T = (P_bq)\wedge(P_b \dot q)$.  Since from the equations of motion \eqref{eq:RedEqnsGeneral}, $\dot q\in V_0$ it follows that $P_b\Omega P_b^T$ is of the required form since both $P_bq$ and $P_b \dot q$ belong to $\R^3\times\{0\}$. 

We need now to show that the space of such matrices and vectors is invariant under the dynamics.  There are two ways to do this, either a symmetry argument or an inspection of the equations of motion.  For the symmetry argument, first consider the $G_R=\OO(n-1)$ action on 
$T^*\Ss^{n-1}$ (Corollary~\ref{coroll:symmetry on T^*S}) and note that $V_0$ is the fixed point set of $\OO(n-3)\subset\OO(n-1)$. It  follows that $(q(t),p(t))$ is fixed by $\OO(n-3)$ for all $t$, and hence that it evolves on $(V_0\times V_0)\cap T^*\Ss^{n-1}=T^*\Ss^2$. Secondly,
consider the action of $G=G_L\times G_R = \OO(n-1)\times \OO(n-1)$ on the pair of matrices $(g,\Omega)\in D$ (Equation~\eqref{eq:lifted action}). 
Similar to  the previous argument, the set  matrix pairs $(g,\Omega)$  given in \eqref{eq:axisymmetric nD to 3D} is the fixed point set of the diagonal subgroup 
 $$\OO(n-3)_\Delta = \{ (k,k)\in G \, | \, k\in \OO(n-3) \},$$
 and hence that too is invariant under the dynamics.

From the equations of motion \eqref{eq:VesEqns} and \eqref{eq:RedEqnsGeneral} it is easy to see that the resulting equations of motion on these fixed point sets are precisely those arising from the 3D axisymmetric Veselova top with the given mass tensor. 
\end{proof}

It follows from this theorem that in order to study the dynamics of the axisymmetric  $n$-dimensional Veselova top, it suffices to reconstruct the dynamics for the 3D one. And because of the reconstruction formula~\eqref{eq:3d-reconst-g} given in Proposition\,\ref{prop:reconstruction 3D}, each motion on $T^*\Ss^2$ lifts to a similar motion on $D$.  Recall that equatorial and axial planes are defined just prior to  Proposition~\ref{prop:steady-rot-strata}.

\begin{theorem}\label{thm:reconstruction-axisymmetric}
Consider a solution on $\cR$ of the reduced $3$-dimensional axisymmetric Veselova top, with mass tensor $\J = \diag[J_1,J_2,J_2]$. The possible reconstructions to both $T^*\Ss^2$ and $D$ are given in the following table,
\begin{center}
\begin{tabular}{r|l}
stratum & reconstruction \\
\hline
 $S_3$, with  $(q_1,p_1)\neq0$  & quasiperiodic motion on 2-tori \\
$(0,0,P)\in S_3$
& steady rotations in equatorial plane \\
 $S_2$ with $P>0$ & steady rotations in axial plane  \\
all remaining points & equilibrium
\end{tabular}
\end{center}
\end{theorem}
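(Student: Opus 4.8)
The plan is to analyze each stratum of $\cR$ separately, using the explicit linear solutions of the reduced system \eqref{eq:axisym-red-syst} together with the reconstruction formula \eqref{eq:3d-reconst-g} from Proposition~\ref{prop:reconstruction 3D}. Since Theorem~\ref{thm:nD to 3D} reduces everything to the 3D case, I would work entirely with $\bar\J = \diag[J_1,J_2,J_2]$ on $T^*\Ss^2$ and $D(3)\subset T\OO(3)$. The key observation is that the reduced dynamics on $\cR$ is that of a harmonic oscillator in $(q_1,p_1)$ with $P$ conserved, so that classifying the solutions on $\cR$ is elementary; the real content is reconstructing each type of $\cR$-solution up to $T^*\Ss^2$ and then to $D$.

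First I would dispose of the equilibria of \eqref{eq:axisym-red-syst}. As noted after \eqref{eq:axisym-red-syst}, the equilibria occur along the $q_1$-axis (where $p_1=P=0$, hence $H=0$) and along the $P$-axis (where $q_1=p_1=0$). The points with $P=0$ force $p=0$, so $\Omega = q\wedge\dot q = 0$ and the reconstructed motion on $D$ is a genuine equilibrium (a stationary configuration of the top); this accounts for ``all remaining points.'' The points with $q_1=p_1=0$ and $P>0$ lie in $S_3$ and, by part (ii) of Proposition~\ref{prop:steady-rot-strata}, correspond precisely to steady rotations in an equatorial plane; these reconstruct to periodic orbits by Proposition~\ref{prop:steady rotations on D}. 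Similarly, the points of $S_2$ with $P>0$ are exactly the steady rotations in an axial plane by part (i) of Proposition~\ref{prop:steady-rot-strata}, again reconstructing to periodic solutions on $D$.

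The main case is a generic solution in the open stratum $S_3$ with $(q_1,p_1)\neq 0$. Here $(q_1(t),p_1(t))$ traces a nondegenerate ellipse with frequency $\omega=\sqrt{2h/(J_1+J_2)}$, so the $\cR$-motion is periodic. To reconstruct to $T^*\Ss^2$, I would note that $P=\|p\|^2$ is constant, and that knowing $(q_1,p_1,P)$ together with the reduced equations pins down $q(t)$ and $p(t)$ up to the residual $\OO(2)$-symmetry $G_R$ (acting on the $(q_2,p_2)$-plane); the reconstruction theorem of Field \cite{Field80} then gives that the lifted motion on $T^*\Ss^2$ lies on a torus whose dimension equals the dimension of the base periodic orbit plus the rank of the reconstructed group element. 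The further lift to $D$ is handled by the explicit formula \eqref{eq:3d-reconst-g}: since $q$, $p$, and $q\times p$ are constant in space, each row of $g(t)$ is determined, and the reconstruction adds at most the circle $G_L=\OO(2)$ acting on the left. Combining the base period with these two reconstruction circles yields quasiperiodic motion on a $2$-torus both on $T^*\Ss^2$ and on $D$, which is the claimed entry.

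The hard part will be bookkeeping the torus dimension correctly: I must verify that the two reconstruction phases (the $G_R$-phase in passing from $\cR$ to $T^*\Ss^2$, and the $G_L$-phase in passing from $T^*\Ss^2$ to $D$) together with the base frequency $\omega$ combine into genuinely a $2$-torus and not a $3$-torus, i.e.\ that one of the reconstruction drifts is either trivial or resonant with the base. Concretely, I would check via the reconstruction formula \eqref{eq:3d-reconst-g} that the frame built from $q,p,q\times p$ rotates rigidly about the fixed space direction $q\times p$ with a single frequency, so that the base oscillation and the reconstruction drift share one common period up to the free $\OO(2)$ choice, leaving exactly two independent frequencies. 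This frequency-counting is where I would concentrate the careful argument; the remaining items are immediate from the propositions already established.
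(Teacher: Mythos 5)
Your proposal is correct and takes essentially the same route as the paper: rows two to four follow from Proposition~\ref{prop:steady-rot-strata} together with the observation that $P=0$ forces $\Omega=0$, and the generic $S_3$ row follows from Field's theorem applied to the free $G_R=\OO(2)$ action (rank $1$, periodic base orbit, hence $2$-tori on $T^*\Ss^2$) combined with the explicit lift \eqref{eq:3d-reconst-g} to $D$. The frequency-counting worry you flag at the end resolves immediately: $g_0$ in \eqref{eq:3d-reconst-g} is a constant of integration rather than a dynamical drift, so $g(t)$ is a fixed smooth injective function of $(q(t),p(t))$ along each trajectory and the lift from $T^*\Ss^2$ to $D$ cannot increase the torus dimension.
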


\begin{proof}
The proofs of the statements in the second and third rows of the table were given in Proposition~\ref{prop:steady-rot-strata}.

For the first row, the solutions  through these points in the second reduced space $\cR$ are periodic. Moreover, the action of  $G_R=\OO(2)$ on $T^*\Ss^2$ restricts to a free action on $S_3$.
Since $\OO(2)$ has  rank 1,  Field's theorem (Theorem\,\ref{thm:Field}) implies that the motion is
quasi-periodic of tori of dimension two on $T^*\Ss^{2}$.  

In the remaining case $P=0$ in which case $\Omega=0$ and the kinetic energy vanishes. Since there is no potential, there is no motion.
\end{proof}

\begin{remark}
\label{rmk:Subtori}
We note the application of Field's theorem to the reconstruction of orbits on $S_3$ to $D$ proves that the motion is quasi-periodic
on invariant 3-tori since the group $G=G_L\times G_R=\OO(2)\times \OO(2)$ has rank 2. Theorem~\ref{thm:reconstruction-axisymmetric} instead shows
that the dynamics takes place in a finer  foliation by  invariant 2-dimensional sub-tori. 
A physical interpretation of this phenomenon is given  in Section~\ref{SS:physical description} below. 
This scenario, where the invariant tori on  $T^*\Ss^{n-1}$ and on $D$ have the same dimension, also holds for general non-physical inertia 
tensors satisfying~\eqref{eq:inertia-Fed-Jov} (see \cite[Section 7]{FedJov}). 
\end{remark}

\paragraph{The energy-momentum map} The global description of the dynamics of the  $n$-dimensional axi-symmetric Veselova top may now be
conveniently illustrated by looking at the image of the energy-momentum map $(P,H)$.  We assume that  $J_1<J_2$ (the opposite inequality leads to a similar description).  
 Using that $p_1^2\leq P(1-q_1^2)$, it is straightforward to show that $P/4J_2\leq H(q,p) \leq P/2(J_1+J_2)$.
Therefore, the image of the energy-momentum map $(P,H)$ is the wedge shown in Figure~\ref{fig:E-M for axisymmetric}, see also Proposition\,\ref{prop:EM critical rays}. 

\begin{figure}
\centering
\begin{tikzpicture}[scale=2.5]
 \draw[->] (0,0) -- (1.4,0) node[anchor=north]{$P$};
 \draw[->] (0,0) -- (0,1.4) node[anchor=east]{$H$};
  \draw[fill=lightgray] (0,0) -- (0.4,1.2) -- (1.2,0.4);
  \draw[very thick] (0,0) -- (0.4,1.2) node[anchor=south west] {\hskip-5mm$P=2(J_1+J_2)H$};  
  \draw[very thick] (0,0) -- (1.2,0.4) node[anchor=west] {$P=4J_2H$};
\end{tikzpicture}
\begin{minipage}{0.9\textwidth}
\caption{Image of the energy-momentum map for the axisymmetric Veselova top, assuming $J_1<J_2$; see Figure\,\ref{fig:E-M for Veselova}} 
\label{fig:E-M for axisymmetric}
\end{minipage}
\end{figure}
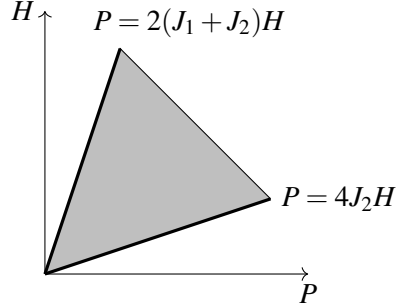

Moreover, for $(P,H)\neq(0,0)$, the equalities $H=P/2(J_1+J_2)$ and $H= P/4J_2$ are respectively attained on $S_2$ and along the points of $S_3$ where $q_1=p_1=0$ and $P>0$. 
Therefore, combining the conclusion of Theorem~\ref{thm:nD to 3D} with the  different items of Theorem~\ref{thm:reconstruction-axisymmetric}, we  obtain:
\begin{enumerate}\renewcommand{\labelenumi}{(\Alph{enumi})}
\item[(i)]  The pre-image of the points in the interior of the wedge (the regular values of the map) is foliated by invariant tori  of dimension $2$ on both $T^*\Ss^{n-1}$ and $D$ that carry quasiperiodic flow. 

\item[(ii)] The pre-image of the   lower boundary line $H=P/4J_2$, $P>0$, consists of steady rotations in equatorial principal planes of inertia.
\item[(iii)]  The pre-image of the upper boundary line $H=P/2(J_1+J_2)$, $P>0$, consists of steady rotations in axial principal planes of inertia.
\item[(iv)]   The pre-image of the vertex of the wedge consists of all the equilibria both on $D$ and on $T^*\Ss^{n-1}$.
\end{enumerate}

 \subsection{Physical description of the dynamics}
 \label{SS:physical description}

The dynamics of the  $n$-dimensional axisymmetric Veselova top is now easily described.  In view of Theorem~\ref{thm:nD to 3D}, we  restrict our attention to the 3D case.
Consider a solution of the system with $H, P\neq 0$. Without loss of generality, we may choose the  second axis $e_2$ of the
 space frame to be aligned with the space representation of the momentum vector $p(t)$. The 
 attitude matrix $g(t)$ is then given by Equation~\eqref{eq:3d-reconst-g} with $g_0=\mbox{Id}_3$.
 It follows that the axis of symmetry of the body, as seen in space, traces the following path along the motion:
 \begin{equation*}
g(t)f_1=\left (q_1(t),\frac{p_1(t)}{\sqrt{P}}, \pm \sqrt{L(t)} \right )^T,
\end{equation*}
where $L(t)=1-q_1(t)^2-\frac{p_1(t)^2}{\sqrt{P}}$. The above is  a closed curve in space since all entries of the above
vector are periodic with the same frequency. 

Let us focus on the case where  the solution is quasi-periodic on a 2-torus (see Theorem~\ref{thm:reconstruction-axisymmetric}).
Then $0<L(t)<1$ and, as follows from the conservation of energy~\eqref{eq:axisym Hred-nD}, the axis of the body rotates about the $e_3$ axis of the space frame
along the  curve given as the intersection of the cylinder
 \begin{equation*}
 \left \{  (x_1,x_2,x_3) \,   \left   |  \, x_1^2+\frac{P}{2H(J_1+J_2)}x_2^2= \frac{4HJ_2-P}{2H(J_2-J_1)} \right . \right \}
\end{equation*}
with the unit sphere. Under our assumptions, the connected components of this 
curve are not contained on a plane with constant $x_3$. Therefore, apart from the periodic precession of the body about the $e_3$-axis,
the body undergoes a periodic nutation with half its period (see Figure~\ref{F:motion-axi}). 
Apart from this periodic translational motion, the body rotates about its symmetry axis at an angular speed that 
is compatible with the nonholonomic constraint. 

\begin{figure}[h]
\centering
\includegraphics[width=3cm]{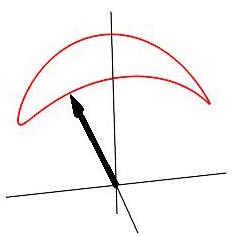}
 \put (-85,20) {$e_1$}
 \put (-32,5) {$e_2$}
  \put (-43,80) {$e_3$}
  
  \begin{minipage}{0.9\textwidth}
  \caption{Periodic motion of the axis of symmetry of the top. The distinguished axis  that defines the nonholonomic constraint is  $e_1$.
The axis   $e_2$ is aligned with the space representation of the momentum vector $p(t)$.  }
\label{F:motion-axi}
  \end{minipage}
 \end{figure}

 Note that even though the system has the same symmetry group as the classical Lagrange top, whose motions are  generically quasi-periodic with three frequencies (see e.g.\ \cite{MMCM}), the motion of the axisymmetric Veselova top only has two independent frequencies
since the nutation and precession motions of the body are commensurate. This gives a physical interpretation of Remark~\ref{rmk:Subtori}.

\section{The cylindrical Veselova top}
\label{sec:cylindrical}

We now consider the more general case of \defn{cylindrical tops}: that is, we assume 
\begin{equation}\label{eq:cylindrical J}
\J = \diag[J_1,\dots,J_1,J_2,\dots,J_2],
\end{equation}
with $J_1\neq J_2$. Let $r$ be the multiplicity of eigenvalue $J_1$, and $r'=n-r$ the multiplicity of $J_2$, and we may assume $2\leq r\leq r'$ (the case $r=1$ was treated in the previous section). In particular the dimension $n$ of the body must be at least 4. 

The symmetry group $G_R$ defined by Equation~\eqref{eq:def-G_R} is $G_R:=\OO(r)\times\OO(r')\subset\OO(n)$. Indeed, if 
$$h=\left[\begin{matrix}
h_{11}&0\cr 0&h_{22}
\end{matrix}\right],$$
with $h_{11}\in\OO(r)$ and $h_{22}\in\OO(r')$, then $h\J h^T=\J$. Corresponding to the splitting $\R^n=\R^r\times\R^{r'}$, write $q=(q_1,q_2)$ and $p=(p_1,p_2)$. Then, the action of $h\in G_R$ on $(q,p)\in T^*\Ss^{n-1}$ (see Corollary~\ref{coroll:symmetry on T^*S}) is 
$$hq=(h_{11}q_1,\,h_{22}q_2)$$ 
and similarly for $p$.

\subsection{Second reduction}
As for the axisymmetric tops, denote the \defn{second reduced space} $T^*\Ss^{n-1} /G_R$ by $\cR$. 
To determine $\cR$ for a cylindrical body, we find the invariant functions for the $G_R$ action on $T^*\Ss^{n-1}$. 
As mentioned before, the ring of invariants for $\OO(k)$ acting on $(q,p)\in\R^k\times\R^k$ is generated by three independent quadratic expressions:
$$\|q\|^2,\quad \|p\|^2,\quad\text{and}\quad q\cdot p.$$
By the Cauchy-Schwarz inequality, these satisfy $|q\cdot p|^2 \leq\|q\|^2\|p\|^2$.

It follows that the ring of invariants for $G_R=\OO(r)\times\OO(r')$ acting on $(q_1,q_2,p_1,p_2) \in \R^r\times\R^{r'}\times\R^r\times\R^{r'}$ is generated by the 6 invariants
$$\|q_1\|^2,\quad \|p_1\|^2,\quad q_1\cdot p_1,\quad\|q_2\|^2,\quad \|p_2\|^2,\quad q_2\cdot p_2.$$

We now restrict to $T^*\Ss^{n-1}$ by requiring $\|q\|^2=\|q_1\|^2+\|q_2\|^2=1$ and $q\cdot p = q_1\cdot p_1+q_2\cdot p_2=0$.
In place of 6 invariants, we now have 4:

\begin{equation}\label{eq:cylindrical invariants}
A=\|q_1\|^2,\quad B=\|p_1\|^2,\quad P=\|p_1\|^2+\|p_2\|^2,\quad\text{and}\quad D=p_1\cdot q_1.
\end{equation}
The remaining  invariants are given simply by $\|q_2\|^2=1-A$, $\|p_2\|^2=P-B$, and $q_2\cdot p_2=-D$. The reason for using $P$ rather than $\|p_2\|^2$ is that $P$ is conserved by the dynamics. (Note that $D$ refers both to this variable as well as the distribution in $T\OO(n)$---we hope no confusion will be caused.)

\begin{proposition}\label{prop:reduced space-cylindrical}
The second reduced space $\cR$ is the semialgebraic variety given by the set consisting of those $(A,B,P,D)\in\R^4$ satisfying
$$0\leq A\leq 1, \quad 0\leq B\leq P,\quad D^2\leq AB,\quad D^2 \leq(1-A)(P-B).$$
This reduced space $\cR$ has a natural stratification arising from the isotropy groups for the action of $G_R$ on $T^*\Ss^{n-1}$.  These subgroups and the associated strata, both in $T^*\Ss^{n-1}$ and their images in $\cR$, are given in Table~\ref{table:strata T^*S + R}. 
\end{proposition}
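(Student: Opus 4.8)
The plan is to realise $\cR = T^*\Ss^{n-1}/G_R$ as the image of the Hilbert (invariant) map assembled from the generators in \eqref{eq:cylindrical invariants}, and then to read off the stratification from the isotropy subgroups of the $G_R$-action. The key structural observation is that $G_R=\OO(r)\times\OO(r')$ acts as a \emph{product}: by Corollary~\ref{coroll:symmetry on T^*S}, $\OO(r)$ acts diagonally on the pair $(q_1,p_1)\in\R^r\times\R^r$ and $\OO(r')$ on $(q_2,p_2)\in\R^{r'}\times\R^{r'}$, with no coupling between the factors. Everything therefore reduces to the single-factor problem. For this I would recall the classical fact that, for $\OO(k)$ with $k\geq2$ acting diagonally on $\R^k\times\R^k$, the map $(u,w)\mapsto(\|u\|^2,\|w\|^2,u\cdot w)$ has image exactly the Cauchy--Schwarz cone $\{(a,b,d):a\geq0,\ b\geq0,\ d^2\leq ab\}$. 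Necessity is Cauchy--Schwarz; surjectivity holds because any positive semidefinite Gram matrix with entries $a,b,d$ is realised by some pair of vectors, which needs only $k\geq2$ in the interior $d^2<ab$ and $k\geq1$ on the boundary $d^2=ab$.

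Applying this to each factor, the image in $\R^6$ of the full invariant map on $\R^r\times\R^{r'}\times\R^r\times\R^{r'}$ is the product of the two cones, namely $(q_1\cdot p_1)^2\leq\|q_1\|^2\|p_1\|^2$ and $(q_2\cdot p_2)^2\leq\|q_2\|^2\|p_2\|^2$ together with nonnegativity of the squared norms. I would then pass to $T^*\Ss^{n-1}$ by imposing $\|q\|^2=\|q_1\|^2+\|q_2\|^2=1$ and $q\cdot p=q_1\cdot p_1+q_2\cdot p_2=0$; in invariant terms these are the two linear relations $\|q_2\|^2=1-A$, $q_2\cdot p_2=-D$, and $\|p_2\|^2=P-B$ recorded in \eqref{eq:cylindrical invariants}. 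The crucial point for surjectivity is that the realisation respects these constraints: if $(A,B,D)$ is realised by $(q_1,p_1)$ and $(1-A,P-B,-D)$ by $(q_2,p_2)$, then automatically $\|q\|^2=A+(1-A)=1$ and $q\cdot p=D+(-D)=0$, so the resulting $(q,p)$ genuinely lies in $T^*\Ss^{n-1}$. Hence the image of $T^*\Ss^{n-1}$ is the full product of cones intersected with these two relations, and substituting yields exactly $0\leq A\leq 1$, $0\leq B\leq P$, $D^2\leq AB$ and $D^2\leq(1-A)(P-B)$, proving the first assertion (both inclusions, using $r,r'\geq2$).

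For the stratification, I would again exploit the product structure: the isotropy group of $(q_1,q_2,p_1,p_2)$ is the product of the $\OO(r)$-isotropy of $(q_1,p_1)$ and the $\OO(r')$-isotropy of $(q_2,p_2)$. For $\OO(k)$ acting on a pair of vectors, the stabiliser is conjugate to $\OO(k-2)$, $\OO(k-1)$ or $\OO(k)$ according to whether the pair spans a plane, a line, or $\{0\}$; these three cases are detected respectively by the Gram determinant being strictly positive, zero with the pair not both vanishing, or the pair vanishing entirely. On the first factor this determinant equals $AB-D^2$ (vanishing of $(q_1,p_1)$ being $A=B=0$), and on the second it equals $(1-A)(P-B)-D^2$ (vanishing of $(q_2,p_2)$ being $A=1,\ B=P$). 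Enumerating the admissible combinations of these rank conditions produces the isotropy subgroups and the corresponding loci, which are precisely the entries of Table~\ref{table:strata T^*S + R}.

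The main obstacle is not any single computation but the careful bookkeeping of this enumeration on the boundary and corners of $\cR$. One must check that each combination of rank conditions on the two factors is actually attained on $\cR$ (ruling out empty or coincident strata), that it yields the claimed conjugacy class of isotropy subgroup, and that the resulting semialgebraic locus is connected — since strata are by definition connected, a locus cut out by the determinants might a priori split into several strata, and these subtleties are exactly what Table~\ref{table:strata T^*S + R} must encode correctly.
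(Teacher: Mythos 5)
Your proposal is correct and follows essentially the same route as the paper: the inequalities come from Cauchy--Schwarz applied to each factor of $G_R=\OO(r)\times\OO(r')$, and the stratification is read off from the single-factor analysis of $\OO(k)$-isotropy of a pair of vectors (stabiliser $\OO(k)$, $\OO(k-1)$ or $\OO(k-2)$ according to rank), combined via the product structure. You are in fact somewhat more careful than the paper, which only states the necessity of the inequalities and leaves implicit the surjectivity of the invariant map onto the semialgebraic set (your Gram-matrix realisation argument, compatible with the constraints $\|q\|^2=1$ and $q\cdot p=0$).
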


A section of $\cR$ with constant $P>0$ and the adjacencies between the strata are shown in  Figure\,\ref{fig:R-cylindrical}. 

\begin{table}[h]
\centering
$$\begin{array}{c||c|c||c}
 & \multicolumn{2}{c||}{T^*\Ss^{n-1}} & \cR \\
\hline
\text{stratum}& \text{the below are zero}& \text{ISG}& \text{equations of stratum} \\
\hline
\rule{0pt}{12pt}
S_0 & q_1, p_1, p_2 & \OO(r)\times\OO(r'-1)	 & A=B=P=D=0 \\[4pt]
S_0' & q_2, p_1, p_2 & \OO(r-1)\times\OO(r')	& A=1, B=P=D=0\\[4pt]
S_1  & q_1,p_1 & \OO(r)\times\OO(r'-2)	 & A=B=D=0  \\[4pt]
S_1' & q_2,p_2 & \OO(r-2)\times\OO(r')	 &  A=1, P-B=D=0 \\[4pt]
S_2  & \left\{\!\!\hbox{$\begin{array}{l}q_1\wedge p_1 \cr q_2\wedge p_2\end{array}$}\right. & \OO(r-1)\times\OO(r'-1) & \left\{\hbox{$\begin{aligned} D^2&=AB\cr &=(1-A)(P-B)\end{aligned}$}\right.\\[16pt]
S_3  & q_1\wedge p_1 & \OO(r-1)\times\OO(r'-2)& D^2=AB \\[4pt]
S_3' &q_2\wedge p_2 & \OO(r-2)\times\OO(r'-1) & D^2=(1-A)(P-B) \\[4pt]
S_4 &\text{--}& \OO(r-2)\times\OO(r'-2)  & \text{the rest} \cr
\hline
\end{array}
$$
\begin{minipage}{0.9\textwidth}
\caption{Strata for the action of $G_R=\OO(r)\times\OO(r')$ on $T^*\Ss^{n-1}$, valid for all   $r,r'\geq2$ with $r+r'=n$.  
Here $\OO(1)=\Z_2$ and $\OO(0)$ is the trivial group.
See Proposition\,\ref{prop:reduced space-cylindrical}.  The notation is such that $\dim S_k=\dim S_k'=k$ in $\cR$. 
}
\label{table:strata T^*S + R}
\end{minipage}
\end{table}

\begin{proof}
The inequalities arise from the definition of $A$ and the Cauchy-Schwarz inequality.

For the stratification and isotropy subgroups,  consider first the natural action of $\OO(k)$ on $x=(a,b)\in\R^k\times\R^k$, with $k\geq2$.   Clearly, the stabilizer of the point $x=(0,0)$ is $\OO(k)$, and that is the only point fixed by the whole group. Next, if $a$ and $b$ are parallel ($a\wedge b=0$) and not both zero, then the stabilizer is (conjugate to) $\OO(k-1)$ (if $k=2$ then $\OO(1)=\Z_2$ is generated by the reflection in the line containing $a,b$). Finally, if $a,b$ are linearly independent then the stabilizer is $\OO(k-2)$, which is the group of all orthogonal transformations in the subspace orthogonal to the space spanned by $a,b$.   (For the case of $k=2$, the group $\OO(0)$ is the trivial group.)

These observations can now be applied to the action of $G_R=\OO(r)\times\OO(r')$ on $\R^r\times\R^{r'}\times\R^r\times\R^{r'}$
and its restriction to $T^*\Ss^{n-1}$ to obtain the strata in Table~\ref{table:strata T^*S + R}.  
\end{proof}

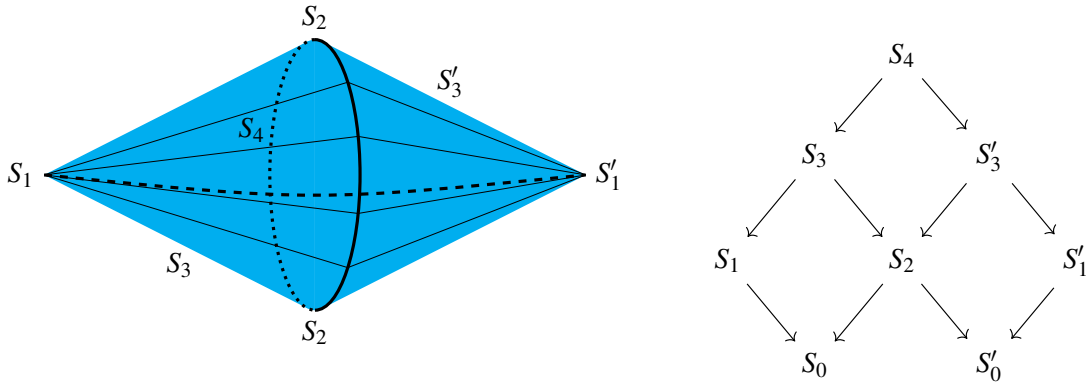
\begin{figure}[h]
\centering
\begin{tikzpicture}[scale=1.2]
\fill[cyan] (0,0) -- (2.92,1.48) -- (2.92, -1.48) -- (0,0);
\fill[cyan] (6,0) -- (3.08,1.48) -- (3.08, -1.48) -- (6,0);
\draw[very thick,fill=cyan] (3,-1.5) arc (-90:90:0.5 and 1.5); 
\draw[very thick,fill=cyan,dotted] (3,1.5) arc (90:270:0.5 and 1.5); 
\draw (0,0) node[anchor=east] {$S_1$};
\draw (6,0) node[anchor=west] {$S_1'$};
\draw (3,-1.5) node[anchor=north] {$S_2$};
\draw (3,1.5) node[anchor=south] {$S_2$};
\draw (1.5,-0.75) node[anchor=north] {$S_3$};
\draw (4.5,0.75) node[anchor=south] {$S_3'$};
\draw (2.3,0.5) node {$S_4$};
\draw[very thin] (0,0) -- (3.364, -1.028) -- (6,0);
\draw[very thin] (0,0) -- (3.479, -.4272) -- (6,0);
\draw[very thin] (0,0) -- (3.479, .4272) -- (6,0);
\draw[very thin] (0,0) -- (3.364, 1.028) -- (6,0);
\draw[very thick,dashed] (0,0) .. controls (3,-0.3) .. (6,0);
\end{tikzpicture}
\quad\begin{minipage}{0.4\textwidth}
\vskip -4cm
\[
  \begin{tikzcd}[column sep=small]
     &&S_4 \arrow[dr] \arrow[dl] &&\\
     &S_3 \arrow[dr]\arrow[dl] && S_3' \arrow[dr]\arrow[dl] \\
     S_1\arrow[dr] && S_2 \arrow[dl] \arrow[dr] && S_1'\arrow[dl] \\
     & S_0 &&S_0' 
  \end{tikzcd}
\]
\end{minipage}

\begin{minipage}{0.9\textwidth}
\caption{On the left, a section at constant $P>0$ of the reduced space $\cR$ for the cylindrical Veselova top; the $S_j$ label the strata; it is a double solid cone, meeting at the `equator' which is the stratum $S_2$. The dashed curve shows the (relative) equilibria in the open stratum $S_4$ (the interior of the cone).  As $P$ decreases, the cone becomes `thinner' and in the limit when $P=0$ it becomes a line segment (the $A$-axis in $\cR$).    On the right is shown the adjacencies between the strata, where $X\longrightarrow Y$ means $Y\subset \overline{X}$. }
\label{fig:R-cylindrical}
\end{minipage}
\end{figure}

\paragraph{Description of strata} Here we give a brief description of the strata, in particular in the constraint distribution $D$; the statements about dynamics are proved in Proposition~\ref{prop:steady-rot-strata-Cylindrical} and Theorem\,\ref{thm:reconstruction-cylindrical}.
\begin{description}
\item[$S_0,S_0'$:] These points in $\cR$ correspond to points in $T^*\Ss^{n-1}$ where $p=0$, and either $q_1=0$ or $q_2=0$ respectively. This means the top is oriented in space in such a way that the distinguished axis is contained in one of the eigenspaces of $\J$, and the top is stationary.
These strata do not appear in the cone in Figure\,\ref{fig:R-cylindrical} since the cone represents  a section of $\cR$ with $P>0$.
\item[$S_1,S_1'$:] Here the top is such that again the distinguished axis lies in one of the eigenspaces of $\J$; we will see below that the corresponding motion is necessarily a steady rotation (of {\em pure type}; see below). These strata  correspond to the
vertices of the cones in  Figure\,\ref{fig:R-cylindrical}.
\item[$S_2$:] The configuration is arbitrary, but we show below the motion is a steady rotation, here of {\em mixed type} (see below).  In Figure\,\ref{fig:R-cylindrical} the stratum $S_2$ is the circle where the two cones meet.
\item[$S_3,S_3'$:] In Figure\,\ref{fig:R-cylindrical} these strata are the boundaries of the two solid cones.  We show below that in fact the motion on these strata is `isomorphic' to the motion of an axisymmetric top. 
\item[$S_4$:] In the figure, this corresponds to the interior of the solid cone. This stratum contains the `genuine' general motion of the cylindrical top, not reducible to other cases. For generic points, the solutions are quasiperiodic on tori of dimension 4;  there are also some relative equilibria where the tori are of dimension  3, represented by the dashed curve in the figure. 
\end{description}

\begin{proposition}\label{prop:reducedEqMotion}
The reduced equations of motion for the cylindrical Veselova top on $\cR$ are,
\begin{equation} \label{eq:reduced EoM}
\left\{\quad\begin{aligned}
\dot A \  &= \ \frac{2D}{J_1+J_2}, \\ 
\dot B \ &= \ -4HD, \\
\dot P \ &= \ 0, \\
\dot D \ &= \ -4HA + \frac{P-4J_2 H}{J_1-J_2}. 
\end{aligned}\right.
\end{equation}
Here 
\begin{equation}\label{eq:Hamiltonian in ABPD}
H = \frac12\left((\beta_1-\beta_2)B+\beta_2P - \frac{(\beta_1-\beta_2)^2D^2}{\beta_1A+\beta_2(1-A)}\right),
\end{equation}
which is an integral of motion, and 
\begin{equation}\label{eq:betas}
\beta_1=((J_1-J_2)A+J_1+J_2)^{-1}, \quad\quad \beta_2=( (J_1-J_2)A+2J_2)^{-1}.
\end{equation}
\end{proposition}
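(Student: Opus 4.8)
The plan is to push the general reduced equations \eqref{eq:RedEqnsGeneral} through the invariants \eqref{eq:cylindrical invariants}, exploiting the block structure that cylindrical symmetry forces on the matrix $C$ of \eqref{eq:Cmatrix}. The first observation is that, since $\J=\diag[J_1,\dots,J_1,J_2,\dots,J_2]$ and $q\cdot\J q=J_1A+J_2(1-A)$, the matrix $C$ acts as a scalar on each of the two blocks $\R^r$ and $\R^{r'}$: writing $q=(q_1,q_2)$ and $p=(p_1,p_2)$, one has $Cq=(\beta_1q_1,\beta_2q_2)$ and $Cp=(\beta_1p_1,\beta_2p_2)$, with $\beta_1,\beta_2$ exactly the quantities in \eqref{eq:betas}, since $\beta_1^{-1}=J_1+q\cdot\J q$ and $\beta_2^{-1}=J_2+q\cdot\J q$.

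With this in hand I would first express the three scalars appearing in \eqref{eq:RedEqnsGeneral}--\eqref{eq:energy-general} in terms of the invariants, using $\|q_2\|^2=1-A$, $\|p_2\|^2=P-B$ and $q_2\cdot p_2=-D$:
\begin{equation*}
q\cdot Cq=\beta_1A+\beta_2(1-A),\qquad p\cdot Cq=(\beta_1-\beta_2)D,\qquad p\cdot Cp=(\beta_1-\beta_2)B+\beta_2P.
\end{equation*}
Substituting these into \eqref{eq:energy-general} yields \eqref{eq:Hamiltonian in ABPD} immediately, so the formula for $H$ requires no further work.

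Next I would differentiate the four invariants along the flow, substituting $\dot p=-2Hq$ together with the block form $\dot q_i=\beta_i(p_i-\kappa q_i)$ of the first equation in \eqref{eq:RedEqnsGeneral}, where $\kappa=(p\cdot Cq)/(q\cdot Cq)$. Since $\dot P=2p\cdot\dot p=-4H(p\cdot q)=0$ (also a consequence of Proposition~\ref{prop:conservation of P}) and $\dot B=2p_1\cdot\dot p_1=-4HD$, these two equations drop out at once. The computations of $\dot A=2q_1\cdot\dot q_1$ and $\dot D=\dot p_1\cdot q_1+p_1\cdot\dot q_1$ instead produce expressions still carrying the $\beta_i$, which must be converted to the $J_i$. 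The two algebraic identities that accomplish this are
\begin{equation*}
\beta_2^{-1}A+\beta_1^{-1}(1-A)=J_1+J_2,\qquad \beta_1-\beta_2=-(J_1-J_2)\,\beta_1\beta_2,
\end{equation*}
both read off directly from \eqref{eq:betas}; the first gives $\beta_1\beta_2/(q\cdot Cq)=1/(J_1+J_2)$, which collapses $\dot A$ to $2D/(J_1+J_2)$.

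The genuine obstacle is the $\dot D$ equation. Differentiation gives $\dot D=-2HA+\beta_1B-\beta_1(\beta_1-\beta_2)D^2/(q\cdot Cq)$, and the key step is to recognise that the two $\beta_1$-terms recombine, via \eqref{eq:Hamiltonian in ABPD}, into $\tfrac{\beta_1}{\beta_1-\beta_2}(2H-\beta_2P)$. Using the second identity above to rewrite $\beta_1/(\beta_1-\beta_2)=-\beta_2^{-1}/(J_1-J_2)$ and $\beta_1\beta_2/(\beta_1-\beta_2)=-1/(J_1-J_2)$, and then inserting $\beta_2^{-1}=(J_1-J_2)A+2J_2$, delivers exactly $-4HA+(P-4J_2H)/(J_1-J_2)$ as in \eqref{eq:reduced EoM}. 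This elimination of $B$ and $D^2$ in favour of $H$ is the only place requiring care; everything else is bookkeeping, and the invariance of the system together with the fact that the flow descends to $\cR$ are already supplied by Proposition~\ref{prop:eqns-nD}.
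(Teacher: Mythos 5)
Your proposal is correct and follows essentially the same route as the paper: compute the block-scalar form of $C$, express $q\cdot Cq$, $p\cdot Cq$, $p\cdot Cp$ in the invariants, and differentiate $A,B,P,D$ along \eqref{eq:RedEqnsGeneral}; your intermediate expressions for $\dot A$ and $\dot D$ coincide exactly with those in the paper's proof. The only difference is that you make explicit the algebraic identities ($\beta_2^{-1}A+\beta_1^{-1}(1-A)=J_1+J_2$ and $\beta_1-\beta_2=-(J_1-J_2)\beta_1\beta_2$) behind the steps the paper dismisses as ``simple calculations,'' which is a welcome addition rather than a deviation.
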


\begin{proof}  We derive these from the equations of motion given in Proposition\,\ref{prop:eqns-nD}.
Under our assumption on $\J$ we have 
$$q\cdot\J q = J_1\|q_1\|^2 +J_2\|q_2\|^2 = J_1A+J_2(1-A).$$
It follows that the matrix $C$ of \eqref{eq:Cmatrix} satisfies
$$C^{-1} = \begin{pmatrix}
\left(J_1(1+A)+J_2(1-A)\right)\mathrm{Id}_r &0\cr 0 & \left(J_1A+J_2(2-A)\right)\mathrm{Id}_{r'}
\end{pmatrix} 
$$
Then $C=\begin{pmatrix}
\beta_1\mathrm{Id}_r &0\cr 0&\beta_2\,\mathrm{Id}_{r'} 
\end{pmatrix}$
where $\beta_1,\beta_2$ are given in \eqref{eq:betas}.  
Using this notation, we find
\begin{eqnarray*}
    q\cdot Cq &=& \beta_1A+\beta_2(1-A) \\
    p\cdot Cp &=& \beta_1B+\beta_2C \\
    q\cdot Cp &=& (\beta_1-\beta_2)D,
\end{eqnarray*}
and the Hamiltonian \eqref{eq:energy-general} is indeed given by \eqref{eq:Hamiltonian in ABPD}.

Now,
$$\dot A = 2q_1\cdot\dot q_1 = 2q_1\cdot\left(\beta_1p_1 - \frac{(\beta_1-\beta_2)D}{\beta_1A+\beta_2(1-A)}\beta_1q_1\right) = 2\beta_1D\left(1-\frac{(\beta_1-\beta_2)A}{\beta_1A+\beta_2(1-A)}\right).$$
and after some simple calculations one finds the expression for $\dot A$ given above. 

More simple is, 
$$\dot B = 2p_1\cdot\dot p_1=-4HD,\quad \dot P = 0,$$
and
$$\dot  D = -2HA+\beta_1B - \frac{\beta_1(\beta_1-\beta_2)D^2}{\beta_1A+\beta_2(1-A)},$$
which can be shown to be equal to the required expression.
\end{proof}

\begin{remark}\label{rmk:reduced measure}
The invariant measure for the general (first reduced) $n$-dimensional Veslova top given in \eqref{prop:measure-nD}  pushes forward to an invariant measure on $\cR$ given simply by
$$ \beta_1\beta_2\,\d A\,\d B\,\d P\,\d D.$$
\end{remark}

\paragraph{Reduced dynamics}
We can now describe the reduced dynamics on $\cR$. 
Since $P,H$ are constant, the equations of motion \eqref{eq:reduced EoM} are linear, and easy to integrate. First consider the equations for $(A,D)$: the general solution is
\begin{equation}\label{eq:AD solutions}
\begin{split}
A(t) &= C_1\cos(\omega t) + C_2\sin(\omega t) + A_* \\
D(t) &= \frac12(J_1+J_2)\omega \left(-C_1\sin(\omega t) + C_2\cos(\omega t)\right),
\end{split}
\end{equation}
where $C_1,C_2$ and $A_*$ are arbitrary constants (if the constants are chosen so that the initial value lies within $\cR$, then the entire solution must lie within $\cR$).  Here
\begin{equation}\label{eq:omega}
\omega^2 = \frac{8h}{J_1+J_2},
\end{equation}
where $h$ is the constant value of $H$ on the solution.    Substituting into the differential equation for $B$ shows,
\begin{equation}\label{eq:B solution}
B(t) = -2\,h (J_1+J_2) \left(
C_1\,\cos (\omega t) + C_2\,\sin(\omega t) \right) + B_*. 
\end{equation}
The quantities $A_*$ and $B_*$ (depending on the initial conditions) are the respective mean values of $A$ and $B$ over one period.  The mean value for $D$ is $D_*=0$.

\paragraph{Equilibria}
From $\dot A=0$ we require $D=0$ which then implies $\dot B=0$.  There remains to consider the equation $\dot D=0$.
\begin{itemize}
\item  As already noted above, along the $A$-axis, the points are equilibria (these correspond to $p=0$ so are the equilibria of the full system, and are the points where $H=0$).  The end points of this axis are the points on $S_0$ and $S_0'$, while the other points belong to $S_2$.
\item The strata $S_0,S_0',S_1,S_1'$ consist entirely of equilibrium points (for example, on $S_1$, if $A=B=D=0$ then $P=4J_2 H$ showing $\dot D=0$). 

\item More generally, equilibria occur with $H=h>0$, if $A=A_0$ and $B=B_0$, where
\begin{equation}\label{eq:equilibria}
A_0 = \frac{P-4J_2h}{4(J_1-J_2)h},\quad\text{and}\quad 
B_0 = \frac{(P-4J_2h)(P+4J_1h)}{8(J_1-J_2)h}.
\end{equation}
If $P=4J_2h$ then $A_0=B_0=0$ and the point lies in $S_1$; if $P=4J_1h$ then $A=1,B=P$ and the point lies in $S_1'$.  All other equilibrium points lie in $S_4$, and are illustrated by the dashed curve in $\cR$ in Figure\,\ref{fig:R-cylindrical}.
\end{itemize}

\begin{theorem} \label{thm:dynamics on R}
The motion on the reduced space $\cR$ has the following properties.
\begin{enumerate}
\item It is integrable, with the 3 first integrals $P,H$ and 
$$F:= 2(J_1+J_2)AH + B.$$
\item Each point with $H=0$ is an equilibrium. 
\item For each $(h,P)$  with $h>0$ in the image of the energy-momentum map (see Figure\,\ref{fig:E-M for cylindrical}) there is a unique equilibrium with $H=h$; this occurs at the point $(A,B,P,D) = (A_0,B_0,P,0)$, where $A_0$ and $B_0$ are the expressions given in \eqref{eq:equilibria}. 
\item All other solutions are periodic with period $\pi\sqrt{\frac{J_1+J_2}{2h}}$.  
\end{enumerate}
\end{theorem}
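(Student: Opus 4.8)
The reduced equations \eqref{eq:reduced EoM} are \emph{linear} in $(A,B,D)$ once the constants $H$ and $P$ are fixed, and the excerpt has already produced their explicit solution \eqref{eq:AD solutions}--\eqref{eq:B solution}, the frequency \eqref{eq:omega}, and the equilibrium values \eqref{eq:equilibria}. My plan is therefore to read each of the four items off these facts, the only genuine work being a couple of verifications. For item (i) I first check by direct differentiation that $F=2(J_1+J_2)AH+B$ is conserved: since $\dot H=0$,
$$\dot F=2(J_1+J_2)H\,\dot A+\dot B=2(J_1+J_2)H\cdot\tfrac{2D}{J_1+J_2}-4HD=0.$$
Together with $P$ and $H$ this furnishes three first integrals, whose differentials are independent on a dense open subset of $\cR$, so that common level sets are generically one-dimensional. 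Integrability is then manifest from the explicit solution of item (iv) --- all bounded orbits being periodic --- or, more structurally, from the Euler--Jacobi last-multiplier theorem applied with the invariant measure of Remark~\ref{rmk:reduced measure}.

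For item (ii) the content is that $H=0$ forces a fixed point. Writing $H=\tfrac12\bigl(p\cdot Cp-(p\cdot Cq)^2/(q\cdot Cq)\bigr)$ as in \eqref{eq:energy-general}, with $C$ positive definite, the Cauchy--Schwarz inequality for the inner product $x\cdot Cy$ gives $H\ge0$, with equality if and only if $p$ is $C$-proportional to $q$; since $p\cdot q=0$ and $\|q\|=1$ this forces $p=0$, hence $P=B=D=0$. (Equivalently, $H$ is the kinetic energy $\tfrac12\langle\I(\Omega),\Omega\rangle$, which is positive definite, so $H=0\iff\Omega=0\iff p=0$.) Substituting $P=B=D=0$ into \eqref{eq:reduced EoM} annihilates every right-hand side, so the point is an equilibrium.

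For item (iii) I impose $\dot A=\dot B=\dot D=0$. From $\dot A=0$ one gets $D=0$, which makes $\dot B=0$ automatic, and $\dot D=0$ then determines $A=A_0$ uniquely. With $D=0$ and $A=A_0$ fixed, the energy relation \eqref{eq:Hamiltonian in ABPD} pins down $B$ uniquely; evaluating $\beta_1,\beta_2$ from \eqref{eq:betas} at $A_0$ and simplifying identifies this value with $B_0$, giving the unique equilibrium $(A_0,B_0,P,0)$ on each level $H=h>0$ (and it lies in $\cR$ precisely when $(h,P)$ is in the image of the energy--momentum map). For item (iv), the remaining solutions are those with $h>0$ and nonzero oscillation amplitude in \eqref{eq:AD solutions}; since $A$, $B$, $D$ then oscillate at the single frequency $\omega=\sqrt{8h/(J_1+J_2)}$, they are periodic with minimal period $2\pi/\omega=\pi\sqrt{(J_1+J_2)/(2h)}$.

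I expect the only delicate step to be the identification $B=B_0$ in item (iii): substituting $A=A_0$ into \eqref{eq:betas} and reducing the resulting rational expression in $h$ and $P$ to the form \eqref{eq:equilibria} is routine but error-prone, and is the main obstacle. Everything else is immediate from the linearity of \eqref{eq:reduced EoM} and the positivity of the energy.
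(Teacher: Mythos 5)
Your proposal is correct and follows essentially the same route as the paper: the direct check that $\dot F=0$, reading the equilibria and the period off the explicit linear solution, and solving $\dot D=0$ then $H=h$ for $A_0$ and $B_0$. Your Cauchy--Schwarz argument for item (ii) merely makes explicit the paper's observation that $H=0$ corresponds to the $A$-axis (i.e.\ $p=0$), and the extra remarks on independence of the integrals and Euler--Jacobi are harmless elaborations.
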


In particular the period of the periodic orbits given in (iv) depends only on the value of the energy; this is reminiscent of the period-energy relation in Hamiltonian systems, even though this system is not known to be Hamiltonian (not even Poisson-Hamilton systems in general satisfy this property).

\begin{proof}
(i) For $P$ this is clear, for $H$ we know this already, while for $F$ it is a simple calculation:
\begin{eqnarray*}
\dot F &=& 2(J_1+J_2)\dot A H + \dot B \\
&=& 2H\left((J_1+J_2)\dot A -2D\right)\ =\ 0.
\end{eqnarray*}

\noindent(ii) The set of points with $H=0$ is the $A$-axis in $\cR$, and we have already seen above that these are equilibria.

\noindent(iii) Solving the equation $\dot D=0$ with $H=h$ for $A$ shows immediately that $A=A_0$. Putting $A=A_0$ the equation $H=h$ can easily be solved for $B$ to find $B=B_0$.

\noindent(iv) If a point is not an equilibrium then it lies on a periodic orbit, as shown in \eqref{eq:AD solutions} and \eqref{eq:B solution}, with period $2\pi/\omega$ with $\omega$ given in \eqref{eq:omega}.
\end{proof}

\paragraph{Conclusion:} In $\cR$ each invariant set given by fixing values of $H$ and $P$  is a surface bounded by the intersection with a circle in the boundary of the cone in Figure\,\ref{fig:R-cylindrical}, with a unique equilibrium point surrounded by periodic orbits all of the same period.

\begin{remark}\label{rmk:Poisson structure}
Since the dynamics on $\cR$ is entirely periodic, one may find Poisson structures of rank 2 for which the system is Poisson-Hamiltonian, with Hamiltonian $H$, and with Casimir functions $P$ and $F$ \cite{FassoRank2}. It would be interesting  to know if
any of these Poisson structures arises as the reduction of an almost Poisson structure
on $D$ as described in \cite{LGN-JM17}.  This possibility to express the dynamics on  $\cR$ in Hamiltonian form contrasts with the dynamics on $T^*\Ss^{n-1}$, c.f.\  Remark\,\ref{rmk:hamiltonization}.
\end{remark}

\paragraph{Steady rotations} 
It follows from Proposition\,\ref{prop:EM critical rays} that there are three critical rays in the image of the momentum map; namely where $P=4J_1H$, $P=2(J_1+J_2)H$ and $P=4J_2H$. The image of the energy-momentum map is therefore as shown in Figure\,\ref{fig:E-M for cylindrical}.

\begin{figure}[h]
\centering
\begin{tikzpicture}[scale=2.5]
 \draw[->] (0,0) -- (1.4,0) node[anchor=north]{$P$};
 \draw[->] (0,0) -- (0,1.4) node[anchor=east]{$H$};
  \draw[fill=lightgray] (0,0) -- (0.4,1.2) -- (1.2,0.4);
  \draw[very thick] (0,0) -- (0.4,1.2) node[anchor=south west] {\!\!\!$P=4J_1H$};  
  \draw[very thick] (0,0) -- (0.8,0.8) node[anchor=south west] {$P=2(J_1+J_2)H$};  
  \draw[very thick] (0,0) -- (1.2,0.4) node[anchor=west] {$P=4J_2H$};
\end{tikzpicture}
\begin{minipage}{0.9\textwidth}
\caption{Image and critical rays (see Proposition \ref{prop:EM critical rays}) of the energy-momentum map for the cylindrical Veselova top, assuming $J_1<J_2$} 
\label{fig:E-M for cylindrical}
\end{minipage}
\end{figure}
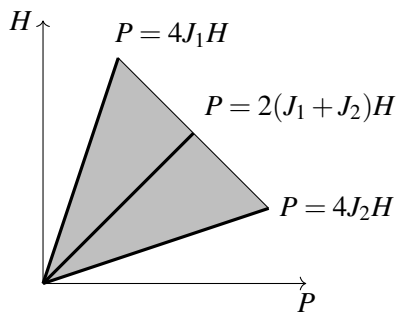

Each critical ray  in the image of the momentum map corresponds to a type of steady rotation of the 
 cylindrical Veselova top:
\begin{itemize}
\item Any plane $\Pi$ contained in the $r$-dimensional $J_1$-eigenspace of $\J$ is a principal plane of inertia.  Such a plane $\Pi$ satisfies $\inertia(\Pi)=2J_1$.
\item Any plane $\Pi$ contained in the $r'$-dimensional $J_2$-eigenspace of $\J$ is a principal plane of inertia.  Such a plane $\Pi$ satisfies $\inertia(\Pi)=2J_2$. 
\item Let $\J a =J_1a$ and $\J b=J_2b$, with $a,b$ non-zero.  The plane $\Pi_{a,b}=\R\{a,b\}$ is a   principal plane in the body with moment of inertia $\inertia(\Pi)=J_1+J_2$.
\end{itemize}
The inertial planes of the first two types we call \defn{pure inertial planes}, while those of the third type we call  \defn{mixed inertial planes}.  The steady rotations in pure inertial planes are relative equilibria, while the ones in mixed inertial planes are not (see Theorem\,\ref{thm:reconstruction-cylindrical}).

Analogous to Proposition~\ref{prop:steady-rot-strata}, we have:
\begin{proposition}
\label{prop:steady-rot-strata-Cylindrical}
\begin{enumerate}  
\item Steady rotations of mixed type are contained in $S_2$, and conversely,
all points in $S_2$  with $P>0$ correspond to such steady rotations.
\item The steady rotations in a  pure inertia plane associated to $J_1$ are contained in $S_1'$, and conversely all points in $S_1'$ correspond to such steady rotations. The same statement holds replacing  $J_1$  by $J_2$ and $S_1'$ by $S_1$.
\item The pure steady rotations are Lyapunov stable relative to the action of $G_R$. 
\end{enumerate}
\end{proposition}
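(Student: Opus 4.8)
The plan is to mirror the proof of Proposition~\ref{prop:steady-rot-strata} for the axisymmetric case, using throughout the identity $\Omega=q\wedge\dot q$ from \eqref{eq:Omqqdot} together with the block-diagonal form $C=\diag[\beta_1\mathrm{Id}_r,\beta_2\mathrm{Id}_{r'}]$ established in the proof of Proposition~\ref{prop:reducedEqMotion}, with $\beta_1,\beta_2$ as in \eqref{eq:betas}. Recall from Proposition~\ref{prop:steady rotations on D} that steady rotations are exactly the rotations in principal planes, equivalently those $\Omega$ with $[\I(\Omega),\Omega]=0$; and that for a cylindrical body the principal planes are precisely the pure planes (contained in a single eigenspace of $\J$, with moment of inertia $2J_1$ or $2J_2$) and the mixed planes $\R\{a,b\}$ with $\J a=J_1a$, $\J b=J_2b$ (moment of inertia $J_1+J_2$). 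Since part (iii) will follow at once from Theorem~\ref{thm:stability}, the bulk of the work lies in the two equivalences (i) and (ii).

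For (i), in the forward direction I would take a mixed steady rotation $\Omega=a\wedge b$ with $a$ in the $J_1$-eigenspace $\R^r\times\{0\}$ and $b$ in the $J_2$-eigenspace $\{0\}\times\R^{r'}$. By \eqref{eq:Omqqdot} both $q$ and $\dot q$ lie in $\R\{a,b\}$, and $p=-\I(\Omega)q=-(J_1+J_2)\Omega q$ does too; projecting onto the two eigenspaces shows $q_1,p_1$ are each proportional to $a$ and $q_2,p_2$ each proportional to $b$, so $q_1\wedge p_1=0$ and $q_2\wedge p_2=0$, which places the point in $S_2$ (see Table~\ref{table:strata T^*S + R}), the nontriviality $\Omega\neq0$ being equivalent to $P>0$. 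For the converse I would start from the defining condition of $S_2$, namely $q_1\parallel p_1$ and $q_2\parallel p_2$, and use the block form of $C$ in the first equation of \eqref{eq:RedEqnsGeneral}: since $\dot q_i=\beta_i(p_i-\lambda q_i)$ with $\lambda=(p\cdot Cq)/(q\cdot Cq)$ a scalar, parallelism of $p_i$ and $q_i$ forces $\dot q_i\parallel q_i$ whenever $q_i\neq0$. Expanding $\Omega=q\wedge\dot q=(q_1+q_2)\wedge(\dot q_1+\dot q_2)$ then kills the diagonal terms $q_i\wedge\dot q_i$ and leaves $\Omega$ a multiple of $q_1\wedge q_2$, a rotation in a mixed plane; Proposition~\ref{prop:steady rotations on D} then identifies it as a mixed steady rotation.

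For (ii), in the forward direction a pure $J_1$ steady rotation has $\Omega=a\wedge b$ with both $a,b$ in the $J_1$-eigenspace, so $q,\dot q$ and $p=-2J_1\Omega q$ all lie in that eigenspace; thus $q_2=p_2=0$, whence $A=1$, $B=P$, and $D=p_1\cdot q_1=p\cdot q=0$ by \eqref{eq:qp-conds}, matching $S_1'$ (on which $P=B>0$ automatically). For the converse, $S_1'$ ($q_2=p_2=0$) is invariant because $\dot q_2=\beta_2(p_2-\lambda q_2)=0$ and $\dot p_2=-2Hq_2=0$; on it $q$ stays in the $J_1$-eigenspace where $\J$ acts as the scalar $J_1$, so every $\Omega=q\wedge\dot q$ is a rotation in a pure $J_1$ plane, hence a pure steady rotation by Proposition~\ref{prop:steady rotations on D}. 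Exchanging the two eigenspaces gives the statement for $J_2$ and $S_1$. Finally, for (iii), since $J_1\neq J_2$ we may assume $J_1<J_2$, so that $2J_1<J_1+J_2<2J_2$; the pure $J_1$ planes are then minimal and the pure $J_2$ planes maximal, hence both are extremal in the sense of Definition~\ref{def:extremal}, and Theorem~\ref{thm:stability} yields the asserted $G_R$-Lyapunov stability.

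The main obstacle I anticipate is the converse in (i) at the degenerate points of $S_2$ where $q_1=0$ (so $A=0$) or $q_2=0$ (so $A=1$): there the parallelism argument for $\dot q_i$ breaks down for the vanishing component, and one must instead compute $\Omega$ directly. For instance when $q_1=0$ the equations $D^2=AB=(1-A)(P-B)$ force $B=P$, hence $p_2=0$, and one finds $\dot q_1=\beta_1 p_1$, $\dot q_2=-\beta_2\lambda q_2$, so that $\Omega=q\wedge\dot q$ is a nonzero multiple of $q_2\wedge p_1$ — again a mixed plane since $q_2$ and $p_1$ lie in different eigenspaces; the case $q_2=0$ is symmetric. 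Verifying that all such boundary configurations still yield mixed rotations is the only genuinely case-dependent part of the argument, everything else being a direct consequence of the block structure of $C$ and Proposition~\ref{prop:steady rotations on D}.
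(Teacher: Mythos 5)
Your proposal is correct and follows essentially the same route as the paper, which itself describes the argument as a straightforward adaptation of Proposition~\ref{prop:steady-rot-strata} and only writes out the converse of (i) in detail: there $q_j\parallel p_j$ gives $\dot q_j=\lambda_j q_j$, hence $\Omega=q\wedge\dot q=(\lambda_2-\lambda_1)(q_1,0)\wedge(0,q_2)$, so $\I(\Omega)=(J_1+J_2)\Omega$ and Proposition~\ref{prop:steady rotations on D} applies, with (iii) coming from Theorem~\ref{thm:stability} exactly as you say. Your separate treatment of the boundary points of $S_2$ with $q_1=0$ or $q_2=0$ (where the ansatz $\dot q_j=\lambda_j q_j$ is not literally available, yet $\Omega$ is still a nonzero multiple of a mixed wedge such as $q_2\wedge p_1$) is a small but genuine refinement of the paper's sketch.
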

\begin{proof}
The proof is a straightforward adaptation of the proof of Proposition~\ref{prop:steady-rot-strata}. For example, let us show that all points in $S_2$
with $P>0$ 
correspond to steady rotations of mixed type. Recall from Table~\ref{table:strata T^*S + R} that along this stratum  $q_j$ is parallel to $p_j$,  $j=1,2$.
In view of the cylindrical symmetry of $\J$,  Equations~\eqref{eq:RedEqnsGeneral} 
then imply that $\dot q_j$ is parallel to $q_j$. Writing $\dot q_j=\lambda_j q_j$ we obtain
\begin{equation*}
\Omega = q\wedge \dot q = ((q_1,0)+ (0,q_2))\wedge (\lambda_1(q_1,0) + \lambda_2 (0 ,  q_2))=(\lambda_2-\lambda_1) (q_1,0)\wedge (0,q_2).
\end{equation*}
The cylindrical symmetry of  $\J$ now implies that $\I(\Omega)=(J_1+J_2)\Omega$ so
$[\I\Omega, \Omega]=0$ which, by Proposition~\ref{prop:steady rotations on D}, implies that the solution is a steady rotation, that is clearly of mixed type.
(Note that the condition $P>0$ implies $\Omega \neq 0$.)

The details of the proof of the other statements are left to the reader.  The final statement follows from Theorem\,\ref{thm:stability}.
\end{proof}

\subsection{Reconstruction to $T^*\Ss^{n-1}$ and $D$} \label{sec:lifting - cylindrical}

Recall that a Veselova top with cylindrical symmetry is necessarily of dimension at least 4.  The analogous reduction to 3D for the axisymmetric top in this case is a reduction to 4D. 

\begin{theorem}\label{thm:nD to 4D}
Consider the $n$-dimensional cylindrical Veselova top, with $n>4$ and mass tensor $\J$ given in \eqref{eq:cylindrical J}.
Consider any solution to the reduced equation on $\cR$, and a choice of initial lift to both $T^*\Ss^{n-1}$ and $D$.  Then there exists choices of basis of\/ $\R^n$ in the body and in space,  such that 
\begin{equation}\label{eq:cylindrical nD to 4D}
q(t) = \begin{pmatrix}
\bar q(t)\cr 0
\end{pmatrix},\quad p(t) = \begin{pmatrix}
\bar p(t)\cr 0
\end{pmatrix},\quad
g(t) = \begin{pmatrix}
\bar g(t)& 0\cr 0&\mathrm{Id}_{n-4}
\end{pmatrix},\quad
\Omega(t) = \begin{pmatrix}
\bar \Omega(t)&0\cr 0 &0_{n-4}
\end{pmatrix},
\end{equation}
where $(\bar q(t),\bar p(t))\in T^*\Ss^3$ and $(\bar g(t), \bar\Omega(t))\in D(4)\subset T\OO(4)$ satisfy the equations of motion of the axisymmetric 4D Veselova top (for $(\bar g,\bar\Omega)$) and its reduced equations (for $(\bar q,\bar p)$), with mass tensor $\bar\J = \diag[J_1,J_1,J_2,J_2]$. 
\end{theorem}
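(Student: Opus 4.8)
The plan is to mirror the proof of Theorem~\ref{thm:nD to 3D} step for step, replacing the distinguished $3$-dimensional subspace $V_0$ used there by a $4$-dimensional subspace adapted to the two eigenspaces of $\J$. I would present the argument for a generic orbit lying in the open stratum $S_4$, and remark at the end that the lower strata are handled by the same device of enlarging the relevant subspace.

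First I would fix an initial lift $(q_0,p_0)\in T^*\Ss^{n-1}$ and $(g_0,\Omega_0)\in D$, and write $q_0=(q_{0,1},q_{0,2})$, $p_0=(p_{0,1},p_{0,2})$ according to the splitting $\R^n=\R^r\times\R^{r'}$ into the $J_1$- and $J_2$-eigenspaces of $\J$. I would then set
$$W_0=\mathrm{span}\{q_{0,1},p_{0,1}\}\oplus\mathrm{span}\{q_{0,2},p_{0,2}\},$$
which in the stratum $S_4$ is a $\J$-invariant subspace with a $2$-dimensional summand in each eigenspace, hence $4$-dimensional. Next I would choose an orthonormal body basis whose first two vectors span $\mathrm{span}\{q_{0,1},p_{0,1}\}\subset\R^r$ and whose next two span $\mathrm{span}\{q_{0,2},p_{0,2}\}\subset\R^{r'}$, completing it by principal directions so that the restriction of $\J$ to $W_0=\R^4\times\{0\}$ is exactly $\bar\J=\diag[J_1,J_1,J_2,J_2]$; the associated change of basis $P_b$ brings $q_0,p_0$ to the stated form. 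For the space frame I would copy the axisymmetric argument: since $e_1=g_0q_0\in g_0W_0=:W_0'$, I extend $\{e_1\}$ to an orthonormal basis of $W_0'$ and set $e_j=g_0f_j$ for $j>4$, obtaining $P_s\in G_L=\OO(n-1)$ that puts $g_0$ in the block form $\diag[\bar g_0,\mathrm{Id}_{n-4}]$. That $\Omega_0$ acquires the required block form follows from $\Omega=q\wedge\dot q$ (Equation~\eqref{eq:Omqqdot}) together with $\dot q\in W_0$; the latter holds because the matrix $C$ of~\eqref{eq:Cmatrix} is block-scalar on the two eigenspaces and therefore preserves $W_0$.

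The crucial step is invariance of these forms under the flow, which I would establish by a fixed-point argument exactly as in Theorem~\ref{thm:nD to 3D}. On $T^*\Ss^{n-1}$ the (intrinsically defined) subset $(W_0\times W_0)\cap T^*\Ss^{n-1}\cong T^*\Ss^3$ is precisely the fixed-point set of the stratum-$S_4$ isotropy group $\OO(r-2)\times\OO(r'-2)\subset G_R$ recorded in Table~\ref{table:strata T^*S + R}; by $G_R$-equivariance (Corollary~\ref{coroll:symmetry on T^*S}) the solution through $(q_0,p_0)$ remains in this fixed-point set for all time. For the reconstruction to $D$, I would check that the set of pairs $(g,\Omega)$ of the form~\eqref{eq:cylindrical nD to 4D} is the fixed-point set of the diagonally embedded subgroup $(\OO(r-2)\times\OO(r'-2))_\Delta=\{(k,k)\in G: k=\diag[\mathrm{Id}_4,\bar B],\ \bar B\in\OO(r-2)\times\OO(r'-2)\}$ acting via~\eqref{eq:lifted action}; the same matrix $k$ fixes $e_1$ (so $k\in G_L$) and preserves the eigenspace splitting (so $k\in G_R$), and equivariance again yields invariance. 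Finally I would observe that on $\R^4\times\{0\}$ the Veselova equations~\eqref{eq:VesEqns} and the reduced equations~\eqref{eq:RedEqnsGeneral} restrict literally to those of the $4$-dimensional top with mass tensor $\bar\J$.

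I expect the main obstacle to be bookkeeping rather than conceptual: one must track the two-eigenspace splitting carefully when choosing the adapted bases so that the four active coordinates are grouped as $(J_1,J_1,J_2,J_2)$ while keeping the identification of $W_0$ with the fixed-point set of $\OO(r-2)\times\OO(r'-2)$, both on $T^*\Ss^{n-1}$ and (diagonally) on $D$. Note that the reordering needed to collect the active coordinates means $P_b$ is generally not an element of $G_R$, but this is harmless: the fixed-point identifications are intrinsic, and since $P_s\in G_L$ fixes the distinguished axis, the transformed system is again a Veselova top whose mass tensor restricts to $\bar\J$ on the invariant block. The non-generic strata, where one or both summands of $W_0$ drop dimension, are treated by padding each eigenspace-component up to a $2$-dimensional principal subspace, which is possible since $r,r'\geq2$; this always produces a $4$-dimensional block and the argument carries over unchanged.
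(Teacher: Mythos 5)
Your proposal is correct and follows essentially the same route as the paper: the same $4$-dimensional subspace $V_1\oplus V_2$ built from $\{q_{0,1},p_{0,1}\}$ and $\{q_{0,2},p_{0,2}\}$, the same adapted body and space bases (including the observation that $P_b\notin G_R$ because of the reordering, while $P_s\in G_L$), and the same fixed-point/equivariance argument with $\OO(r-2)\times\OO(r'-2)$ on $T^*\Ss^{n-1}$ and its diagonal embedding on $D$. Your remarks on the lower strata and on why $\Omega_0$ inherits the block form are slightly more explicit than the paper's, but the argument is the same.
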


\begin{proof}
This is similar to the proof of Theorem\,\ref{thm:nD to 3D}, and we highlight the differences.  Here we consider a point in the stratum $S_4$. Let $V=V_1\oplus V_2$ be the 4-dimensional subspace of $\R^n$, where
$$V_1 \ = \ \R\,\left\{\begin{pmatrix}q_1\cr0\end{pmatrix},\;\begin{pmatrix}p_1\cr0\end{pmatrix}\;\,\right\},\quad\text{and}\quad V_2 \ = \ \R\,\left\{\begin{pmatrix}0\\ q_2\end{pmatrix},\; \begin{pmatrix}0\cr p_2\end{pmatrix} \,\right\}.$$
The assumption that the initial point is in $S_4$ implies these 4 vectors are linearly independent. 
 We now construct an orthonormal body basis $\{f_1,\dots, f_n\}$ as follows. 
First choose $\{f_1,f_2\}$ and $\{f_3,f_4\}$ as orthonormal bases of $V_1$ and $V_2$ respectively.
Next choose $\{f_5,\dots, f_{r+2}\}$ to be an orthonormal basis of the complement of $V_1$ in $\R^r\times \{0\}$
and the remaining vectors $\{f_{r+3},\dots, f_{n}\}$ to be a basis of the complement of $V_2$ in $\{0\}\times \R^{r'}$.

With this choice of body frame, $q_0$ and $p_0$ are of the form given in  \eqref{eq:cylindrical nD to 4D}. However,
contrary to the situation encountered in the proof of  Theorem\,\ref{thm:nD to 3D}, the corresponding change of basis matrix $P_b\notin G_R$.
With the new choice of body frame, the  mass tensor $\J$ transforms to 
\begin{equation*}
\J'= \diag[J_1,J_1, J_2, J_2, \underbrace{J_1,\dots,J_1}_{r-2},\underbrace{J_2,\dots,J_2}_{r'-2}].
\end{equation*}

Now, for the space frame, 
 $e_1=g_0q$ so is contained in the image $g_0(V)$, and complete $\{e_1\}$ to an orthonormal basis of $g_0(V)$. Finally
  let $e_j=g_0 f_j$ for $j>4$.  The change of basis matrix $P_s\in G_L$ and  the matrices $g_0$ and $\Omega_0$ are in the forms given in \eqref{eq:cylindrical nD to 4D}.  

To conclude invariance of the set   \eqref{eq:cylindrical nD to 4D} under the dynamics we apply a symmetry argument as in  the proof of Theorem\,\ref{thm:nD to 3D}. 
This time we consider the group $H$ with elements:
\begin{equation*}
h= \left[\begin{array}{ccc}
\mbox{Id}_4& 0& 0 \\
0 & h_{11} & 0\\
0 & 0 &h_{22} \end{array}\right], \qquad h_{11} \in \OO(r-2), \quad h_{22}\in \OO(r'-2).
\end{equation*}
Note that for  $h\in H$ we clearly have $h^T\J'h=\J'$ so  $H \subset G_R$. Similar to the argument given in
   the proof of Theorem\,\ref{thm:nD to 3D}, 
all vectors $(q,p)$ in~\eqref{eq:cylindrical nD to 4D} 
are fixed by the $H$ action on $T^*\Ss^{n-1}$ and the matrices   $(g,\Omega)$
are fixed  by the action of the  diagonal subgroup $H_\Delta$ on $D$ where
\begin{equation*}
H_\Delta = \{ (h,h)\in G= G_L\times G_R \, | \, h\in H\}.
\end{equation*}

It can readily be seen that the equations of motion on these fixed point subspaces is the same as those of the 4D cylindrical Veselova top.
\end{proof}

We now describe the dynamics of the cylindrical Veselova top in $D$, as well as its first reduction in $T^*\Ss^{n-1}$.  Since in $\cR$ the motion is entirely periodic, the lifted motions are known as \emph{relative periodic orbits}.  We take advantage of the previous theorem and assume, without loss of generality, that $n=4$.

\begin{theorem}\label{thm:reconstruction-cylindrical}
The reconstruction to $T^*\Ss^3$ and to $D$ of the solutions of the reduced equations are as follows. 
\begin{enumerate}
\item In $S_0,S_0'$ the solutions are equilibria. 
\item In $S_1,S_1'$, the relative equilibria are steady rotations in pure inertial planes (for $S_1$ they  rotate in a principal plane $\Pi$ with $\inertia(\Pi)=2J_2$ while in $S_1'$ it has $\inertia(\Pi)=2J_1$). 
\item  In $S_2$ we have two following possibilities.  If $P=0$ the top is in equilibrium, whereas
if $P>0$, the solutions are steady rotations in mixed inertial planes, and they are not relative equilibria.  
\item In $S_3,S_3'$ the motion lies on tori of dimension at most 2 in both $T^*\Ss^3$ and $D$ and behaves in the same way as the 3D axisymmetric Veselova top.
\item In $S_4$ we have two possibilities.    In the (relative) equilibria in $S_4$  the motion lies on tori of dimension at most 2 in $T^*\Ss^3$ and at most 3 in $D$.  These relative equilibria are $G_R$-Lyapunov stable. Finally, over the periodic orbits in $S_4$ the motion lies on tori of dimension at most 3 in $T^*\Ss ^3$ and at most 4 in $D$.
\end{enumerate}
\end{theorem}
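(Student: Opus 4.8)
The plan is to reduce everything to $n=4$ by Theorem~\ref{thm:nD to 4D}, and then to treat each stratum in turn using the description of the reduced dynamics on $\cR$ from Theorem~\ref{thm:dynamics on R}, the isotropy data of Table~\ref{table:strata T^*S + R}, and the reconstruction theorem of Field (Theorem~\ref{thm:Field}). The organising principle is that a relative equilibrium (a point projecting to an equilibrium of $\cR$) reconstructs to quasi-periodic motion on a torus of dimension at most $\mathrm{rank}(K)$, whereas a relative periodic orbit (projecting to a periodic orbit of $\cR$) reconstructs to a torus of dimension at most $1+\mathrm{rank}(K)$, where $K=G_R$ for the lift to $T^*\Ss^{3}$ and $K=G=G_L\times G_R$ for the lift to $D$. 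For $n=4$ one has $G_R=\OO(2)\times\OO(2)$ of rank $2$ and $G=\OO(3)\times\OO(2)\times\OO(2)$ of rank $3$, and these ranks produce exactly the stated bounds. Since the theorem only asserts ``at most'', upper bounds suffice throughout.

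Items (i)--(iii) are essentially a translation of Proposition~\ref{prop:steady-rot-strata-Cylindrical} together with Theorem~\ref{thm:dynamics on R}(ii). On $S_0,S_0'$, and on $S_2$ with $P=0$, one has $p=0$, whence $\dot q=0$ by \eqref{eq:RedEqnsGeneral}, so $\Omega=q\wedge\dot q=0$ and the configuration is a genuine equilibrium. On $S_1,S_1'$ and on $S_2$ with $P>0$ the motion is a steady rotation by Proposition~\ref{prop:steady-rot-strata-Cylindrical}, hence periodic in $T^*\Ss^{3}$ and on $D$ by Proposition~\ref{prop:steady rotations on T^*S}. The point requiring care is the distinction between relative equilibria and mere steady rotations: I would observe that a pure steady rotation (in $S_1,S_1'$) rotates within a single eigenblock of $\J$, so its rotation is realised by a one-parameter subgroup of $G_R$ and the orbit stays in a single $G_R$-orbit, making it a relative equilibrium; a mixed steady rotation (in $S_2$, $P>0$) instead rotates in a plane spanning both blocks, a rotation not contained in $G_R$, so that $A,B,D$ oscillate and its $\cR$-image is a periodic orbit rather than an equilibrium, confirming that it is not a relative equilibrium.

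For (iv) I would set up a fixed-point argument analogous to those of Theorems~\ref{thm:nD to 3D} and~\ref{thm:nD to 4D}. On $S_3$ the vectors $q_1,p_1$ are parallel, so they lie on a fixed line of the first block; this subspace is precisely the fixed-point set of the residual isotropy $\OO(1)=\Z_2$ (the reflection of the $J_1$-block fixing that line), and is therefore flow-invariant. Restricting to it collapses the $J_1$-block to a single effective direction, so the dynamics becomes that of a $3$D top with mass tensor $\diag[J_1,J_2,J_2]$, which is axisymmetric. Invoking Theorem~\ref{thm:reconstruction-axisymmetric} and Remark~\ref{rmk:Subtori} then yields quasi-periodicity on tori of dimension at most $2$ on both $T^*\Ss^{3}$ and $D$; the case $S_3'$ is identical with the roles of $J_1,J_2$ interchanged.

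Finally, for (v) the action of $G_R$ on $S_4$ is free (the isotropy in Table~\ref{table:strata T^*S + R} is trivial for $r=r'=2$), so Field's theorem applies cleanly. The relative equilibria are the points $(A_0,B_0,P,0)$ of Theorem~\ref{thm:dynamics on R}(iii): reconstructing an $\cR$-equilibrium gives tori of dimension at most $\mathrm{rank}(G_R)=2$ on $T^*\Ss^{3}$ and at most $\mathrm{rank}(G)=3$ on $D$, while the periodic orbits of $S_4$ reconstruct to tori of dimension at most $1+\mathrm{rank}(G_R)=3$ on $T^*\Ss^{3}$ and $1+\mathrm{rank}(G)=4$ on $D$. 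For the $G_R$-Lyapunov stability of the relative equilibria I would argue that, by the Conclusion following Theorem~\ref{thm:dynamics on R}, on each level $\{P=\mathrm{const}\}$ the reduced equilibrium is a nonlinear centre surrounded by periodic orbits, hence Lyapunov stable in $\cR$; the $G_R$-invariant function built from the integrals $H,P,F$ then attains a local extremum on the $G_R$-orbit of the relative equilibrium and serves as a Lyapunov function, exactly as in the proof of Theorem~\ref{thm:stability}. I expect the main obstacle to be the reconstruction bookkeeping: correctly identifying the isotropy subgroups and verifying that the hypotheses of Field's theorem yield precisely the claimed dimensions across the two reduction steps $D\to T^*\Ss^{n-1}\to\cR$, together with making the $S_3,S_3'$ identification with the axisymmetric top fully rigorous.
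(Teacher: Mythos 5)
Your proposal is correct and follows essentially the same route as the paper's proof: reduction to $n=4$, invoking Proposition~\ref{prop:steady-rot-strata-Cylindrical} for items (i)--(iii), the $\Z_2$ fixed-point-set reduction to the 3D axisymmetric top for $S_3,S_3'$, and Field's theorem with the free $G_R$ (rank 2) and $G$ (rank 3) actions for $S_4$. The extra detail you supply --- the one-parameter-subgroup criterion distinguishing the pure steady rotations (relative equilibria) from the mixed ones (relative periodic orbits), and the explicit Lyapunov-function argument on $\cR$ --- is consistent with what the paper leaves implicit.
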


\begin{proof}
(i) $S_0$ and $S_0'$ have $p=0$ and hence are equilibria. 

\noindent(ii) and (iii):  the proofs are entirely analogous to the proof of Proposition~\ref{prop:steady-rot-strata} (the case where $P=0$ in (iii) is not, but is immediate).

\noindent(iv) Suppose $(q,p)\in S_3$ ($S_3'$ is similar). Then their projections $q_1$ and $p_1$ are parallel, say to a vector $u$, while $q_2$ and $p_2$ are linearly independent. The line $\R u$ is the fixed point set of the isotropy group $\OO(1)=\Z_2$, and hence the dynamics will take place on the 3-dimensional subspace spanned by $u,q_2$ and $p_2$.  
The resulting equations of motion are precisely those of the 3-dimensional axisymmetric top with $\J=\diag[J_1,J_2,J_2]$, and the conclusions about the invariant tori are to be found in Theorem\,\ref{thm:reconstruction-axisymmetric}.

\noindent(v)    
Denote by $\widehat{S}_4$ the stratum in $T^*\Ss^3$ corresponding to the stratum $S_4$ in $\cR$. 
 It follows from Table~\ref{table:strata T^*S + R} that the $G_R=\OO(2)\times\OO(2)$ action on $\widehat{S}_4$ is free. 
 The conclusion about the dynamics on $T^*\Ss^3$ then follows from Field's theorem (Theorem\,\ref{thm:Field}) since
 $\OO(2)\times\OO(2)$ has rank 2. Similarly, if  $\widehat{\widehat{S}}_4$ denotes the corresponding stratum on  $D$,
 then the action of $G=G_L\times G_R=\OO(3)\times\OO(2)\times \OO(2)$ on $\widehat{\widehat{S}}_4$ is free 
 and the conclusion follows from the same theorem (since $G$ has rank 3).   
 The stability statement follows from the Lyapunov stability of the equilibria on $\cR$.
\end{proof}

Note that the above theorem only  gives upper bounds for the dimension of the invariant
 tori corresponding to the reconstruction of the solutions on the stratum $S_4$. We will now present some evidence to show that these bounds are 
 generically attained and the dynamics does not take place in lower
 dimensional sub-tori (compare with Remark~\ref{rmk:Subtori}). We begin with the following proposition that considers the reconstruction 
of the non-trivial relative equilibria in $S_4$.

\begin{proposition} Let $(h,P)$ with $h>0$ be a point in the  image of the energy-momentum map (see Figure\,\ref{fig:E-M for cylindrical}) 
and consider the corresponding relative equilibrium 
described by item (iii) of Theorem~\ref{thm:dynamics on R}. A reconstruction of this solution to $T^*\Ss^3$ and to $D$ is
respectively determined by:
\begin{equation*}
\begin{split}
q(t)=\left ( \sqrt{A_0}\cos \omega_1 t, - \sqrt{A_0}\sin \omega_1 t,  \sqrt{1-A_0}\cos \omega_2 t,  -\sqrt{1-A_0}\sin \omega_2 t \right )^T, \\
p(t)=\left ( -\sqrt{B_0}\sin \omega_1 t,  -\sqrt{B_0}\cos \omega_1 t,  - \sqrt{P-B_0}\sin \omega_2 t, - \sqrt{P-B_0}\cos \omega_2 t \right )^T,
\end{split}
\end{equation*}
and
\begin{equation*}
g(t)= \left[\begin{array}{cccc}
1&0 & 0 & 0\\
 0 &1& 0 & 0\\
0 & 0 & \cos \omega_3 t & -\sin \omega_3 t \\
0 & 0 &\sin \omega_3 t  & \cos \omega_3 t
\end{array}\right] \, g_0 \, \left[\begin{array}{cccc}
 \cos \omega_1 t &- \sin \omega_1 t & 0 & 0\\
 \sin \omega_1 t  & \cos \omega_1 t& 0 & 0\\
0 & 0 & \cos \omega_2 t & -\sin \omega_2 t \\
0 & 0 &\sin \omega_2 t  & \cos \omega_2 t
\end{array}\right],
\end{equation*}
where
\begin{equation*}
g_0=\left[\begin{array}{cccc}
\sqrt{A_0}&0 & \sqrt{1-A_0} & 0\\
 0 &\sqrt{B_0/P}& 0 & \sqrt{(P-B_0)/P} \\
\sqrt{1-A_0} & 0 & -\sqrt{A_0} & 0 \\
0 &  \sqrt{(P-B_0)/P} &0  & -\sqrt{B_0/P}
\end{array}\right].
\end{equation*}
The frequencies in the above formulae are given by
\begin{equation}
\label{eq:Reconstruction frequencies}
\omega_1=\frac{2\sqrt{2}h}{\sqrt{4J_1h+P}}, \qquad \omega_2=\frac{2\sqrt{2}h}{\sqrt{4J_2h+P}}, \qquad 
\omega_3=\frac{-4h\sqrt{P}}{\sqrt{4J_1h+P}\sqrt{4J_2h+P}}.
\end{equation}
\end{proposition}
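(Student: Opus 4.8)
The plan is to verify the two reconstructions directly: first that $(q(t),p(t))$ solves the reduced equations \eqref{eq:RedEqnsGeneral} and projects to the relative equilibrium $(A_0,B_0,P,0)$, and then that $g(t)$ is the horizontal lift of this reduced solution, hence a solution on $D$.

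For the reduced solution I would begin by confirming $(q(t),p(t))\in T^*\Ss^3$, i.e.\ $\|q\|^2=1$ and $q\cdot p=0$; both collapse to the Pythagorean identity inside each of the two rotating $2$-planes. Evaluating the invariants \eqref{eq:cylindrical invariants} gives the constants $A=A_0$, $B=B_0$, $P=P$ and, crucially, $D=p_1\cdot q_1=0$ (and likewise $p_2\cdot q_2=0$), so the curve projects to the equilibrium $(A_0,B_0,P,0)$ of Theorem~\ref{thm:dynamics on R}(iii). The vanishing of these cross terms gives $p\cdot Cq=\beta_1(p_1\cdot q_1)+\beta_2(p_2\cdot q_2)=0$, so the first equation in \eqref{eq:RedEqnsGeneral} collapses to $\dot q=Cp$; comparing the two block components forces $\omega_1=\beta_1\sqrt{B_0/A_0}$ and $\omega_2=\beta_2\sqrt{(P-B_0)/(1-A_0)}$, while $\dot p=-2Hq$ supplies the consistent relation $\beta_1B_0=2hA_0$. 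Substituting $A_0,B_0$ from \eqref{eq:equilibria} together with the resulting $\beta_1=4h/(P+4J_1h)$ and $\beta_2=4h/(P+4J_2h)$ turns these into the stated $\omega_1,\omega_2$, and the short computation $H=\tfrac12 p\cdot Cp=\tfrac12(\beta_1B_0+\beta_2(P-B_0))=h$ confirms the energy.

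For the lift to $D$, I would use that the reconstruction of a reduced solution is the horizontal lift $\dot g=g\Omega$ with $\Omega=q\wedge\dot q$ (Equation~\eqref{eq:Omqqdot}) and $g^{-1}e_1=q$; by the Chaplygin reduction $D\to T^*\Ss^{n-1}$ such a $g$ is automatically a solution of the full system (as already used in Proposition~\ref{prop:reconstruction 3D} and Theorems~\ref{thm:nD to 3D} and~\ref{thm:nD to 4D}). Writing $g=L\,g_0\,R$ with $L=\exp(t\omega_3E_{34})$ and $R=\exp(t(\omega_1E_{12}+\omega_2E_{34}))$, where $E_{ij}=e_i\wedge e_j$, I would first check $g(t)\in\OO(4)$ (the columns of $g_0$ are orthonormal and $L,R$ are plane rotations). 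Next, $R(t)q(t)=\hat q$ with $\hat q=(\sqrt{A_0},0,\sqrt{1-A_0},0)^T$ constant, $g_0\hat q=e_1$, and $L(t)e_1=e_1$, so $g(t)q(t)=e_1$ for all $t$, giving $g^{-1}e_1=q$. It then suffices to check the single condition $\dot g g^{-1}\in\dd$: once the constraint holds, $g^{-1}e_1=q$ forces $\dot g g^{-1}=e_1\wedge(g\dot q)$ and hence $g^{-1}\dot g=q\wedge\dot q=\Omega$, as required.

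The crux, and the main obstacle, is this last verification. From the factorisation one gets $\dot g g^{-1}=\omega_3E_{34}+L\bigl(\omega_1\,g_0E_{12}g_0^T+\omega_2\,g_0E_{34}g_0^T\bigr)L^{-1}$. Because the columns of $g_0$ are supported on the index sets $\{1,3\}$ and $\{2,4\}$, the bracket $M$ has nonzero entries only in the slots $(1,2),(1,4),(2,3),(3,4)$, and conjugation by the $(3,4)$-rotation $L$ fixes the $(1,2),(3,4)$ slots while rotating the pairs $\{(1,3),(1,4)\}$ and $\{(2,3),(2,4)\}$. The requirement that the entries $(2,3),(2,4),(3,4)$ vanish for all $t$ then splits into (a) the vanishing of the coefficient $m_{23}=-\omega_1\sqrt{(1-A_0)B_0/P}+\omega_2\sqrt{A_0(P-B_0)/P}$, and (b) $\omega_3=-m_{34}$ with $m_{34}=\omega_1\sqrt{(1-A_0)(P-B_0)/P}+\omega_2\sqrt{A_0B_0/P}$. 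Condition (a) is an identity: it reduces to $\beta_1B_0(1-A_0)=\beta_2(P-B_0)A_0$, and both sides equal $\tfrac{(P-4J_2h)(4J_1h-P)}{8(J_1-J_2)^2h}$. Condition (b), after inserting the values of $A_0,B_0,\omega_1,\omega_2$ and using $4J_1h<P<4J_2h$ on the interior stratum $S_4$ to fix the signs of $P-4J_1h$ and $4J_2h-P$, collapses through the identity $(P-4J_1h)(P+4J_2h)+(4J_2h-P)(P+4J_1h)=8hP(J_2-J_1)$ to precisely the stated value of $\omega_3$. The only delicate point beyond bookkeeping is this sign tracking, which is exactly where the hypothesis that $(h,P)$ is an interior point of the energy-momentum image is used.
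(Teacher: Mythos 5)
Your proposal is correct and follows the same route as the paper, which simply states that the result is a direct verification that $(q(t),p(t))$ solves the reduced equations and that $\Omega=g^{-1}\dot g$ satisfies the constraints and the Legendre relation. You actually carry out the computation the paper omits, and your reformulation of the check on $D$ (it suffices that $g(t)q(t)=e_1$ and $\dot g g^{-1}\in\dd$, by uniqueness of the horizontal lift) together with the verified identities for $\beta_1 B_0(1-A_0)=\beta_2(P-B_0)A_0$ and for $\omega_3$ is a clean and complete way to organize that verification.
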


The proof  for $T^*\Ss^3$ is a direct verification that $(q(t),p(t))$ is a solution of Equations~\eqref{prop:eqns-nD}. For
$D$, one may compute $\Omega=g^{-1}\dot g$ and verify
by a direct calculation  that the nonholonomic constraints and  relations such as~\eqref{eq:Legendre} 
hold. For generic $(h,P)$ the reconstruction frequencies  \eqref{eq:Reconstruction frequencies} are  non-resonant
so these relative equilibria indeed reconstruct to motions on  invariant 3-tori on $D$ and on an invariant 2-tori on $T^*\Ss^3$.

Now consider the motion along a relative periodic orbit on $\cR$ that is close to the relative
equilibrium treated above. Both the reconstructed solutions on $T^*\Ss^3$ and on $D$ are quasi-periodic.
They will posses the natural frequency given by Equation~\eqref{eq:omega} (that only depends on $h$)
 and reconstruction frequencies
that are close to those given in~\eqref{eq:Reconstruction frequencies}. The joint set of these frequencies
will generically be non-resonant and the motion will take place on a 4-torus on $D$ and on a 3-torus on $T^*\Ss^3$.

\section{Conclusions and open question}
\label{S:Question-Hamiltonisation}
Our study shows that the dynamics of the  axisymmetric and cylindrical Veselova tops is quasiperiodic in the natural time variable. 
More precisely, at the level of the first reduced space $D/G_L=T^*\Ss^{n-1}$ the flow possesses an invariant measure
and:
\begin{enumerate}
\item for the axisymmetric case  (we may take $n=3$), 
the generic motion is  quasi-periodic  on invariant $2$-tori on $T^*\Ss^2$ and there are two integrals of motion $H$ and $P$;
\item for the cylindrical case (we may take $n=4$), the generic motion is 
quasi-periodic on invariant $3$-tori on $T^*\Ss^3$ and there are three integrals of motion $H$, $P$ and $F$.
\end{enumerate}
These scenarios are remarkably similar to the situation found in Liouville-Arnold integrable Hamiltonian systems.
We leave it as an open problem to find  a Hamiltonian structure for these systems on these cotangent bundles
(one that does not involve a time reparametrisation) or to identify an obstruction to its existence.
The interest in this  question arises in view of the great quantity of research that in  recent years has been devoted to identifying the differences between nonholonomic and Hamiltonian 
systems.

\appendix

\section{Field's theorem on reconstruction}
\label{app:Field}

We briefly recall the language and main properties of (compact) group actions and equivariant vector fields, needed for the proofs of quasiperiodicity. 

Suppose a (compact Lie) group $G$ acts smoothly on a manifold $M$. If two points  lie in the same orbit, their isotropy subgroups  are conjugate. This motivates the partition or stratification of $M$ into ``orbit types,'' where the orbit type stratum $M_{(H)}$ consists of those points with isotropy subgroup (ISG) conjugate to $H$, or more precisely a stratum is a connected component of such points. 
Moreover, if $M_H$ is (a connected component of) the set of points with ISG equal to $H$, then, up to connected components, $M_{(H)} = G.M_H$ (the image of $M_H$ under the group action).  The orbit space $M/G$ is similarly stratified by the orbit type; one denotes its corresponding stratum by $(M/G)_H$.  All of these spaces $M_{(H)}, M_H$ and $(M/G)_H$ are manifolds.

If in addition there is an equivariant dynamical system on $M$, then it descends to a dynamical system on $M/G$ which respects each stratum. The question of reconstruction is, if one knows some properties of the dynamics on $M/G$ what does this imply about the dynamics on $M$. We use the following simplified version of an important theorem due to Field \cite{Field80} (see also \cite[Chapter 8]{Field-book} or  \cite{CDS-book}).

\begin{theorem}[Field \cite{Field80}] \label{thm:Field}
Consider a smooth equivariant dynamical system, with symmetry group $G$ acting freely on the phase space $M$. The dynamics passes down to the smooth orbit space $M/G$.  
\begin{enumerate}
\item  Let $x\in M/G$ be an equilibrium point of the reduced equations.
 Then the inverse image of $x$ in $M$ is a group orbit which is foliated by invariant tori of dimension at most the rank of $G$.
\item Let $\gamma$ be a periodic orbit of the reduced dynamics in $M/G$. Then the inverse image of this curve is also foliated by invariant tori, but now of dimension at most $\mathrm{rk}(G)+1$.
\end{enumerate}
In both cases the dynamics in the invariant tori is quasiperiodic.
\end{theorem}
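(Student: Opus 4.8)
The plan is to exploit freeness to transport the flow on each fiber into the group $G$ itself, reducing everything to the structure of closures of one-parameter subgroups and of cyclic subgroups in a compact Lie group. Throughout I write $\pi\colon M\to M/G$ for the projection and $\Phi_t$ for the flow on $M$; since the action is free and $G$ is compact it is proper, so $M/G$ is a smooth manifold, $\pi$ is a principal $G$-bundle, and the reduced flow $\bar\Phi_t$ is well defined.

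\emph{Part (i).} I would fix $m_0\in\pi^{-1}(x)$. Because $x$ is an equilibrium, the fiber $\pi^{-1}(x)=G\cdot m_0$ is invariant under $\Phi_t$, so $\Phi_t(m_0)$ stays in the orbit and, by freeness, there is a unique $g(t)\in G$ with $\Phi_t(m_0)=g(t)\cdot m_0$. The key step is to show that $t\mapsto g(t)$ is a smooth homomorphism $\R\to G$: using $G$-equivariance of $\Phi_t$ one computes $\Phi_{t+s}(m_0)=\Phi_t(g(s)\cdot m_0)=g(s)\cdot\Phi_t(m_0)=g(s)g(t)\cdot m_0$, and freeness gives $g(t+s)=g(s)g(t)$. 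Hence $g(t)=\exp(t\xi)$ for some $\xi\in\g$, its closure $\overline{\{g(t)\}}$ is a torus $\T^k\subseteq G$ of dimension $k\le\mathrm{rk}(G)$ (the closure of a one-parameter subgroup lies in a maximal torus), and since the action is free the orbit closure $\overline{\Phi_\R(m_0)}=\T^k\cdot m_0$ is an embedded $k$-torus on which the flow is the translation flow, hence quasiperiodic. Equivariance then propagates this over the whole fiber: the flow through $h\cdot m_0$ is $h\exp(t\xi)\cdot m_0$, so $G\cdot m_0$ is foliated by the $G$-translates of $\T^k\cdot m_0$, all $k$-tori.

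\emph{Part (ii).} I would fix $m_0$ over $\gamma(0)$ and let $\tau$ be the period of $\gamma$. Then $\Phi_\tau(m_0)$ lies over $\gamma(0)$, so by freeness $\Phi_\tau(m_0)=g_0\cdot m_0$ for a unique \emph{holonomy} element $g_0\in G$, and equivariance gives $\Phi_{n\tau}(m_0)=g_0^{\,n}\cdot m_0$. The next step is to analyse the closure $A:=\overline{\langle g_0\rangle}$, a compact abelian (monothetic) subgroup whose identity component is a torus $\T^d$ with $d\le\mathrm{rk}(G)$ and with $A/\T^d$ finite cyclic. Presenting $\pi^{-1}(\gamma)$ as the mapping torus of the right translation $g\mapsto gg_0$ on $G$ (with the flow advancing the $\R/\tau\Z$-coordinate), the closure of the trajectory through $m_0$ becomes the suspension of $A$ by $g_0$, a compact connected abelian group, hence a torus of dimension $d+1\le\mathrm{rk}(G)+1$; the flow on it is again linear, so quasiperiodic, and the $G$-translates foliate $\pi^{-1}(\gamma)$ as before.

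I expect the main obstacle to be Part (ii): one must show rigorously that the trajectory closure is an \emph{embedded} torus carrying a \emph{linear} flow, which requires the structure theory of monothetic compact abelian groups (the closure of $\langle g_0\rangle$ is $\T^d$ up to a finite cyclic factor) together with care in the suspension construction to identify the resulting flow as a linear winding flow. Freeness and properness of the action are precisely what guarantee that the orbit maps are embeddings rather than merely immersions. By contrast, Part (i) is comparatively routine once the homomorphism property of $g(t)$ has been established.
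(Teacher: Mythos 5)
The paper does not prove this theorem: it is imported verbatim (in simplified form) from Field's work, so there is no in-paper argument to compare against. Judged on its own, your proposal is the standard reconstruction argument and is essentially correct. Part (i) is fine: freeness plus compactness makes the orbit map a diffeomorphism onto the embedded orbit, so $t\mapsto g(t)$ is a continuous (hence smooth) one-parameter subgroup, its closure is a torus $\T^k$ with $k\le\mathrm{rk}(G)$, the flow on $\T^k\cdot m_0$ is translation by $\exp(t\xi)$ (quasiperiodic because $\T^k$ is abelian and contains $\exp(t\xi)$), and the left cosets $h\T^k\cdot m_0$ foliate the fiber. Part (ii) is also the right argument, but two points in your write-up should be tightened. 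First, the suspension of $A=\overline{\langle g_0\rangle}$ by right translation by $g_0$ is \emph{not} a group, so you cannot conclude ``compact connected abelian group, hence a torus'' in one breath; the correct route is to note that $A\cong\T^d\times\Z_m$ with $g_0$ projecting to a generator of $\Z_m$, so the mapping torus $(A\times[0,\tau])/\bigl((a,\tau)\sim(ag_0,0)\bigr)$ is a principal $\T^d$-bundle over the circle $\R/m\tau\Z$ with structure group the translations of $\T^d$, hence trivial, hence diffeomorphic to $\T^{d+1}$, and in these coordinates the flow $(a,s)\mapsto(a,s+t)$ becomes linear. Second, you must take $\tau$ to be the \emph{minimal} period of $\gamma$ so that $s\mapsto\gamma(s)$ is injective on $[0,\tau)$; combined with freeness this is what makes the parametrization $(a,s)\mapsto a\cdot\Phi_s(m_0)$ injective and hence the closure an embedded torus rather than just the continuous image of one. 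With those repairs the argument is complete; it also cleanly explains why only $\mathrm{rk}(G)$ and $\mathrm{rk}(G)+1$ appear as the bounds, which is exactly how the theorem is used in Theorems~\ref{thm:reconstruction-axisymmetric} and~\ref{thm:reconstruction-cylindrical}.
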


In case (i) the dynamics on $M$ is called a \emph{relative equilibrium}, while in (ii) it is a a \emph{relative periodic orbit}. 
If the dynamics on $M/G$ include an invariant quasiperiodic torus, then it is unknown except in special cases, see \cite{FGNG},  what the corresponding reconstructed dynamics may be.  The more general version of Field's theorem does not require the action to be free, but this suffices for our purposes.

\section{Physical  inertia tensors satisfying the hypothesis of Fedorov-Jovanovi\'c}
\label{A:Inertia}

Recall that  Fedorov and Jovanovi\'c \cite{FedJov,FedJov2} work under the assumption that there is an $n\times n$ diagonal 
matrix $A$ such that the inertia tensor $\I:\so(n)\to\so(n)$ satisfies 
\begin{equation}\label{eq:inertia-tensor}
\I(a\wedge b) = (Aa)\wedge(Ab),\quad (\forall a,b\in\R^n).
\end{equation}
In this appendix we
examine the feasibility of this condition within the family of physical inertia tensors. The following proposition, that was already  suggested in \cite{FedJov2, Jovan}, shows that for 
  $n\geq 4$ a physical inertia tensor that satisfies  \eqref{eq:inertia-tensor} necessarily corresponds to an axisymmetric rigid body.
  \begin{proposition}
\label{P:phys-inertia}
Let $\I$ be a physical inertia tensor defined by $\I(\Omega)=\J\Omega+\Omega\J$ (see \eqref{eq:Phys-Inertia-Intro}).
\begin{enumerate}
\item If $n=3$ then the identity \eqref{eq:inertia-tensor} holds for arbitrary $a, b\in \R^3$ with 
$$A_i=\sqrt{\frac{(J_i+J_j)(J_i+J_k)}{J_j+J_k}},$$
for $\{i,j,k\}=\{1,2,3\}$, where $\mathbb{J}=\diag[J_1,J_2,J_3]$ (with $J_i+J_j>0$ for $i\neq j$).
\item If $n\geq 4$, there exists a diagonal matrix $A$ satisfying  \eqref{eq:inertia-tensor} if and only if\/ $\I$ is the inertia tensor of an axisymmetric body.
In this case, the body frame can be chosen in such way that $\J$ is given by  \begin{equation}
\label{eq:Lagrange-J}
\J=\mbox{diag}\left[A_1A_2-\tfrac12 A_2^2,\tfrac12A_2^2,\dots, \tfrac12 A_2^2\right]
\end{equation}
and $A=\diag[A_1, A_2, \dots, A_2]$,  where $A_1,A_2\in \R$ satisfy $A_1\geq A_2/2>0$.
\end{enumerate}
\end{proposition}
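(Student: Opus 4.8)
The plan is to turn the functional identity \eqref{eq:inertia-tensor} into a finite system of scalar equations and then decide when that system is solvable. Both sides of \eqref{eq:inertia-tensor} are bilinear and antisymmetric in $(a,b)$, so the identity holds for all $a,b$ if and only if it holds on the basis pairs $(e_i,e_j)$ with $i<j$. On the left, $e_i\wedge e_j$ spans a principal plane, so (as recalled in Section~\ref{sec:rigid body}) $\I(e_i\wedge e_j)=(J_i+J_j)\,e_i\wedge e_j$; on the right, $(Ae_i)\wedge(Ae_j)=A_iA_j\,e_i\wedge e_j$. Since $e_i\wedge e_j\neq0$, I would record the equivalence
\begin{equation}
\I(a\wedge b)=(Aa)\wedge(Ab)\ \ (\forall\, a,b)\qquad\Longleftrightarrow\qquad A_iA_j=J_i+J_j\ \ (\forall\, i\neq j),
\end{equation}
with $A=\diag[A_1,\dots,A_n]$. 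Everything else is an analysis of this quadratic system; I note that replacing $A$ by $-A$ leaves \eqref{eq:inertia-tensor} unchanged, so the signs of the $A_i$ may be fixed at will.

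For part (i) the system is just $A_1A_2=J_1+J_2$, $A_1A_3=J_1+J_3$, $A_2A_3=J_2+J_3$. Multiplying two of these and dividing by the third isolates $A_i^2=(J_i+J_j)(J_i+J_k)/(J_j+J_k)$; since $J_i+J_j>0$ I may take the positive root, and a one-line check confirms the resulting triple satisfies all three equations, yielding the stated formula.

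The substance is part (ii), and the crux is the ``only if'' direction. Assuming a solution exists for $n\ge4$, I would take four distinct indices $i,j,k,l$ and compute the product $A_iA_jA_kA_l$ in the three ways allowed by the system, getting $(J_i+J_j)(J_k+J_l)=(J_i+J_k)(J_j+J_l)=(J_i+J_l)(J_j+J_k)$. Expanding pairs of these equalities gives the factorised relations $(J_i-J_l)(J_j-J_k)=0$ and $(J_i-J_j)(J_k-J_l)=0$ for every four distinct indices. The key step is to read this as a statement about the graph on $\{1,\dots,n\}$ that joins $i$ to $j$ precisely when $J_i\neq J_j$: the relations say this graph has no two disjoint edges. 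This graph is complete multipartite, its parts being the eigenvalue level sets of $\J$, and a complete multipartite graph on $n\ge4$ vertices with matching number at most $1$ must be the star $K_{1,n-1}$. I expect this combinatorial step---simultaneously excluding three or more distinct eigenvalues and two eigenvalues each of multiplicity at least two---to be the main thing to write carefully; readers preferring to avoid graph language can instead run the direct case analysis on representatives of the level sets. Either way the conclusion is that $\J$ has one simple eigenvalue and one eigenvalue of multiplicity $n-1$, so after reordering the principal frame $\J=\diag[J_1,J_2,\dots,J_2]$, i.e.\ the body is axisymmetric.

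For the converse together with the explicit normal form, I would substitute the axisymmetric $\J=\diag[J_1,J_2,\dots,J_2]$ back into the system. For indices $i,j\ge2$ it reads $A_iA_j=2J_2$, and since there are at least three such indices (because $n\ge4$) these equations force $A_2=\dots=A_n$ and $A_j^2=2J_2$; fixing the sign gives $A_j=\sqrt{2J_2}$ for $j\ge2$. The remaining equations $A_1A_j=J_1+J_2$ then determine $A_1=(J_1+J_2)/A_2$. Writing $J_2=\tfrac12A_2^2$ and $J_1=A_1A_2-\tfrac12A_2^2$ reproduces \eqref{eq:Lagrange-J} with $A=\diag[A_1,A_2,\dots,A_2]$. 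Finally I would translate physicality into the stated inequality: with the sign chosen so that $A_2>0$, positivity of $J_2$ is automatic, while $J_1\ge0$ is equivalent to $A_1A_2\ge\tfrac12A_2^2$, that is $A_1\ge A_2/2$, giving $A_1\ge A_2/2>0$.
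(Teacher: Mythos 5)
Your proof is correct, and its skeleton coincides with the paper's: both reduce the identity \eqref{eq:inertia-tensor} to the quadratic system $A_iA_j=J_i+J_j$ for $i\neq j$ (you justify this reduction via bilinearity and the eigenvector property of principal planes, which the paper leaves implicit), solve it explicitly for $n=3$, and for $n\geq4$ extract a ``product of differences vanishes'' relation from four distinct indices. The difference lies in which relation is extracted and how. The paper combines the equations \emph{linearly} ($E_{ij}+E_{ik}-E_{jk}$ computed in two ways) to get $(A_i-A_j)(A_k-A_\ell)=0$, a dichotomy on the $A$'s, and then chases indices to show $n-1$ of the $A_i$ coincide before transferring the conclusion to $\J$. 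You instead combine them \emph{multiplicatively} (three factorisations of $A_iA_jA_kA_\ell$) to get $(J_i-J_j)(J_k-J_\ell)=0$ directly on the $J$'s, and then close with the observation that the ``inequality graph'' of $\J$ is complete multipartite with matching number at most one. Your route has the advantage of working with the physically meaningful quantities throughout and of packaging the case analysis into a single clean combinatorial statement; the paper's is marginally more elementary. Two small points to tidy: replacing $A$ by $-A$ only permits flipping the \emph{overall} sign, not the signs of the $A_i$ individually (your actual sign choices are fine, since you verify them against the system); and the complete multipartite graph with matching number at most one could also be edgeless, i.e.\ $\J$ a multiple of the identity --- a degenerate axisymmetric body that your normal form still covers with $J_1=J_2$, so the conclusion stands, but the phrase ``must be the star $K_{1,n-1}$'' should allow for this case. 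Neither affects the validity of the argument.
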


\begin{proof}
(i) is a calculation. 

(ii) ``Only if'': by selecting  the body frame with $f_1$  parallel to  the symmetry axis of the body we have $\mathbb{J}=(J_1,J_2,\dots,J_2)$ with $J_1\geq 0$ and $J_2>0$. 
The statement follows by putting $A_1=(J_1+J_2)/\sqrt{2J_2}$ and $A_2=\sqrt{2J_2}$

``If": suppose that  \eqref{eq:physical inertia} holds and $\I$ is a physical inertia tensor with corresponding diagonal matrix $\J$.
 Then there are real numbers $A_1,\dots, A_n$ such that for each $i\neq j$, 
$$ J_i+J_j=A_iA_j.\eqno(E_{ij})$$
We claim that if the $J_i$ satisfy these conditions, then at least $n-1$ of the $A_i$ coincide. In that case suppose (w.l.o.g.) $A_2=\cdots=A_n$. Then comparing the equations $E_{1i}$  shows that all $J_1+J_i$ coincide for $i>1$, and hence $J_2=\cdots=J_n$ as required. 

To prove the claim, note that for any collection $\{i,j,k,\ell\}$ of 4 distinct indices,
$$E_{ij}+E_{ik}-E_{jk} \text{ becomes } 2J_i = A_iA_j+A_iA_k-A_jA_k,$$
and similarly
$$E_{ij}+E_{i\ell}-E_{j\ell} \text{ becomes }  2J_i = A_iA_j+A_iA_\ell-A_jA_\ell.$$
Subtracting these shows
$$(A_i-A_j)(A_k-A_\ell)=0.$$
Suppose $J_1\neq J_2$ and apply this to the indices $1,2,i,j$ with $i,j>2$. It follows that $A_3=A_4=\dots= A_n$. Finally, if $A_1=A_3$ we are done, while if $A_1\neq A_3$ apply the previous reasoning to the indices $1,3,2,4$ and one can conclude that $A_2=A_4$, and hence indeed $A_2=\cdots=A_n$. 
\end{proof}

The usefulness of a condition of type \eqref{eq:inertia-tensor} for the developments in  \cite{FedJov,FedJov2} (also \cite{Jovan,JovaRubber}) seems to arise from the fact that an inertia tensor
satisfying  \eqref{eq:inertia-tensor} maps the set of rank 2 matrices in $\so(n)$ into itself.  For interest, although we make no  use of this, we show that for $n\geq4$, the only physical inertia tensors with this property are axisymmetric. 

\begin{proposition} 
Let $\I$ be a physical inertia tensor defined by \eqref{eq:Phys-Inertia-Intro}. If $n\geq 4$, then $\I$ maps the set of rank two matrices in $\so(n)$ into itself if and only if the body is axisymmetric. 
\end{proposition}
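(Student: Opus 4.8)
The plan is to translate the rank condition into a purely algebraic identity on exterior powers. Identifying $\so(n)$ with $\Lambda^2\R^n$ via $a\wedge b=ab^T-ba^T$, a direct expansion of $\I(a\wedge b)=\J(a\wedge b)+(a\wedge b)\J$ gives the derivation-type formula
\begin{equation*}
\I(a\wedge b) = (\J a)\wedge b + a\wedge(\J b).
\end{equation*}
Since $\I$ is invertible, $\I(a\wedge b)\neq0$ whenever $a,b$ are independent, so its rank is always at least $2$; being skew, it has rank exactly $2$ if and only if it is decomposable, which for a $2$-vector $\omega$ is equivalent to $\omega\wedge\omega=0$ in $\Lambda^4\R^n$. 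First I would compute this wedge square: writing $X=(\J a)\wedge b$ and $Y=a\wedge(\J b)$, both decomposable, we get $\I(a\wedge b)\wedge\I(a\wedge b)=2\,X\wedge Y$, and reordering the factors yields
\begin{equation*}
\I(a\wedge b)\wedge\I(a\wedge b) = -2\, a\wedge b\wedge(\J a)\wedge(\J b).
\end{equation*}
Hence $\I$ maps rank-two matrices to rank-two matrices if and only if $a\wedge b\wedge(\J a)\wedge(\J b)=0$ for all $a,b\in\R^n$.

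Next I would extract the constraint on the eigenvalues. Working in a principal basis with $\J f_i=J_i f_i$, testing on eigenvectors gives nothing, so the crucial test vectors are $a=f_i+f_k$ and $b=f_j+f_\ell$ with $i,j,k,\ell$ distinct (this is where $n\geq4$ enters). Expanding $a\wedge b\wedge(\J a)\wedge(\J b)$ and collecting the coefficient of the single available $4$-form $f_i\wedge f_j\wedge f_k\wedge f_\ell$, a short computation factors as $(J_k-J_i)(J_\ell-J_j)$. Thus the mapping property forces
\begin{equation*}
(J_i-J_k)(J_j-J_\ell)=0 \qquad\text{for all distinct } i,j,k,\ell.
\end{equation*}
This is exactly the combinatorial condition already treated in the proof of Proposition~\ref{P:phys-inertia}: if $\J$ is not scalar, say $J_1\neq J_2$, fixing the pair $(i,k)=(1,2)$ forces $J_j=J_\ell$ for all $j,\ell\geq3$, so $J_3=\cdots=J_n=:J$; then applying the identity to $(1,3,2,4)$ gives $(J-J_1)(J-J_2)=0$, so one of $J_1,J_2$ equals $J$ and at least $n-1$ of the eigenvalues coincide. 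This is precisely the axisymmetric condition \eqref{eq:axisymmetric J}.

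For the converse, if the body is axisymmetric then $\J a\in\mathrm{span}\{a,f_1\}$ and $\J b\in\mathrm{span}\{b,f_1\}$ for every $a,b$, so $a,b,\J a,\J b$ all lie in the at-most-three-dimensional space $\mathrm{span}\{a,b,f_1\}$ and their fourfold wedge vanishes; equivalently one may invoke Proposition~\ref{P:phys-inertia}(ii) together with the fact that an inertia tensor of the form \eqref{eq:inertia-tensor} sends $a\wedge b$ to $(Aa)\wedge(Ab)$, which is manifestly of rank two since $A$ is invertible. I expect the only delicate point to be the sign bookkeeping in the $\Lambda^4$ computation of the wedge square and the coefficient $(J_k-J_i)(J_\ell-J_j)$; once that identity is correct, the remaining argument is the same elementary case analysis used in Proposition~\ref{P:phys-inertia}.
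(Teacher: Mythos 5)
Your proof is correct, and it reaches the paper's conclusion by a more structural route than the one in the text. The paper argues by contradiction: after reordering the principal basis so that $J_1\neq J_3$ and $J_2\neq J_4$ (always possible when the body is not axisymmetric), it evaluates $\det\bigl(\I((f_1+f_3)\wedge (f_2+f_4))\bigr)=(J_1-J_3)^2(J_2-J_4)^2\neq 0$ to conclude the image has rank $4$, dispatches $n>4$ with ``a similar argument'', and gets the converse from Proposition~\ref{P:phys-inertia}(ii). You instead encode the rank condition as decomposability, $\omega\wedge\omega=0$ in $\Lambda^4\R^n$, which turns the hypothesis into the clean identity $a\wedge b\wedge(\J a)\wedge(\J b)=0$ for all $a,b$. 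The two computations are at heart the same: for a $4\times4$ skew matrix the determinant is the square of the Pfaffian, and your coefficient $(J_k-J_i)(J_\ell-J_j)$ is precisely the Pfaffian of the paper's test matrix, so the paper's $(J_1-J_3)^2(J_2-J_4)^2$ is its square. What your framing buys is a uniform treatment of all $n\geq4$ with no appeal to ``similar arguments''; the full family of necessary conditions $(J_i-J_k)(J_j-J_\ell)=0$ for distinct quadruples, which you correctly reduce by the same case analysis as in Proposition~\ref{P:phys-inertia} to the statement that $n-1$ eigenvalues coincide; and a self-contained converse from the observation that for an axisymmetric body $a,b,\J a,\J b$ all lie in the at-most-three-dimensional space spanned by $a$, $b$ and the symmetry axis. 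I checked the sign bookkeeping you flagged: $(X+Y)\wedge(X+Y)=2\,X\wedge Y=-2\,a\wedge b\wedge(\J a)\wedge(\J b)$, and the coefficient of $f_i\wedge f_j\wedge f_k\wedge f_\ell$ in the test expansion is $J_iJ_j-J_jJ_k-J_iJ_\ell+J_kJ_\ell=(J_k-J_i)(J_\ell-J_j)$, as you claim; note also that your final step uses the identity with the index pairs $(1,3)$ and $(2,4)$, i.e.\ $(J_1-J_3)(J_2-J_4)=0$, exactly the paper's choice of test vectors.
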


\begin{proof}
Let us consider the case $n=4$. Suppose $\I$ maps the space of rank two matrices in $\so(4)$ into itself and that the body is not axisymmetric. 
Choose the body frame so 
that $\J=\diag[J_1, \dots, J_4]$ with $J_1\neq J_3$, $J_2\neq J_4$. 
A direct calculation shows that
\begin{equation*}
\det(\I((f_1+f_3)\wedge (f_2+f_4)))=(J_1-J_3)^2(J_2-J_4)^2\neq 0,
\end{equation*}
which shows $\I((f_1+f_3)\wedge (f_2+f_4))$ has rank 4, reaching a contradiction. A similar argument shows this implication for $n>4$. 

The converse statement follows from part (ii) of the Proposition\,\ref{P:phys-inertia}.
\end{proof}

\noindent \textbf{Acknowledgements:} This research was made possible by a Newton Advanced Fellowship from the Royal Society, ref: NA140017.  LGN and JM are grateful to the hospitality of the  Department of Mathematics Tullio Levi-Civita of the Univeristy of Padova, during its 2018 intensive period ``Hamiltonian Systems''. 
LGN acknowledges the Alexander Von Humboldt Foundation for a Georg Forster Research Fellowship that funded a  visit to TU Berlin where the last part
of this work was completed.

The authors are thankful to Bo\v{z}idar Jovanovi\'c for comments on an early draft of this paper and for sharing the recent preprint \cite{Gajic} with us.

\vskip 1cm

FF:  Dipartimento di Matematica `Tullio Levi Civita', Universit\`a di Padova, Italy.  fasso@math.unipd.it

LGN: IIMAS, UNAM, Mexico City, Mexico. luis@mym.iimas.unam.mx
 
JM: School of Mathematics, University of Manchester, UK.  j.montaldi@manchester.ac.uk

\end{document}